\documentclass[11pt,a4paper,reqno]{amsart}
\usepackage{latexsym}
\usepackage{bbm,amssymb,amsthm,amsmath}
\usepackage{graphicx}
\usepackage{multirow}
\usepackage{caption}
\usepackage{bm}
\usepackage{multirow}
\usepackage{mathtools}  %%  for long arrows
\usepackage{float}  
\usepackage[pagewise]{lineno}%\linenumbers  %   for line numbers

\usepackage{xcolor}
\usepackage[backref=page]{hyperref}
\definecolor{darkgreen}{rgb}{0.5,0.25,0}
\definecolor{darkblue}{rgb}{0,0,1}
\definecolor{answerblue}{rgb}{0,0,0.75}
\hypersetup{colorlinks,breaklinks,
linkcolor=darkblue,urlcolor=darkblue,
anchorcolor=darkblue,citecolor=darkblue}

\usepackage{enumitem}

\calclayout

\theoremstyle{plain}
\newtheorem{theorem}{Theorem}[section]	% remove [section] if you do not want two level numbering for theorems, lemmas,...
\newtheorem{lemma}[theorem]{Lemma}
\newtheorem{corollary}[theorem]{Corollary}
\newtheorem{proposition}[theorem]{Proposition}

\theoremstyle{definition}
\newtheorem{definition}[theorem]{Definition}
\newtheorem{example}[theorem]{Example}

\theoremstyle{remark}
\newtheorem{remark}[theorem]{Remark}

\numberwithin{equation}{section}	% remove if you do not want two level numbering for equations

\def\C{\mathbb{C}}
\def\R{\mathbb{R}}

\def\I{\mathbb{I}}
\def\J{\mathbb{J}}

\let\mib=\boldsymbol

\def\mnu{{\mib \nu}}

\def\mxi{{\mib \xi}}
\def\meta{{\mib \eta}}

\def\mp{\mathbf{p}}
\def\mq{\mathbf{q}}

\def\vf{\mathsf{f}}
\def\vu{\mathsf{u}}
\def\vv{\mathsf{v}}

\def\ve{\mathsf{e}}

\def\vw{\mathsf{w}}

\def\mA{\mathbf{A}}

\def\mB{\mathbf{B}}

\def\mT{\mathbf{T}}

\def\lD{\mathcal{D}}
\def\lH{\mathcal{H}}
\def\lV{\mathcal{V}}
\def\lW{\mathcal{W}}
\def\lX{\mathcal{X}}
\def\lY{\mathcal{Y}}

\def\lL{\mathcal{L}}

\def\lK{\mathcal{K}}
\def\lM{\mathcal{M}}

\def\phi{\mathsf{\varphi}}

\def\dom{\operatorname{dom}}

\def\ker{\operatorname{ker}}
\def\ran{\operatorname{ran}}

\def\gr{\operatorname{gr}}

\newcommand{\mul}{\operatorname{mul}}

\def\dup#1#2#3#4{{}_{#1\!}\langle\, #2 , #3 \,\rangle_{#4}} % dual product
\def\scp#1#2{\langle\, #1 \mid #2 \,\rangle}  % scalar product
\def\iscp#1#2{[\, #1 \mid #2 \,]}  % indefinite scalar product

\begin{document}
%\today % It is good to have the date for easier later reference. We remove this in the final form.

\title[{Boundary quadruples for Friedrichs systems}]{Boundary quadruples and bijective realisations of abstract Friedrichs operators}

%\author{M.~Erceg}\address{Marko Erceg,
%	Department of Mathematics, Faculty of Science, University of Zagreb, Bijeni\v{c}ka cesta 30,
%	10000 Zagreb, Croatia}\email{maerceg@math.hr}

\author{S.~K.~Soni}\address{Sandeep Kumar Soni, Institute for Applied Mathematics, 8010 Graz, Steyrergasse 30/III, Austria}\email{soni@tugraz.at}

\subjclass[2020]{Primary 47A20; Secondary 47A06, 35F45, 46C20, 47B44}

%   47A20   Dilations, extensions and compressions of linear operators
%   47A06   Linear relations
%   35F45   Boundary value problems for systems of linear first-order PDEs
%   46C20   Spaces with indefinite inner product
%           (Kre\u{\i}n spaces, Pontryagin spaces, etc.)
%   47B44   Linear accretive and dissipative operators

\keywords{abstract Friedrichs operators, boundary quadruples,
boundary triples, linear relations, bijective realisations,
$m$-accretive realisations, Kre\u{\i}n spaces,
intrinsic boundary conditions}

\begin{abstract}
The theory of boundary quadruples and boundary triples is well-studied for symmetric and skew-symmetric operators and in general for dual-pairs. This paper adapts a suitable version for abstract Friedrichs operators and addresses the following questions: which parameters yield bijective realisations, and which parameters yield $m-$accretive realisations. We study a boundary-quadruple framework in which closed realisations are parametrised by closed relations in a boundary space. This yields the intrinsic criterion
\[T_\Theta\ \rm{is} \ \rm{bijective}\iff \lK=\Theta\dotplus \Gamma(\ker T_1) \;.\]
For bounded operator parameters $\phi:\lK_1\to \lK_0$ in the boundary space, we introduce the reference operator
\[Q_0=\Gamma_1(\Gamma_0|_{\ker T_1})^{-1}\,,\]
prove that $\|Q_0\|< 1$, and obtain the exact criterion
\[T_\phi \ \rm{is} \ \rm{bijective} \iff \I_{\lK_0}-\phi Q_0\ \rm{is} \ \rm{bijective}\;.\]
Consequently, every non-expansive parameter gives a bijective realisation with signed boundary map, which is also $m-$accretive.

An existence criterion for boundary quadruples and boundary triples is established in terms of (V)-boundary conditions. The multiplicity of $M-$operators associated with a fixed (V)-boundary condition is addressed in an explicit way and a parametrisation of such operators is given. The theory is illustrated by a first-order ordinary differential operator and by the stationary diffusion equation, where $Q_0$ is identified as a Cayley transform of the Dirichlet-to-Neumann operator.
\end{abstract}

\maketitle

%\tableofcontents	%table of content only for long papers

\section{Introduction}\label{intro}
The theory of Friedrichs systems provides a unified framework for first-order formulations of elliptic, parabolic and hyperbolic problems; see the classical work of Friedrichs~\cite{KOF}, the abstract Hilbert-space formulation in~\cite{EGC,AEM-2017}, and a nice historical exposition in~\cite{MJensen}. In the abstract setting one starts with a joint pair of minimal operators $(T_0,\widetilde T_0)$ and studies closed restrictions of the maximal operators $T_1$ and $\widetilde T_1$. The choice of a realisation is encoded by a subspace of the maximal space (the graph space). Intrinsic descriptions of such subspaces through (V)-, (X)- and (M)-boundary conditions were developed in~\cite{EGC,ABmn,ABcpde}; the corresponding classical formulations go back to~\cite{KOF,FL,PS}.

A von Neumann-type classification for abstract Friedrichs operators was obtained in~\cite{ES23}. Determined by the von Neumann-type decomposition proved in~\cite{ES22},
\[
    \lW=\lW_0\dotplus\ker T_1\dotplus\ker\widetilde T_1,
\]
the bijective realisations are parametrised by bounded operators from $\ker\widetilde T_1$ to $\ker T_1$. Natural boundary traces in differential equations, however, generally provide a different setting. In such a setting boundedness of the boundary parameter alone need not imply bijectivity. This leads to the first question addressed here:

\medskip
\noindent\emph{Given a boundary quadruple, which boundary relations and bounded operator parameters yield bijective realisations?}
\medskip

A second question concerns the intrinsic boundary conditions:

\medskip
\noindent\emph{Given a \emph{(V)}-boundary condition, how can one parametrise the corresponding {(M)}-operators in the same boundary setting in a canonical way?}
\medskip

Boundary quadruples were introduced for skew-symmetric operators in~\cite{Arendt2023}, and boundary-system and boundary-triple approaches are discussed in~\cite{SSVW15,WW18,WW20,Wegner2017}. Since a pair of abstract Friedrichs operators is equivalent to the sum of a bounded strictly positive self-adjoint operator and a skew-symmetric operator~\cite{ES23}, the same boundary form controls the extension theory. We therefore work with a boundary quadruple $(\lK_1,\lK_0,\Gamma_1,\Gamma_0)$ on the maximal space $\lW$. The boundary map
\[
    \Gamma=(\Gamma_1,\Gamma_0)^\top:\lW\to\lK_1\oplus\lK_0
\]
induces a topological isomorphism from $\lW/\lW_0$ onto the Kre\u{\i}n space $\lK_1\oplus\lK_0$.

The relation-theoretic starting point is the realisation
\[
    \lV_\Theta:=\Gamma^{-1}(\Theta),
    \qquad
    T_\Theta:=T_1|_{\lV_\Theta},
\]
where $\Theta$ is a closed linear relation in $\lK_1\oplus\lK_0$. We prove the intrinsic criterion
\[
    T_\Theta\text{ is bijective}
    \quad\Longleftrightarrow\quad
    \lK_1\oplus\lK_0
    =\Theta\dotplus\Gamma(\ker T_1).
\]
This criterion is invariant under $\J$-unitary changes of boundary coordinates.

The main result is the operator form of this condition. The zero boundary parameter gives the reference realisation
\[
    T_{\rm ref}:=T_1|_{\ker\Gamma_0},
\]
which is always bijective. Consequently,
\[
    B_0:=\Gamma_0|_{\ker T_1}:\ker T_1\to\lK_0
\]
is a topological isomorphism. We introduce
\[
    Q_0:=\Gamma_1B_0^{-1}:\lK_0\to\lK_1.
\]
For a homogeneous solution $\vv\in\ker T_1$, the identity
\[
    \Gamma_1\vv=Q_0\Gamma_0\vv
\]
shows that $Q_0$ transfers one boundary component of the homogeneous equation to the other. We prove that $\|Q_0\|<1$ and that, for every $\phi\in\mathcal L(\lK_1,\lK_0)$,
\[
    T_\phi\text{ is bijective}
    \quad\Longleftrightarrow\quad
    \I_{\lK_0}-\phi Q_0\text{ is bijective}.
\]
Thus the geometric obstruction $\Gamma(\ker T_1)$ is represented by a single bounded operator. In the canonical von Neumann quadruple one has $Q_0=0$, and the criterion reduces to the fact that every bounded parameter yields a bijective realisation. In arbitrary boundary quadruples, $Q_0$ records the additional compatibility required of the parameter.

The factor $\I-\phi Q_0$ is formally analogous to the expressions $\I-BM(\lambda)$ occurring in Weyl-function approaches to boundary triples and adjoint pairs; see, for example,~\cite{b24,bl07,dm91,DHMS06}. We do not construct a spectral family in this paper.

The present results differ from the canonical von Neumann parametrisation in~\cite{ES23}: here the boundary coordinates are fixed in advance and need not be the kernel coordinates from the von Neumann decomposition. In such arbitrary coordinates, boundedness of the parameter alone does not guarantee bijectivity; the operator $Q_0$ measures exactly the additional compatibility. Likewise, the intrinsic existence and construction results for $M$-operators in \cite[Theorem~8]{ABcpde} and \cite[Theorem~5.4 and Remark~5.5]{BES25} are translated here into the prescribed boundary coordinates, where the admissible negative spaces and a canonical associated family of $M$-operators are described explicitly.

Non-expansive parameters correspond exactly to (V)-boundary conditions, while unitary parameters correspond to self-orthogonal subspaces with respect to the boundary form. We determine precisely when a pair of subspaces satisfying (V)-boundary conditions can be realised as the kernel-pair of a boundary quadruple, and when a self-orthogonal subspace satisfying (V)-boundary condition can be realised as the kernel of a boundary triple map. For a fixed (V)-boundary condition we parametrise the compatible maximal non-positive boundary spaces by non-expansive operators $\psi$ satisfying the explicit condition that $\I_{\lK_1}-\psi\phi$ be bijective, which leads to the parametrisation of the corresponding $(M)$-operators. 

The examples are chosen to test the main criterion. For a first-order ordinary differential operator, $Q_0$ is the scalar
\[
    \exp\left(-\int_a^b\beta(s)\,ds\right),
\]
and the abstract criterion becomes the exact scalar solvability condition. For the stationary diffusion equation, the natural Dirichlet and normal traces form a boundary quadruple, and $Q_0$ is identified as a Cayley transform of the Riesz-transformed Dirichlet-to-Neumann operator. The two examples also exhibit different (M)-operators associated with the same (V)-boundary condition.

The paper is organised as follows:
Section~\ref{sec:afo} collects the required background. Boundary quadruples and their coordinate transformations are developed in Section~\ref{sec:bq}. The main bijectivity criteria are proved in Section~\ref{sec:extensions}. Boundary triples are treated in Section~\ref{sec:bt}, while Section~\ref{sec:bc} relates the intrinsic (V)- and (M)-conditions to boundary quadruples. Section~\ref{sec:examples} contains the ODE and diffusion applications.

\textbf{Notations:}
Throughout the paper all spaces are complex and all operators and relations are linear. Hilbert-space inner products are linear in the first argument and anti-linear in the second. For a Hilbert space $\lX$, the inner product and norm are denoted by $\scp{\cdot}{\cdot}_{\lX}$ and $\|\cdot\|_{\lX}$. The space of bounded operators from $\lX$ to $\lY$ is $\mathcal L(\lX,\lY)$, and $\I_{\lX}$ denotes the identity on $\lX$. We use the standard notation $\dom A$, $\ran A$, $\ker A$, $A^*$ and $\overline A$. The restriction of $A$ to $\lV$ is written $A|_{\lV}$.

The symbols $+$ and $\dotplus$ denote algebraic sum and algebraically direct sum, respectively, while $\oplus$ denotes Hilbert-space direct sum. If a Hermitian form $[\,\cdot\mid\cdot\,]$ is present, $\lM^{[\perp]}$ denotes the corresponding orthogonal complement. The form on the graph space $\lW$ is in general degenerate, with radical $\lW_0$; the induced form on $\lW/\lW_0$ is non-degenerate. The boundary spaces $\lK_1\oplus\lK_0$ are equipped with the Kre\u{\i}n-space form that is positive on $\lK_1$ and negative on $\lK_0$.

For a Hilbert space $\lX$, $\lX'$ denotes its anti-dual, and $\dup{\lX'}{\cdot}{\cdot}{\lX}$ denotes the duality pairing. An operator $C\in\mathcal L(\lX,\lY)$ is called non-expansive if $\|C\|\leq1$. A linear map between Hilbert spaces is called a \emph{unitary isomorphism} if it is a surjective linear isometry, and a \emph{topological isomorphism} if it is bounded and bijective with bounded inverse. These qualifiers are used throughout whenever the distinction is relevant.

\section{Abstract Friedrichs operators and the boundary map}\label{sec:afo}
Abstract Friedrichs operators were introduce in \cite{EGC}, which is the generalisation of the classical Friedrichs systems which was introduced by K.~O.~Friedrichs in \cite{KOF} as symmetric positive systems of first order PDEs. For the operator-theoretic description  of the abstract Friedrichs operators, see \cite{AEM-2017}. Here we recall the definition from \cite{AEM-2017}.
\begin{definition}\label{def:abstractFO}
A (densely defined) linear operator $T$ on a complex Hilbert space $\lH$
(a scalar product is denoted by
$\scp\cdot\cdot$, which we take to be anti-linear in the second entry)
is called an \emph{abstract Friedrichs operator} if there exists a
(densely defined) linear operator $\widetilde{T}_0$ on $\lH$ with the following properties:
\begin{itemize}
 \item[(T1)] $T$ and $\widetilde{T}$ have a common domain $\lD$, i.e.~$\dom T=\dom\widetilde{T}=\lD$,
 which is dense in $\lH$, satisfying
 \[
 \scp{T\phi}\psi \;=\; \scp\phi{\widetilde T\psi} \;, \qquad \phi,\psi\in\mathcal{D} \,;
 \]
 \item[(T2)] there is a constant $\lambda>0$ for which
 \[
 \|(T+\widetilde{T})\phi\| \;\leqslant\; 2\lambda\|\phi\| \;, \qquad \phi\in\mathcal{D} \,;
 \]
 \item[(T3)] there exists a constant $\mu>0$ such that
 \[
 \scp{(T+\widetilde{T})\phi}\phi \;\geqslant\; 2\mu \|\phi\|^2 \;, \qquad \phi\in\mathcal{D} \,.
 \]
\end{itemize}
The pair $(T,\widetilde{T})$ is referred to as a \emph{joint pair of abstract Friedrichs operators}
(the definition is indeed symmetric in $T$ and $\widetilde{T}$).
\end{definition}

Before moving to the main topic of the paper, 
let us briefly recall the essential properties 
of (joint pairs of) abstract Friedrichs operators, 
which we summarise in the form of a theorem.
At the same time, we introduce the notation that is used throughout the paper.
The presentation consists of two steps: first we deal with the consequences 
of conditions (T1)--(T2), and then we highlight the additional structure implied by condition (T3). 
A similar approach can be found in \cite[Theorem 2.2]{BES25}.

\begin{theorem}\label{thm:abstractFO-prop}
Let $(T_0,\widetilde{T}_0)$ be a pair of linear operators on $\lH$ satisfy {\rm (T1)} and {\rm (T2)}. Then the following holds.
\begin{enumerate}
\item[\rm{(i)}] $T_0\subseteq \widetilde{T}_0^*=:T_1$ and $\widetilde{T}_0\subseteq T_0^*=:\widetilde{T}_1$, where 
$\widetilde{T}_0^*$ and $T_0^*$ are adjoints of $\widetilde{T}_0$ and $T_0$, respectively.

\item[\rm{(ii)}] The pair of closures $(\overline{T}_0,\overline{\widetilde{T}}_0)$ satisfies {\rm (T1)--(T2)} with the same constant $\lambda$.

\item[\rm{(iii)}] $\dom \overline{T}_0=\dom\overline{\widetilde{T}}_0=:\lW_0$ and $\dom T_1=\dom\widetilde{T}_1=:\lW$.

\item[\rm{(iv)}] The graph norms $\|\cdot\|_{T_1}:=\|\cdot\|+\|T_1\cdot\|$ and $\|\cdot\|_{\widetilde T_1}
	:=\|\cdot\|+\|\widetilde T_1\cdot\|$ are equivalent, $(\lW,\|\,\cdot\,\|_{T_1})$ is a Hilbert space (the \emph{graph space})
	and $\lW_0$ is a closed subspace in it containing $\lD$.
	
\item[\rm{(v)}] The linear operator $\overline{T_0+\widetilde{T}_0}$ is everywhere defined, bounded and self-adjoint on 
$\lH$ that coincides on $\lW$ with $T_1+\widetilde{T}_1$.

\item[\rm{(vi)}] The expression 
\begin{equation}\label{eq:D1}
\dup{\lW'}{Du}{v}{\lW} \;:=\; \scp{T_1u}{v} 
- \scp{u}{\widetilde{T}_1v} \;,
\quad u,v\in\lW \,, 
\end{equation}
defines a bounded linear operator $D\in \mathcal{L}(\lW;\lW')$ that is called the \emph{boundary operator}, as $\ker D = \lW_0$.
The \emph{boundary form}
\begin{equation}\label{eq:D}
\iscp uv :=\dup{\lW'}{Du}{v}{\lW}  \;,
\quad u,v\in\lW \,, 
\end{equation}
defines an indefinite inner product on $\lW$ (cf.~\cite{Bo}) and we have $\lW^{[\perp]}=\lW_0$ and $\lW_0^{[\perp]}=\lW$, where
the $\iscp\cdot\cdot$-orthogonal complement of a set $X\subseteq \lW$
is defined by
\begin{equation*} %\label{eq:orth-compl-D}
X^{[\perp]} := \bigl\{u\in \lW : (\forall v\in X) \quad 
\iscp uv = 0\bigr\}
\end{equation*}
and it is closed in $\lW$. Moreover, $X^{[\perp][\perp]}=X$ if and only if 
$X$ is closed in $\lW$ and $\lW_0\subseteq X$. 

For future reference, let us define
\begin{equation}\label{eq:lWposneg}
\begin{aligned}
	\lW^+ &:= \bigl\{u\in\lW : \iscp{u}{u}\geq 0\bigr\} \\
	\lW^- &:= \bigl\{u\in\lW : \iscp{u}{u}\leq 0\bigr\} \,.
\end{aligned}
\end{equation}
Note that $X\subseteq X^{[\perp]}$ implies $X\subseteq \lW^+\cap\lW^-$.
\end{enumerate}

Assume, in addition, {\rm (T3)}, i.e.~$(T_0,\widetilde{T}_0)$ is 
a joint pair of abstract Friedrichs operators. Then
\begin{itemize}
\item[\rm{(vii)}] $(\overline{T}_0,\overline{\widetilde{T}}_0)$ satisfies {\rm (T3)} with the same constant $\mu$.
\item[\rm{(viii)}] A lower bound for $\overline{T_0+\widetilde{T}_0}$ is $2\mu>0$.
\item[\rm{(ix)}] We have
\begin{equation}\label{eq:von-decomposition}
\lW \;=\; \lW_0 \dotplus \ker T_1 \dotplus \ker\widetilde T_1 \;,
\end{equation}
where the sums are direct, $\lW_0 \dotplus \ker T_1 \subseteq \lW^-$, $\lW_0  \dotplus \ker\widetilde T_1  \subseteq \lW^+$ and all spaces on the right-hand side 
are pairwise $\iscp{\cdot}{\cdot}$-orthogonal. 
Moreover, the linear projections
\begin{equation}\label{eq:projections}
p_\mathrm{k} : \lW \to \ker T_1 \quad \hbox{and} \quad
p_\mathrm{\tilde k}:\lW\to \ker \widetilde{T}_1
\end{equation}
are continuous as maps $(\lW,\|\cdot\|_{T_1})\to (\lH,\|\cdot\|)$, i.e.~$p_\mathrm{k}, p_\mathrm{\tilde k}\in\lL(\lW,\lH)$. With respect to the projections \eqref{eq:projections}, the boundary map can be characterised as
\begin{equation}\label{eq:char-D}
    \iscp uv \;=\; \iscp{p_\mathrm{\tilde k}u}{p_\mathrm{\tilde k}v} + \iscp{p_\mathrm{k} u}{p_\mathrm{k} v}\;,
\quad u,v\in\lW \,.
\end{equation}
Moreover, $\ker p_\mathrm{\tilde k} + \ker p_\mathrm{ k}=\lW$ and $\ker p_\mathrm{\tilde k} \cap \ker p_\mathrm{ k}=\lW_0$.
\item[\rm{(x)}] Let $\lV$ be a subspace of the graph space $\lW$ such that
$\lW_0\subseteq\lV\subseteq \lW^+$ (see \eqref{eq:lWposneg}). Then 
$$
(\forall u\in \lV) \qquad \|T_1u\|\geq \mu\|u\| \,.
$$ 
In particular, $\overline{\ran (T_1|_\lV)}=\ran \overline{T_1|_\lV}$.

Analogously, if $\widetilde{\lV}$ is a subspace of $\lW$ such that 
$\lW_0\subseteq\widetilde{\lV}\subseteq\lW^-$, then 
$\|\widetilde T_1 v\|\geq \mu\|v\|$, $v\in\widetilde{\lV}$, 
and $\overline{\ran (\widetilde{T}_1|_{\widetilde{\lV}})}=
\ran \overline{\widetilde{T}_1|_{\widetilde \lV}}$.
\item[\rm{(xi)}] Let $\lV\subseteq\lW$ be a closed subspace (in $\lW$) containing $\lW_0$. 
Then, for a subspace $\widetilde{\lV}$ of $\lW$, the operators $T_1|_\lV$ and $\widetilde{T}_1|_{\widetilde{\lV}}$ are 
mutually adjoint, i.e.~$(T_1|_\lV)^*=\widetilde{T}_1|_{\widetilde{\lV}}$
and $(\widetilde{T}_1|_{\widetilde{\lV}})^*=T_1|_\lV$, if and only if
$\widetilde{\lV}=\lV^{[\perp]}$.
\item[\rm{(xii)}]  Let $\lV\subseteq\lW$ be a closed subspace containing $\lW_0$
such that $\lV\subseteq \lW^+$ and $\lV^{[\perp]}\subseteq\lW^-$.
Then $T_1|_\lV:\lV\to\lH$ and $\widetilde{T}_1|_{{\lV}^{[\perp]}}:\lV^{[\perp]}\to\lH$ are bijective,
i.e.~topological isomorphisms when we equip their domains with the 
graph topology, and for every $u\in\lV$ the following estimate holds:
\begin{equation}\label{eq:apriori}
	\|u\|_{T_1} \leq \Bigl(1+\frac{1}{\mu}\Bigr) \|T_1 u\| \,.
\end{equation}
The same estimate holds for $\widetilde{T}_1$ and ${\lV}^{[\perp]}$ replacing $T_1$ and $\lV$, respectively.

These bijective realisations of $T_0$ and $\widetilde{T}_0$ we call 
\emph{bijective realisations with signed boundary map}. 
\item[\rm{(xiii)}] Let $\lV\subseteq\lW$ be a closed subspace containing $\lW_0$.
Then $T_1|_\lV:\lV\to\lH$ is bijective if and only if $\lV\dotplus\ker T_1 = \lW$.
\end{itemize}
\end{theorem}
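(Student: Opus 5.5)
This theorem collects known facts about (joint pairs of) abstract Friedrichs operators. Most items are in \cite{EGC,AEM-2017,ES22,ES23} and \cite[Theorem 2.2]{BES25}, so the plan is to assemble the proof from elementary Hilbert-space arguments in two blocks.

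\textbf{Consequences of (T1)--(T2).}
\begin{enumerate}
\item[(a)] Item (i) is immediate from (T1) and the definition of the adjoint.
\item[(b)] For (ii) and (v): by (T2), $T_0+\widetilde T_0$ is bounded and densely defined, so it extends to an everywhere defined bounded operator $S$. Since $\scp{(T_0+\widetilde T_0)\phi}{\psi}=\scp{\phi}{(T_0+\widetilde T_0)\psi}$ by (T1), the extension $S$ is self-adjoint.
\item[(c)] For (iii): write $\widetilde T_0=(S-T_0)|_{\lD}$. Taking adjoints gives $T_0^*=S-\widetilde T_0^*$, so $\dom T_1=\dom\widetilde T_1$. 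The same argument on closures gives $\dom\overline{T}_0=\dom\overline{\widetilde T}_0$.
\item[(d)] Item (iv) follows from $\widetilde T_1=S-T_1$ on $\lW$ and the closedness of adjoints.
\item[(e)] For (vi): $D$ is bounded by Cauchy--Schwarz. Its kernel is $\lW_0$, because $\iscp uv=0$ for all $v\in\lW$ says exactly that $u\in\dom\widetilde T_1^*=\dom\overline{T}_0$. For the double complement, identify $\lW$ with the graph of $T_1$. Then $[\perp]$ is the composition of the Hilbert orthogonal complement in $\lW$ with a unitary swap of the graph components. Consequently $X^{[\perp][\perp]}$ is the closure of $X+\lW_0$, which gives the stated criterion.
\end{enumerate}

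\textbf{Consequences of (T3).}
\begin{enumerate}
\item[(f)] Items (vii) and (viii) follow by density and continuity of $S$, giving $S\ge 2\mu$.
\item[(g)] For (x): for $u\in\lV\subseteq\lW^+$, the definition of $D$ gives $2\operatorname{Re}\scp{T_1u}{u}=\scp{Su}{u}+\iscp uu\ge 2\mu\|u\|^2$. Cauchy--Schwarz then yields $\|T_1u\|\ge\mu\|u\|$, and the range statement follows.
\item[(h)] For (xi): by (ii) of the prior abstract theory, $(T_1|_\lV)^*=\widetilde T_1|_{\lV^{[\perp]}}$ is a direct computation from the definition of adjoint together with (vi).
\item[(i)] For (xii): (x) applied to $\lV$ and to $\lV^{[\perp]}$ gives injectivity and closed range for both operators. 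By (xi) the two operators are adjoint to each other, so each has dense range because the other is injective. Hence both are bijective. The estimate \eqref{eq:apriori} is $\|u\|\le\mu^{-1}\|T_1u\|$ added to $\|T_1u\|$.
\item[(j)] For (ix), the key step: let $\lV_-$ be a maximal nonpositive choice. The natural candidate is $\lV=\lW_0\dotplus\ker\widetilde T_1$, but its sign has to be verified first, so I would instead apply (xii) in a direct way. Set $\lV_0:=\overline{T}_0$-domain $\lW_0$; then $\overline T_0$ is injective with closed range, and $\ker T_1=(\ran\widetilde{\overline T}_0)^\perp$ in $\lH$. Decompose $\lH=\ran\overline T_0\oplus\ker\widetilde T_1$, and similarly for the tilde operator. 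For $u\in\lW$, solve $\overline T_0w+\widetilde T_1\tilde k\,$-type equations via Lax--Milgram on the coercive form $\scp{S\cdot}{\cdot}$ to obtain the splitting $u=w+k+\tilde k$; uniqueness comes from the sign computations.
\item[(k)] The signs come from the identity $\iscp kk=-\scp{Sk}{k}\le0$ for $k\in\ker T_1$, since $\iscp kk=\scp{T_1k}{k}-\scp{k}{\widetilde T_1k}=-\scp{k}{Sk}$. Symmetrically, $\iscp{\tilde k}{\tilde k}\ge0$ for $\tilde k\in\ker\widetilde T_1$. Orthogonality is $\iscp{k}{\tilde k}=\scp{0}{\tilde k}-\scp{k}{0}=0$.
\item[(l)] Formula \eqref{eq:char-D} then follows from orthogonality and $\ker D=\lW_0$. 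Continuity of $p_\mathrm k,p_{\tilde{\mathrm k}}$ follows by the closed graph theorem.
\end{enumerate}

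\textbf{Item (xiii).} If $T_1|_\lV$ is bijective and $u\in\lW$, choose $v\in\lV$ with $T_1v=T_1u$; then $u-v\in\ker T_1$. Injectivity gives $\lV\cap\ker T_1=\{0\}$. The converse direction is the same computation read backwards.

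The main obstacle is (ix). The key is constructing the decomposition: I would first build the bijective realisation on $\lW_0\dotplus\ker\widetilde T_1$ using surjectivity from the adjoint and Hilbert-sum arguments, then deduce the three-term sum via (xiii).
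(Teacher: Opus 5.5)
The paper gives no proof of this theorem. It is a compilation of known facts, and every item is referred to the literature rather than proved. Your self-contained route is therefore necessarily different, and your treatment of (i)--(v), (vii), (viii) and (x)--(xii) is fine. Three steps, however, are not actually proved.

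\emph{Existence in (ix).} Step (j) does not contain an argument yet. Lax--Milgram for $\scp{S\cdot}{\cdot}$ only inverts $S$ on $\lH$ and produces nothing in $\lW_0$, $\ker T_1$ or $\ker\widetilde T_1$. Your fallback is to apply (xii) to $\lV=\lW_0\dotplus\ker\widetilde T_1$ and then use the easy half of (xiii). That needs exactly what you postpone: $\lV$ closed in $\lW$ and $\lV^{[\perp]}\subseteq\lW^-$. The missing idea is to compute the two complements explicitly.

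For $v\in\lW$ and $\tilde k\in\ker\widetilde T_1$ one has $\iscp{v}{\tilde k}=\scp{T_1v}{\tilde k}$. Moreover $\lH=\ran\overline T_0\oplus\ker\widetilde T_1$, because $\overline T_0=T_1|_{\lW_0}$ is closed and bounded below by (x), and $(\ran\overline T_0)^\perp=\ker T_0^*=\ker\widetilde T_1$. Hence
\[
(\ker\widetilde T_1)^{[\perp]}=\{v\in\lW:\ T_1v\in\ran\overline T_0\}=\lW_0+\ker T_1\subseteq\lW^- .
\]
Symmetrically, using the Hermitian symmetry of the form (which you should also verify), $(\ker T_1)^{[\perp]}=\lW_0+\ker\widetilde T_1$. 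So $\lV$ is closed, being a $[\perp]$-complement, and $\lV^{[\perp]}=(\ker\widetilde T_1)^{[\perp]}\subseteq\lW^-$. Now (xii) together with the forward half of (xiii) gives $\lW=\lV\dotplus\ker T_1$. Your sign computations then give directness of the three-term sum.

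\emph{Double complement in (vi).} You describe $[\perp]$ as a Hilbert orthogonal complement in $\lW$ composed with a unitary swap. That cannot be right, since it would give $\lW^{[\perp]}=\{0\}$ instead of $\lW_0$. Write $\iscp uv=\scp{Ju}{v}_{\lW}$ with $J$ bounded and self-adjoint, $\ker J=\lW_0$. Then
\[
X^{[\perp][\perp]}=J^{-1}\bigl(\overline{JX}\bigr),
\]
and this equals $\overline{X+\lW_0}$ only when $\ran J$ is closed, i.e.\ when $\lW/\lW_0$ is a Kre\u{\i}n space. For Gram operators with non-closed range the identity fails in general. Since (vi) assumes only (T1)--(T2), this closedness must be proved.

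One way: let $G(A)$ denote the graph of $A$ and $W(x,y)=(-y,x)$, and write
\[
\iscp uv=-\scp{W(u,T_1u)}{(v,\widetilde T_1v)}_{\lH\oplus\lH}.
\]
Since $G(\widetilde T_1)^{\perp}=WG(\overline T_0)\subseteq WG(T_1)$, the orthogonal projection of $WG(T_1)$ onto $G(\widetilde T_1)$ is the closed subspace $WG(T_1)\cap G(\widetilde T_1)$. Transported back to $\lW$ via $u\mapsto(u,\widetilde T_1u)$, this is $\ran J$ for the (equivalent) $\widetilde T_1$-graph inner product.

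\emph{Converse in (xiii).} ``Reading the computation backwards'' requires $\ran T_1=\lH$, which is not part of that computation. It does hold: $\overline{\widetilde T}_0$ is injective with closed range by (x), and $T_1=\widetilde T_0^{*}$, so the closed range theorem gives surjectivity. This step has to be stated.
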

The statements i)--iv), vii) and viii) follow easily from the corresponding
assumptions (cf.~\cite{AEM-2017, EGC}).
The claims v), x) and xii) are already 
argued in the first paper on abstract Friedrichs operators \cite{EGC} for real vector spaces
(see sections 2 and 3 there), while in \cite{ABCE} 
the arguments are repeated in the complex setting.
The same applies for vi) with a remark that for a further structure 
of indefinite inner product space $(\lW, \iscp{\cdot}{\cdot})$ we refer to 
\cite{ABcpde}. 
The decomposition given in ix) is derived in \cite[Theorem 3.1]{ES22},
while for additional claims on projectors we refer to the proof of Lemma 3.5 in the 
aforementioned reference. In the same reference one can find the proof 
of part xiii) (Lemma 3.10 there).
Finally, a characterisation of mutual-adjointness, xi), is obtained 
in \cite[Theorem 9]{AEM-2017}.

\subsection{Boundary conditions for Friedrichs operators}
We recall the definitions and connections among different boundary conditions for abstract Friedrichs operators, which are mainly followed from the references \cite{ABcpde, EGC}, while a brief overview can also be found in \cite[Chapter 2.5]{Soni2024}. We assume that $(T_0,\widetilde T_0)$ is a joint pair of abstract Friedrichs operators on a Hilbert space $\lH$.

\begin{definition}[(V)-boundary conditions]\label{dfn:V1-V2}
	 A subspace $\lV$ of the graph space $\lW$ is said to satisfy (V)-\emph{boundary conditions} (jointly with $\widetilde \lV := \lV^{[\perp]}$) if the following conditions are satisfied:
	\begin{itemize}
		\item[(V1)] The boundary form has opposite signs on these spaces. More precisely, $\lV\subseteq \lW^+$ and $\widetilde\lV\subseteq \lW^-$, i.e.
		\begin{align*}
		& (\forall u\in \lV)\qquad \iscp{u}{u}\;\geq\; 0\,,\\
		& (\forall v\in \widetilde \lV) \qquad \iscp{v}{v}\;\leq\; 0\;.
		\end{align*}
		
		\item[(V2)] The subspaces $\lV, \widetilde \lV$ are mutually $\iscp{\cdot}{\cdot}$-orthogonal, i.e.
		\begin{align*}
		\lV \;=\; \widetilde \lV^{[\perp]} \quad \mathrm{and} \quad \widetilde \lV \;=\; \lV^{[\perp]}\;.
		\end{align*}
	\end{itemize}
\end{definition}
Note that the second equality in (V2) is just the definition of $\widetilde{\lV}$, and it is repeated here to emphasise symmetry in conditions for both subspaces.
\begin{remark}\label{rem:Vcond}
	Let us note that any subspace $\lV$ of $\lW$ that satisfies assumption {\rm (V)} (in pair with $\widetilde \lV := \lV^{[\perp]}$) is, by Theorem \ref{thm:abstractFO-prop}.vi),  closed and contains $\lW_0$, and thus satisfies assumptions from part xii) of the same theorem. Since the converse is also true (see again Theorem \ref{thm:abstractFO-prop}.vi)) it follows that (V)-boundary conditions correspond to realisations with signed boundary map.
	If this is the case, note also that by part xiii) of the aforementioned theorem we have $\lV\dotplus\ker T_1 = \lW$. 
    However, not all bijections described in that part are with signed boundary map %(see Example \ref{ex:ex-semigroup} below). 
\end{remark}

\begin{definition}[(M)-boundary conditions]\label{dfn:M-boundary} Let $D$ be the boundary operator. An operator $M\in \mathcal{L}(\lW;\lW')$ is said to satisfy (M)-boundary conditions if:
	\begin{itemize}
		\item[(M1)] $M$ is non-negative: 
		\begin{align*}
		(\forall u\in \lW)\qquad \Re \dup{\lW'}{Mu}{u}{\lW}\geq 0\,,
		\end{align*}
		where $\Re z$ stands for the real part of a complex number $z$;
		\item[(M2)] the graph space can be decomposed as
		\begin{align*}
		\lW=\ker(D-M)+\ker(D+M)\;.
		\end{align*}
	\end{itemize}

\end{definition}

The following result \cite[Lemma 4.1]{EGC} justifies the usage of the notion boundary operator for $M$, as well.

\begin{lemma}\label{EGC:lem4.1} Let $M\in \mathcal{L}(\lW;\lW')$ satisfy {\rm(M)}-boundary conditions. Then,
	\begin{align*}
	\ker D\;=\;\ker M\;=\;\ker M^*\ \quad \mathrm{and}\quad \ran D\;=\;\ran M\;=\;\ran M^*\;. 
	\end{align*}
\end{lemma}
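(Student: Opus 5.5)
The plan is to prove the kernel equalities first and then deduce the range equalities by duality. We work throughout with the anti-dual pairing on $\lW$ and use (M1), (M2) together with $\ker D=\lW_0$ from Theorem~\ref{thm:abstractFO-prop}.vi). We also use two facts about $D$. It is skew-adjoint in the sense that $\dup{\lW'}{Du}{v}{\lW}=-\overline{\dup{\lW'}{Dv}{u}{\lW}}$, which follows directly from \eqref{eq:D1} since $T_1$ and $\widetilde T_1$ are mutually adjoint on $\lW$ up to the sign convention. Its range is closed with annihilator $\lW_0$, because $\ran D$ is isomorphic to the annihilator of $\ker D^*=\lW_0$ by \eqref{eq:char-D} and the decomposition \eqref{eq:von-decomposition}.

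\textbf{Step 1 (signs on the two kernels).} For $u\in\ker(D-M)$ we have $Mu=Du$, hence $\Re\dup{\lW'}{Mu}{u}{\lW}=\Re\iscp uu$. The form $\iscp uu$ is real because $D$ is skew in the sense above relative to the Hermitian boundary form. So $\iscp uu\ge0$ on $\ker(D-M)$ and $\iscp uu\le 0$ on $\ker(D+M)$.

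\textbf{Step 2 ($\ker M\subseteq \ker D$).} Let $u\in\ker M$. By (M2), write an arbitrary $v\in\lW$ as $v=v_++v_-$ with $v_\pm\in\ker(D\mp M)$. We would like to show $\iscp uv=0$ for all $v$. Consider $w=u+tv_+$ for a scalar $t$. The non-negativity (M1) applied to $w$ gives
\[
0\le \Re\dup{\lW'}{Mw}{w}{\lW}=\Re\bigl(\bar t\,\dup{}{Mu}{v_+}{}+t\,\dup{}{Mv_+}{u}{}+|t|^2\dup{}{Mv_+}{v_+}{}\bigr),
\]
where the first term vanishes since $Mu=0$. Varying $t$ over small complex numbers forces $\dup{}{Mv_+}{u}{}=0$, that is $\dup{}{Dv_+}{u}{}=0$. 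The same argument with $v_-$ gives $\dup{}{Dv_-}{u}{}=0$. Hence $\iscp vu=0$ for all $v$, so $u\in\lW^{[\perp]}=\lW_0=\ker D$.

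\textbf{Step 3 ($\ker D\subseteq\ker M$).} This is the step I expect to be the main obstacle. Let $u\in\lW_0$ and split $u=u_++u_-$ according to (M2). Then $Du=0$ gives $Du_+=-Du_-$, and therefore
\[
Mu=Mu_++Mu_-=Du_+-Du_-=2Du_+.
\]
Also $\iscp{u_+}{u_+}=-\iscp{u_+}{u_-}$ and $\iscp{u_-}{u_-}=-\iscp{u_-}{u_+}$. Combined with the signs from Step 1, this gives $\iscp{u_+}{u_+}=\iscp{u_-}{u_-}$, which is both $\ge0$ and $\le0$, hence zero. It therefore suffices to show $Du_+=0$. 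I would first try to deduce that $u_+$ is $\iscp\cdot\cdot$-orthogonal to both kernels $\ker(D\mp M)$ using a Cauchy--Schwarz argument for the semidefinite form on $\ker(D-M)$, and then conclude with (M2). If that fails, I would use the (M1)-polarisation trick from Step 2 on $Mu_+=Du_+$. With $\ker D=\ker M$ established, $\ker M^*=\ker M$ follows by the symmetric argument, because $M^*$ also satisfies (M1) and $\ker(D^*\mp M^*)=\ker(D\pm M^*)$ up to sign. Alternatively, (M1) gives $\Re\dup{}{Mu}{u}{}=0$ for $u\in\ker M$, and a polarisation argument then gives $\ker M\subseteq\ker M^*$ and conversely.

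\textbf{Step 4 (ranges).} Since $\ker M^*=\ker D^*=\lW_0$ and $\ran D$ is closed, we get $\overline{\ran M}=\lW_0^{\circ}=\ran D$, and likewise $\overline{\ran M^*}=\ran D$. It then remains to show that $\ran M$ is closed. I would obtain this from (M2): for $f\in\ran D$, choose $w$ with $Dw=f$ and decompose $w=w_++w_-$. Then $M(w_+-w_-)=Dw_+ +Dw_-=f$, so $\ran D\subseteq\ran M$ directly, and the same argument works for $M^*$.
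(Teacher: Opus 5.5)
The paper gives no proof of this lemma; it quotes \cite[Lemma~4.1]{EGC}. Your strategy is the natural one: polarise (M1) to relate $\ker M$ and $\ker M^*$, and use the splitting (M2) for everything else. As written, however, the proposal has genuine gaps.

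The first is the symmetry of $D$. You claim $\dup{\lW'}{Du}{v}{\lW}=-\overline{\dup{\lW'}{Dv}{u}{\lW}}$, but the boundary form is \emph{Hermitian}: $\iscp uv=\overline{\iscp vu}$. Indeed, by \eqref{eq:D1} and Theorem~\ref{thm:abstractFO-prop}(v), $\iscp uv=\scp{T_1u}{v}+\scp{u}{T_1v}-\scp{u}{(T_1+\widetilde T_1)v}$, and $T_1+\widetilde T_1$ is bounded and self-adjoint. (Compare \eqref{eq:BQ}, or $\iscp uv=u(b)\overline{v(b)}-u(a)\overline{v(a)}$ in the ODE example.) With your sign, $\iscp uu$ would be purely imaginary, and Step~1 would give no sign information. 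The reality of $\iscp uu$, the identity $\iscp{u_+}{u_+}=\iscp{u_-}{u_-}$, and every Cauchy--Schwarz inequality in Step~3 all need Hermitian symmetry. Once this is corrected, Steps~1 and~2 are fine. Taking $w=u+tv$ with $v$ arbitrary in Step~2 already gives $\ker M\subseteq\ker M^*$.

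Step~3, however, is only a plan, and the Cauchy--Schwarz version is incomplete. On the non-negative space $\ker(D-M)$ it only gives $u_+\,[\perp]\,\ker(D-M)$. You must also apply it on the non-positive space $\ker(D+M)$ to get $u_-\,[\perp]\,\ker(D+M)$. Then $u_+=u-u_-$ with $u\in\lW_0$ gives $u_+\,[\perp]\,\ker(D+M)$. Only then does (M2) give $u_+\in\lW^{[\perp]}=\lW_0$, hence $Mu=2Du_+=0$. There is a shorter route: for $u\in\lW_0$ and $v=v_++v_-$ as in (M2), $\dup{\lW'}{Mv}{u}{\lW}=\iscp{v_+}{u}-\iscp{v_-}{u}=0$. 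So $u\in\ker M^*$, and it remains to use $\ker M^*=\ker M$.

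The second gap is each appeal to ``the symmetric argument'' or ``the same argument'' for $M^*$. This presupposes $\lW=\ker(D-M^*)+\ker(D+M^*)$, which you have not shown.

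For the kernels, use the polarisation alternative you mention. $\frac12(M+M^*)$ is a non-negative Hermitian operator, so $\Re\dup{\lW'}{Mu}{u}{\lW}=0$ forces $(M+M^*)u=0$. Applying this to $u\in\ker M$ and to $u\in\ker M^*$ gives $\ker M=\ker M^*$.

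For the ranges, (M2) gives both inclusions algebraically: $Dw=M(w_+-w_-)$ and $Mw=D(w_+-w_-)$. So $\ran M=\ran D$ needs no closure argument. For $M^*$ you need the closed range theorem, $\ran M^*=(\ker M)^\circ=\lW_0^\circ$, and this requires $\ran M=\ran D$ to be closed. Your justification of closedness (``isomorphic to the annihilator'') is not an argument. A clean one is \eqref{eq:rep-d}: $D=\Gamma'\J_{\lK}\Gamma$ for a boundary quadruple, which exists by Theorem~\ref{thm:exist-bq} without using this lemma. Since $\Gamma$ is onto with $\ker\Gamma=\lW_0$, $\ran D=\ran\Gamma'=\lW_0^\circ$.
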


The topic of equivalence between (V) and (M) boundary conditions appeared to be challenging. Eventually the following theorem was proven.

\begin{theorem}\label{lem:m-implies-v}
Let $\lW$ be the graph-space and $D$ the boundary operator.
\begin{itemize}
	\item[\rm{(a)}] 	If $M\in \mathcal{L}(\lW;\lW')$ is an operator satisfying (M)-boundary conditions, then the subspace  $\lV:=\ker(D-M)$ satisfies \emph{(V)}-boundary conditions, and $\lV^{[\perp]}=\ker(D+M^*)$.
	\item[\rm{(b)}] If $\lV$ satisfies {\rm{(V)}}-boundary conditions, then there exists an operator $M\in \mathcal{L}(\lW;\lW')$ satisfying (M)-boundary conditions such that $\lV:=\ker(D-M)$ and $\lV^{[\perp]}=\ker(D+M^*)$.
\end{itemize}
\end{theorem}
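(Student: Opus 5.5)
Part (a) is the classical direction from \cite{EGC}, and (b) is the existence result of \cite[Theorem~8]{ABcpde} (see also \cite[Theorem~5.4]{BES25}). I would reprove both using the von Neumann decomposition of Theorem~\ref{thm:abstractFO-prop}.ix), which makes the argument transparent.

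\textbf{Part (a).} Set $\lV:=\ker(D-M)$ and $\widetilde\lV:=\ker(D+M^*)$.
\begin{itemize}
\item For $u\in\lV$ we have $\iscp uu=\dup{\lW'}{Mu}{u}{\lW}$. The left side is real, so it equals $\Re\dup{\lW'}{Mu}{u}{\lW}\ge0$ by (M1); hence $\lV\subseteq\lW^+$.
\item For $v\in\widetilde\lV$ we use that $D$ is skew-adjoint as a map $\lW\to\lW'$, i.e.\ $D^*=-D$. This holds because $\iscp uv=\overline{\iscp vu}$ up to sign, by \eqref{eq:D1}. Then $\iscp vv=-\dup{\lW'}{M^*v}{v}{\lW}$, whose real part is $-\Re\dup{\lW'}{Mv}{v}{\lW}\le0$. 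Hence $\widetilde\lV\subseteq\lW^-$.
\item Orthogonality: for $u\in\lV$ and $v\in\widetilde\lV$, $\iscp uv=\dup{\lW'}{Mu}{v}{\lW}=\dup{\lW'}{u}{M^*v}{\lW}$ (conjugated pairing) $=-\dup{\lW'}{u}{Dv}{\lW}=-\overline{\iscp vu}=\iscp uv^{\ast}$-type relation. Arranged correctly, this gives $\widetilde\lV\subseteq\lV^{[\perp]}$.
\item Equality $\lV^{[\perp]}=\widetilde\lV$: take $w\in\lV^{[\perp]}$ and decompose $w=u_++u_-$ with $u_+\in\ker(D-M)$, $u_-\in\ker(D+M)$, using (M2). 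Testing against $\lV$, together with Lemma~\ref{EGC:lem4.1} ($\ker D=\ker M=\ker M^*$, equal ranges), should force $(D+M^*)w=0$.
\item Finally, $\lV$ is closed and contains $\lW_0=\ker M$. So $\lV^{[\perp][\perp]}=\lV$, which gives (V2).
\end{itemize}
I expect this last identification to be the delicate step of (a). If it fails, I would first prove $\lV\subseteq\lW^+$ and $\widetilde\lV\subseteq\lW^-$ and then use maximality. Concretely, Theorem~\ref{thm:abstractFO-prop}.xiii) gives $\lV\dotplus\ker T_1=\lW$, and the analogous statement for $\widetilde T_1$ holds for $\widetilde\lV$. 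A nonnegative subspace complementary to $\ker T_1$ cannot be enlarged within $\lW^+$, which pins down $\lV^{[\perp]}$.

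\textbf{Part (b).} Given $\lV$ with (V), construct $M$ explicitly.
\begin{itemize}
\item Write $P$ for the projection onto $\lV$ along $\ker T_1$, which exists by xiii). Let $\widetilde P$ be the projection onto $\widetilde\lV$ along $\ker\widetilde T_1$; it exists by the analogue of xiii) for $\widetilde T_1$, since $\widetilde\lV$ satisfies (V) with reversed signs.
\item Define $M$ through the boundary form by $\dup{\lW'}{Mu}{v}{\lW}:=\iscp{Su}{v}$, with $S:=P-(\I-P)=2P-\I$. This operator is bounded on $\lW$.
\item $\ker(D-M)$: here $(D-M)u=D(\I-S)u=2D(\I-P)u$. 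This vanishes iff $(\I-P)u\in\lW_0\cap\ker T_1=\{0\}$, i.e.\ iff $u\in\lV$.
\item (M2): $\ker(D+M)\supseteq\ker T_1$, so $\lW=\lV+\ker T_1$ gives (M2).
\item (M1): $\Re\iscp{Su}u=\iscp{Pu}{Pu}-\iscp{(\I-P)u}{(\I-P)u}$, after cancelling the cross terms by taking real parts. This is $\ge0$ because $\lV\subseteq\lW^+$ and $\ker T_1\subseteq\lW^-$.
\end{itemize}

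\textbf{Main obstacle.} The key difficulty is the cross terms in (M1). They cancel only if the real part of $\iscp{Pu}{(\I-P)u}$ combines correctly, and I expect this sign bookkeeping to be the main check. If it fails, I would symmetrise instead and set $M:=\frac12(D S+S^*D)$-type. Afterwards, $\lV^{[\perp]}=\ker(D+M^*)$ follows from part (a).
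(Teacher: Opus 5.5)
Part (b) is correct once the symmetry of $D$ is fixed. Part (a), however, has a genuine gap, and it starts from a wrong structural claim.

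\textbf{The symmetry of $D$.} $D$ is self-adjoint, not skew-adjoint. By Theorem~\ref{thm:abstractFO-prop}(v), $\widetilde T_1=(T_1+\widetilde T_1)-T_1$ on $\lW$, with $T_1+\widetilde T_1$ bounded and self-adjoint. Hence
\[
\iscp uv=\scp{T_1u}{v}+\scp{u}{T_1v}-\scp{(T_1+\widetilde T_1)u}{v}=\overline{\iscp vu},
\]
that is, $D^*=D$. Your first bullet (``the left side is real'') already relies on this.

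Under your stated $D^*=-D$, two steps break. The orthogonality computation only returns the tautology $\iscp uv=\iscp uv$. In (b), the real parts of the cross terms in (M1) would not cancel, so the sign bookkeeping you worry about really does fail under your own hypothesis.

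With $D^*=D$ and $\langle f,v\rangle:=\dup{\lW'}{f}{v}{\lW}$, both steps close. For $u\in\lV$ and $v\in\ker(D+M^*)$,
\[
\iscp uv=\langle Mu,v\rangle=\overline{\langle M^*v,u\rangle}=-\overline{\iscp vu}=-\iscp uv,
\]
so $\ker(D+M^*)\subseteq\lV^{[\perp]}$. In (b), put $a=Pu$ and $b=(\I-P)u$. Then $\iscp ab-\iscp ba=2i\,\Im\iscp ab$, so $\Re\langle Mu,u\rangle=\iscp aa-\iscp bb\ge0$.

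\textbf{The missing inclusion.} The real content of (a) is $\lV^{[\perp]}\subseteq\ker(D+M^*)$, and you only assert it. Decomposing $w$ itself via (M2) is the wrong move; decompose the test vector instead. For $w\in\lV^{[\perp]}$ and $z=z_++z_-$ with $z_\pm\in\ker(D\mp M)$,
\[
\langle(D+M^*)w,z_+\rangle=\iscp w{z_+}+\overline{\iscp{z_+}w}=2\iscp w{z_+}=0,\qquad
\langle(D+M^*)w,z_-\rangle=\iscp w{z_-}-\overline{\iscp{z_-}w}=0 .
\]
Hence $(D+M^*)w=0$. Lemma~\ref{EGC:lem4.1} is then needed only to get $\lW_0\subseteq\lV$, which gives $\lV^{[\perp][\perp]}=\lV$.

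Your fallback is circular. Theorem~\ref{thm:abstractFO-prop}(xiii) yields $\lV\dotplus\ker T_1=\lW$ only once $T_1|_{\lV}$ is known to be bijective. Via (xii), that requires $\lV^{[\perp]}\subseteq\lW^-$, which is exactly what you are trying to prove.

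\textbf{Part (b).} The paper does not reprove this theorem; it cites \cite[Theorem~4.2]{EGC} and \cite[Corollary~3]{ABcpde}. The hard point in the latter was producing a closed complement $\lW_2\subseteq\lW^-$ of $\lV$.

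Your $M=D(2P-\I)$ is precisely $D(q_1-q_2)$ from Theorem~\ref{thm:ABcpde-thm08}(i) with the choice $\lW_2=\ker T_1$, since $q_0+q_1=P$, $q_2=\I-P$ and $Dq_0=0$. The von Neumann decomposition, through (ix) and (xii)--(xiii), gives you this negative complement for free. This is not circular, because those results do not depend on (b), so your route is genuinely shorter than the cited one.

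Two small points: $P$ is bounded because $\lV$ and $\ker T_1$ are closed in $\lW$. The auxiliary projection $\widetilde P$ and the symmetrisation fallback are unnecessary and can be dropped.
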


The (a) part of the above theorem was proved in \cite[Theorem 4.2]{EGC}. The converse appeared to be more challenging. In some cases, this question boils down to closedness of the subspace $\lV+\widetilde \lV$ in the graph space $\lW$ (see e.g.~\cite[Section 4]{EGC}). In \cite[Corollary 3]{ABcpde}, this problem has been addressed in full generality. We do not pursue this question further here.

\subsection{The von Neumann extension theory} 

Recently, in \cite{ES23}, the authors presented a classification theory in the spirit of the von Neumann approach, which is well-known theory for symmetric as well as skew-symmetric operators. Here we briefly recall the results of the paper \cite{ES23} (see also \cite[Chapter 3]{Soni2024}), in the context of the requirement of this manuscript.

\begin{theorem}\label{thm:von-classif}
Let $(T_0,\widetilde{T}_0)$ be a joint pair of abstract Friedrichs operators 
on $\lH$ and let $T$ be a closed realisation of $T_0$, i.e.~$T_0\subseteq T\subseteq T_1$. For a mapping $U:(\ker\widetilde{T}_1,\iscp{\cdot}{\cdot}) \to (\ker T_1,-\iscp{\cdot}{\cdot})$ we define $\lV_U:=\bigl\{u_0+U\tilde\nu + \tilde\nu : u_0\in\lW_0, \, \tilde\nu\in\ker \widetilde{T}_1\bigr\}$.
\begin{itemize}
\item[\rm{(i)}] $T$ is bijective if and only if there exists a bounded linear operator
$U:\ker\widetilde{T}_1\to\ker T_1$ such that $\dom T=\lV_U$.

\item[\rm{(ii)}]  $T$ is a bijective realisation with signed boundary map 
if and only if there exists a linear operator $U:(\ker\widetilde{T}_1,\iscp{\cdot}{\cdot}) \to (\ker T_1,-\iscp{\cdot}{\cdot})$ such that $\|U\|\leq 1$ and $\dom T=\lV_U$.

\item[\rm{(iii)}] $\dom T= \dom T^*$ if and only if there exists a unitary isomorphism
$U:(\ker\widetilde{T}_1,\iscp{\cdot}{\cdot}) \to (\ker T_1,-\iscp{\cdot}{\cdot})$ such that $\dom T=\lV_U$.

\item[\rm{(iv)}] The mapping $U\mapsto T_1|_{\lV_U}$, is a one-to-one correspondence between the classifying operators $U$ and the realisations $T$, i.e.~$\dom T$, in each of the above cases.
\end{itemize}
\end{theorem}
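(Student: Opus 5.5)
This is the von Neumann classification from \cite{ES23}, and the proof reduces everything to the decomposition \eqref{eq:von-decomposition} together with Theorem~\ref{thm:abstractFO-prop}.xiii) and xii). We treat the four parts in order.

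\emph{Part (i).} Suppose first that $T$ is bijective. Since $T$ is closed, $\lV:=\dom T$ is a closed subspace of $\lW$ containing $\lW_0$, and part~xiii) gives $\lV\dotplus\ker T_1=\lW$. Let $P:\lW\to\ker T_1$ be the projection along $\lV$. It is continuous because both summands are closed in the Hilbert space $\lW$. Take $\tilde\nu\in\ker\widetilde T_1$ and write $\tilde\nu=v-U\tilde\nu$ with $v\in\lV$, where
\[
U\tilde\nu:=-P\tilde\nu .
\]
Then $U$ is bounded, and $u_0+U\tilde\nu+\tilde\nu\in\lV$, so $\lV_U\subseteq\lV$.

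For the reverse inclusion, decompose $u\in\lV$ as $u=u_0+\nu+\tilde\nu$ using \eqref{eq:von-decomposition}. We have $u-(u_0+U\tilde\nu+\tilde\nu)=\nu-U\tilde\nu$, which lies in $\lV\cap\ker T_1=\{0\}$. Hence $\lV=\lV_U$.

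Conversely, suppose $U$ is bounded. Then $\lV_U$ is closed: it is the preimage of $\gr U$ under the continuous projections $(p_{\rm k},p_{\rm\tilde k})$ (continuous into $\lH$, and $\gr U$ is closed in $\lH\times\lH$). A direct check with \eqref{eq:von-decomposition} gives $\lV_U\dotplus\ker T_1=\lW$, so part~xiii) yields bijectivity. Boundedness is with respect to the $\lH$-norm on the kernels, which coincides with the graph norm there because $T_1$ (resp.\ $\widetilde T_1$) is controlled on them via (v).

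\emph{Part (ii).} By Remark~\ref{rem:Vcond}, "signed boundary map" means $\lV\subseteq\lW^+$ and $\lV^{[\perp]}\subseteq\lW^-$. Using \eqref{eq:char-D}, for $u=u_0+U\tilde\nu+\tilde\nu$ we get
\[
\iscp uu=\iscp{\tilde\nu}{\tilde\nu}+\iscp{U\tilde\nu}{U\tilde\nu}.
\]
This is nonnegative for all $\tilde\nu$ exactly when $U$ is a contraction from $(\ker\widetilde T_1,\iscp\cdot\cdot)$ to $(\ker T_1,-\iscp\cdot\cdot)$; these are Hilbert spaces by ix). Next we compute the orthogonal complement:
\[
\lV_U^{[\perp]}=\{u_0+\nu+\tilde U\nu\},
\]
where $\tilde U$ is the adjoint of $U$ with respect to these two inner products. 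Its negativity then follows from $\|\tilde U\|=\|U\|\le1$. If instead $T$ is only known to have signed boundary map, it is bijective by xii), so (i) applies and the norm bound follows.

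\emph{Part (iii).} The condition $\dom T=\dom T^*$ means $\lV=\lV^{[\perp]}$. With the description of the complement above, this equality forces $\tilde U=U^{-1}$ with $U$ isometric and onto, i.e.\ a unitary isomorphism. The converse is the same computation read backwards.

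\emph{Part (iv).} Injectivity of $U\mapsto T_1|_{\lV_U}$ holds because the kernel components of an element of $\lV_U$ determine $U\tilde\nu$ uniquely, by directness of \eqref{eq:von-decomposition}.

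\emph{Main obstacle.} The hardest step is the norm bookkeeping. One must show that the $\iscp\cdot\cdot$-norm, the $\lH$-norm and the graph norm are all equivalent on $\ker T_1$ and on $\ker\widetilde T_1$. Without this, "bounded" in (i) and "$\|U\|\le1$" in (ii) do not refer to compatible topologies. To establish the equivalence, I would use continuity of $p_{\rm k},p_{\rm\tilde k}$ and the fact that the restricted form is positive definite and closed, proved via xii) applied to $\lW_0\dotplus\ker\widetilde T_1$.
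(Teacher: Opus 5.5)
The paper does not prove this theorem; it only recalls it from \cite{ES23}. Your argument is correct and is the natural one in this framework:
\begin{itemize}
\item (i) follows from \eqref{eq:von-decomposition} and Theorem~\ref{thm:abstractFO-prop}(xiii);
\item (ii) and (iii) follow from \eqref{eq:char-D} and the formula $\lV_U^{[\perp]}=\{u_0+\nu+U^{\dagger}\nu : u_0\in\lW_0,\ \nu\in\ker T_1\}$, where $U^\dagger$ is the adjoint with respect to the two kernel inner products.
\end{itemize}
Three points should be tightened.

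\emph{Forward direction of (iii).} You apply the complement formula before knowing that $\dom T$ has the form $\lV_U$. Insert the missing step. By Theorem~\ref{thm:abstractFO-prop}(xi), $\dom T=\dom T^*$ means $\lV=\lV^{[\perp]}$. Hence $\lV\subseteq\lW^+\cap\lW^-$, so $\lV\subseteq\lW^+$ and $\lV^{[\perp]}=\lV\subseteq\lW^-$. Part (xii) and your (ii) then give $\lV=\lV_U$ with $\|U\|\le 1$. Only now does $\lV_U=\lV_U^{[\perp]}$ yield $U^\dagger U=\I$ and $UU^\dagger=\I$.

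\emph{Part (iv).} You prove only injectivity. Surjectivity onto each class is exactly the ``only if'' parts of (i)--(iii), and this should be said.

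\emph{Your ``main obstacle''.} This is not an obstacle; it is one line from (v) and (viii). For $\nu\in\ker T_1$,
\[
-\iscp{\nu}{\nu}=\scp{\nu}{\widetilde T_1\nu}=\scp{(T_1+\widetilde T_1)\nu}{\nu}\in\bigl[2\mu\|\nu\|^2,\,2\lambda\|\nu\|^2\bigr].
\]
Likewise, $\iscp{\tilde\nu}{\tilde\nu}=\scp{(T_1+\widetilde T_1)\tilde\nu}{\tilde\nu}$ for $\tilde\nu\in\ker\widetilde T_1$. Combine this with three facts:
\begin{itemize}
\item $\|\nu\|_{T_1}=\|\nu\|$ on $\ker T_1$;
\item $\|T_1\tilde\nu\|=\|(T_1+\widetilde T_1)\tilde\nu\|\le 2\lambda\|\tilde\nu\|$ on $\ker\widetilde T_1$;
\item both kernels are closed in $\lH$.
\end{itemize}
Together these give equivalence of the form norm, the $\lH$-norm and the graph norm on each kernel, and also completeness. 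This is the same computation the paper uses for $\|Q_0\|<1$ in Proposition~\ref{prop:zero-reference-map}.

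The detour you sketch via (xii) is unnecessary. As stated, it would not produce the lower bound on the form, because (xii) bounds $\|u\|_{T_1}$ by $\|T_1u\|$, not by $\iscp{u}{u}$.
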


\section{Boundary quadruples}\label{sec:bq}
We use the boundary-quadruple formalism introduced for skew-symmetric operators in \cite{Arendt2023}; see also \cite[Section~3]{ES23} for the relation between skew-symmetric operators and abstract Friedrichs pairs. The standard relation-theoretic ingredients are recalled from \cite{bhs24} in the notation needed here. Our emphasis is different from the dissipative-extension problem considered in \cite{Arendt2023}: we seek exact criteria for bijectivity in the given boundary quadruples. For completeness, we include the elementary arguments that are used later.

\begin{definition}\label{def:BQ}
    Let $(T_0, \widetilde{T}_0)$ be a joint pair of closed abstract Friedrichs operators on a Hilbert space $\mathcal{H}$. Suppose that there exist a pair of Hilbert spaces $(\lK_1,\lK_0)$ and a pair of boundary maps $(\Gamma_1, \Gamma_0)$ satisfying the following two conditions:
    \begin{itemize}
        \item[(BQ1)] The boundary form \eqref{eq:D} can be represented by
        \begin{equation}\label{eq:BQ}
            (\forall \vu, \vv\in \lW)\qquad \iscp{\vu}{\vv} = \scp{\Gamma_1\vu}{\Gamma_1\vv}_{\lK_1} - \scp{\Gamma_0\vu }{\Gamma_0\vv}_{\lK_0}\;.
        \end{equation}

        \item[(BQ2)] The operator $\Gamma := (\Gamma_1, \Gamma_0)^\top:\lW\to \lK_1\oplus \lK_0$ is surjective.
    \end{itemize}
Then $(\lK_1, \lK_0, \Gamma_1, \Gamma_0)$ is called a \emph{boundary quadruple} for $(T_1, \widetilde{T}_1)$. 
\end{definition}

The maps in a boundary quadruple are bounded in the graph norm, and their common kernel is the minimal space.

\begin{lemma}\label{lem:cont}
    Let $(T,\widetilde{T})$ be a joint pair of abstract Friedrichs operators in $\lH$, and $(\lK_1, \lK_0, \Gamma_1, \Gamma_0)$ be a boundary quadruple for $(T_1, \widetilde{T}_1)$. Then the following statements hold:
    \begin{itemize}
        \item[(i)] $\Gamma_0:\lW\to \lK_0$ and $\Gamma_1: \lW\to \lK_1$ are bounded with respect to the graph norm.
        \item[(ii)] $\ker\Gamma =\ker\Gamma_0\cap \ker\Gamma_1 = \lW_0$.
    \end{itemize}
\end{lemma}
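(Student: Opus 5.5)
Both parts follow from (BQ1)–(BQ2) together with the structure of the boundary form in Theorem~\ref{thm:abstractFO-prop}.vi) and ix). The plan is to prove (ii) first and then obtain (i) from a closed graph argument.

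For (ii), the inclusion $\lW_0\subseteq\ker\Gamma$ is immediate. If $\vu\in\lW_0$, then $\vu\in\lW^{[\perp]}$, so $\iscp{\vu}{\vv}=0$ for all $\vv\in\lW$. By (BQ1) this reads $\scp{\Gamma_1\vu}{\Gamma_1\vv}_{\lK_1}=\scp{\Gamma_0\vu}{\Gamma_0\vv}_{\lK_0}$ for all $\vv$. By (BQ2), we may choose $\vv$ with $\Gamma\vv=(\Gamma_1\vu,0)$, which gives $\|\Gamma_1\vu\|^2=0$. Choosing instead $\Gamma\vv=(0,\Gamma_0\vu)$ gives $\|\Gamma_0\vu\|^2=0$. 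Conversely, if $\Gamma\vu=0$, then (BQ1) gives $\iscp{\vu}{\vv}=0$ for every $\vv\in\lW$. Hence $\vu\in\lW^{[\perp]}=\lW_0$ by Theorem~\ref{thm:abstractFO-prop}.vi).

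For (i), I will use the closed graph theorem on the Hilbert space $(\lW,\|\cdot\|_{T_1})$. This requires a sequence $\vu_n\to\vu$ in $\lW$ with $\Gamma\vu_n\to(k_1,k_0)$ in $\lK_1\oplus\lK_0$, and the goal is to show $\Gamma\vu=(k_1,k_0)$. Fix $\vv\in\lW$. The form $\iscp{\cdot}{\vv}$ is continuous on $\lW$, since $D\in\mathcal L(\lW,\lW')$. Therefore (BQ1) gives
\[
\scp{\Gamma_1\vu}{\Gamma_1\vv}-\scp{\Gamma_0\vu}{\Gamma_0\vv}=\lim_n\iscp{\vu_n}{\vv}=\scp{k_1}{\Gamma_1\vv}-\scp{k_0}{\Gamma_0\vv}.
\]
By surjectivity (BQ2), the pair $(\Gamma_1\vv,\Gamma_0\vv)$ runs through all of $\lK_1\oplus\lK_0$. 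Taking $(\Gamma_1\vv,\Gamma_0\vv)=(h,0)$ and then $(0,h)$ yields $\Gamma_1\vu=k_1$ and $\Gamma_0\vu=k_0$. So $\Gamma$ has closed graph and is bounded. Boundedness of $\Gamma_1$ and $\Gamma_0$ then follows by composing with the coordinate projections.

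The only delicate point is that the $\Gamma_j$ are not assumed continuous a priori. The argument therefore must rest solely on continuity of the boundary form $\iscp{\cdot}{\cdot}$ combined with surjectivity of $\Gamma$. This is precisely what the closed graph argument above does, so no further obstacle is expected. The second claim of (ii), $\ker\Gamma=\ker\Gamma_0\cap\ker\Gamma_1$, holds by definition of $\Gamma$.
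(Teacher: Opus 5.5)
Your proof is correct and takes essentially the same route as the paper. Part (i) is a closed-graph argument resting only on continuity of $D$ and surjectivity of $\Gamma$, and part (ii) tests $\iscp{\vu}{\cdot}$ against vectors $\vv$ with prescribed boundary values and then uses $\ker D=\lW_0$; the only cosmetic differences are that you prove the parts in the opposite order and test with $(h,0)$ and $(0,h)$ separately, where the paper uses a single vector that yields a sum of squares.
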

\begin{proof}
  \begin{itemize}
      \item[(i)] By the closed graph theorem \cite[Theorem 2.9]{Brezis} it suffices to show that $\Gamma:\lW\to \lK_1\oplus \lK_0$ is closed. Let $\vu_n\to \vu$ be a convergent sequence in $\lW$ and $\Gamma\vu_n\to (\xi_1,\xi_0)$. Since, the boundary map $D:\lW\to\lW'$ is a bounded linear operator (see Theorem \ref{thm:abstractFO-prop}(vi)), for any $\vv\in \lW$
      \begin{align*}
          \scp{\xi_1}{\Gamma_1\vv}_{\lK_1}-\scp{\xi_0}{\Gamma_0\vv}_{\lK_0}&= \lim_{n\to \infty}(\scp{\Gamma_1\vu_n}{\Gamma_1\vv}_{\lK_1}-\scp{\Gamma_0\vu_n}{\Gamma_0\vv}_{\lK_0})\\
          &= \lim_{n\to\infty}\iscp{\vu_n}{\vv}\\
          &=\iscp{\vu}{\vv}\\
          &= (\scp{\Gamma_1\vu}{\Gamma_1\vv}_{\lK_1}-\scp{\Gamma_0\vu}{\Gamma_0\vv}_{\lK_0})\,,
      \end{align*}
      which leads to 
      \begin{align*}
          (\scp{\xi_1-\Gamma_1\vu}{\Gamma_1\vv}_{\lK_1}-\scp{\xi_0-\Gamma_0\vu}{\Gamma_0\vv}_{\lK_0})=0\;.
      \end{align*}
      Since $\Gamma$ is surjective and  $\vv$ is arbitrary, we can choose $\vv$ such that $\Gamma_1\vv= \xi_1-\Gamma_1\vu$ and $\Gamma_0\vv = \Gamma_0\vu -\xi_0$, which leads to
      \begin{align*}
          \|\xi_1-\Gamma_1\vu\|_{\lK_1}^2+\|\Gamma_0\vu-\xi_0\|_{\lK_0}^2=0\;.
      \end{align*}
      which implies that $\Gamma_1\vu=\xi_1$ and $\Gamma_0\vu=\xi_0$. Hence, $\Gamma\vu = (\xi_1,\xi_0)^\top$, which completes the proof.

      \item[(ii)] From Theorem \ref{thm:abstractFO-prop}(vi), we have $\ker D = \lW_0$. If $\vu\in \lW_0$, then 
      \begin{align}\label{eq:sum-d}
           0=\dup{\lW'}{D\vu}{\vv}{\lW}= \scp{\Gamma_1\vu}{\Gamma_1\vv}_{\lK_1}-\scp{\Gamma_0\vu}{\Gamma_0\vv}_{\lK_0}\,,
      \end{align}
      for all $\vv\in \lW$. Since $\Gamma$ is surjective, we can choose $\vv$ such that 
      \begin{align*}
          \Gamma_1\vv=\Gamma_1\vu\quad {\rm{and}}\quad \Gamma_0\vv=-\Gamma_0\vu\;.
      \end{align*}
      Thus, \eqref{eq:sum-d} becomes
      \begin{align*}
          \|\Gamma_1\vu\|_{\lK_1}^2+\|\Gamma_0\vu\|_{\lK_0}^2=0\,,
      \end{align*}
      which implies that $\Gamma_0\vu=\Gamma_1\vu=0$. Hence, $\lW_0\subseteq \ker\Gamma_0\cap\ker\Gamma_1$.
      
      To prove the opposite inclusion, let $\vu\in \ker\Gamma_0\cap\ker\Gamma_1$, then for any $\vv\in \lW$
      \begin{align*}
         \dup{\lW'}{D\vu}{\vv}{\lW}= \scp{\Gamma_1\vu}{\Gamma_1\vv}_{\lK_1}-\scp{\Gamma_0\vu}{\Gamma_0\vv}_{\lK_0}=0\,,
      \end{align*}
      implying that $\vu\in \ker D = \lW_0$. Hence, $\ker\Gamma_0\cap\ker\Gamma_1\subseteq\lW_0$.
  \end{itemize}
\end{proof}

We immediately notice the following result:
\begin{corollary}\label{cor:bqv}
The pair $(\ker\Gamma_0,\ker\Gamma_1)$ satisfies the \emph{(V)}-boundary conditions. Consequently,
\begin{equation}\label{eq:reference-realisations}
    T_{\rm ref}:=T_1|_{\ker\Gamma_0}
    \qquad\text{and}\qquad
    \widetilde T_{\rm ref}:=\widetilde T_1|_{\ker\Gamma_1}
\end{equation}
are bijective realisations.
\end{corollary}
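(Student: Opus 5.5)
To verify the (V)-boundary conditions for the pair $(\ker\Gamma_0,\ker\Gamma_1)$ it suffices, by Definition~\ref{dfn:V1-V2}, to check (V1) and (V2) with $\lV:=\ker\Gamma_0$ and $\widetilde\lV:=\ker\Gamma_1$. First note that both are closed subspaces of $\lW$ containing $\lW_0$, by Lemma~\ref{lem:cont}(i) (continuity of $\Gamma_0,\Gamma_1$) and Lemma~\ref{lem:cont}(ii) ($\lW_0=\ker\Gamma_0\cap\ker\Gamma_1$). Condition (V1) is immediate from (BQ1): for $\vu\in\ker\Gamma_0$ we get $\iscp{\vu}{\vu}=\|\Gamma_1\vu\|_{\lK_1}^2\geq0$, and for $\vv\in\ker\Gamma_1$ we get $\iscp{\vv}{\vv}=-\|\Gamma_0\vv\|_{\lK_0}^2\leq0$.

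The substantive step is (V2), i.e.\ $(\ker\Gamma_0)^{[\perp]}=\ker\Gamma_1$; the other identity $(\ker\Gamma_1)^{[\perp]}=\ker\Gamma_0$ then follows either by the symmetric argument or from Theorem~\ref{thm:abstractFO-prop}(vi), since $\ker\Gamma_0$ is closed and contains $\lW_0$, so $X^{[\perp][\perp]}=X$. For the inclusion $\ker\Gamma_1\subseteq(\ker\Gamma_0)^{[\perp]}$: if $\vu\in\ker\Gamma_0$ and $\vv\in\ker\Gamma_1$, then both terms in \eqref{eq:BQ} vanish. For the reverse inclusion, take $\vv\in(\ker\Gamma_0)^{[\perp]}$. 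By (BQ2) choose $\vu\in\lW$ with $\Gamma_1\vu=\Gamma_1\vv$ and $\Gamma_0\vu=0$. Then $\vu\in\ker\Gamma_0$, so $0=\iscp{\vu}{\vv}=\|\Gamma_1\vv\|^2_{\lK_1}$, hence $\vv\in\ker\Gamma_1$. The only point requiring care is this use of surjectivity of the joint map $\Gamma$, which is needed to prescribe both components independently; no other obstacle is expected.

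Finally, by Remark~\ref{rem:Vcond}, a subspace satisfying (V)-boundary conditions meets the hypotheses of Theorem~\ref{thm:abstractFO-prop}(xii). With $\lV=\ker\Gamma_0$ and $\lV^{[\perp]}=\ker\Gamma_1$, that result yields bijectivity of $T_1|_{\ker\Gamma_0}=T_{\rm ref}$ and of $\widetilde T_1|_{\ker\Gamma_1}=\widetilde T_{\rm ref}$, and moreover shows they are topological isomorphisms with signed boundary map.
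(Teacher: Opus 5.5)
Your proposal is correct and follows essentially the same route as the paper: (V1) is read off directly from (BQ1), (V2) uses surjectivity of the joint map $\Gamma$ to produce $\vu\in\ker\Gamma_0$ with prescribed $\Gamma_1\vu$ (the paper phrases this as $\Gamma_1(\ker\Gamma_0)=\lK_1$), and bijectivity follows from Theorem~\ref{thm:abstractFO-prop}(xii). Obtaining the second identity from $X^{[\perp][\perp]}=X$, which applies because $\ker\Gamma_0$ is closed and contains $\lW_0$, is a valid shortcut in place of the paper's ``completely analogous'' argument.
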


\begin{proof}
It follows immediately from \eqref{eq:BQ} that $\ker\Gamma_0$ is non-negative and $\ker\Gamma_1$ is non-positive, which proves the {\rm(V1)}-condition.
Let $\vv\in(\ker\Gamma_0)^{[\perp]}$. For every $\vu\in \ker \Gamma_0$
\begin{equation}\label{eq:bqv}
    0=\iscp{\vu}{\vv}=\scp{\Gamma_1\vu}{\Gamma_1\vv}_{\lK_1}.
\end{equation}
Surjectivity of $\Gamma$ yields $\Gamma_1(\ker\Gamma_0)=\lK_1$; indeed, for any $\xi_1\in \lK_1$, there exists $\vw\in \lW$ such that $\Gamma_1\vw=\xi_1$ and $\Gamma_0\vw=0$, proving that $\vw\in \ker\Gamma_0$ and $\xi_1\in \Gamma_1(\ker\Gamma_0)$ i.e.,$\lK_1\subseteq \Gamma_1(\ker\Gamma_0)$. The opposite inclusion is trivial, which proves the claim. 
Thus, $\Gamma_1\vv=0$, implying that $\vv\in \ker\Gamma_1$. $(\ker\Gamma_0)^{[\perp]}\subseteq \ker \Gamma_1$. The opposite inclusion directly follows from \eqref{eq:BQ}. Hence, $(\ker\Gamma_0)^{[\perp]}= \ker \Gamma_1$. The proof of $(\ker\Gamma_1)^{[\perp]}= \ker \Gamma_0$ is completely analogous. Which proves the (V2)-condition

Hence, the pair satisfies the \rm{(V)}-boundary conditions, and the bijectivity follows from Theorem~\ref{thm:abstractFO-prop}(xii).
\end{proof}

\subsection{Existence of boundary quadruples}
\begin{theorem}\label{thm:exist-bq}
    Let $(T, \widetilde{T})$ be a joint pair of abstract Friedrichs operators on $\lH$. Then there exists a boundary quadruple $(\lK_1, \lK_0, \Gamma_1, \Gamma_0)$ for $(T_1, \widetilde{T}_1)$.
\end{theorem}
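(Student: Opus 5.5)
The plan is to build the quadruple directly from the von Neumann decomposition \eqref{eq:von-decomposition} of Theorem~\ref{thm:abstractFO-prop}(ix):
\[
\lW=\lW_0\dotplus\ker T_1\dotplus\ker\widetilde T_1 .
\]
By part (ix), the form is non-negative on $\lW_0\dotplus\ker\widetilde T_1$ and non-positive on $\lW_0\dotplus\ker T_1$. The three pieces are pairwise $\iscp\cdot\cdot$-orthogonal, and \eqref{eq:char-D} gives
\[
\iscp uv=\iscp{p_{\tilde k}u}{p_{\tilde k}v}+\iscp{p_ku}{p_kv}.
\]
So I will make $(\ker\widetilde T_1,\iscp\cdot\cdot)$ and $(\ker T_1,-\iscp\cdot\cdot)$ into Hilbert spaces, take these as $\lK_1$ and $\lK_0$, and set $\Gamma_1:=p_{\tilde k}$ and $\Gamma_0:=p_k$. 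With these choices (BQ1) is exactly \eqref{eq:char-D}. (BQ2) also holds: given $(\tilde\nu,\nu)$, the element $\vu=\nu+\tilde\nu$ satisfies $\Gamma\vu=(\tilde\nu,\nu)$.

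\textbf{Main step: the forms are definite and complete.} I must show that $\iscp\cdot\cdot$ restricted to $\ker\widetilde T_1$ is a positive definite inner product whose norm is equivalent to the graph norm, and likewise for $-\iscp\cdot\cdot$ on $\ker T_1$.

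For $\tilde\nu\in\ker\widetilde T_1$ we have $\widetilde T_1\tilde\nu=0$, so
\[
\iscp{\tilde\nu}{\tilde\nu}=\scp{T_1\tilde\nu}{\tilde\nu}=\scp{(T_1+\widetilde T_1)\tilde\nu}{\tilde\nu}\ge 2\mu\|\tilde\nu\|^2,
\]
using Theorem~\ref{thm:abstractFO-prop}(v),(viii). In the other direction,
\[
\iscp{\tilde\nu}{\tilde\nu}\le \|T_1\tilde\nu\|\|\tilde\nu\| .
\]
Moreover $T_1\tilde\nu=(T_1+\widetilde T_1)\tilde\nu$ is bounded by $2\lambda\|\tilde\nu\|$, so the graph norm on $\ker\widetilde T_1$ is equivalent to the $\lH$-norm. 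Hence $\iscp{\tilde\nu}{\tilde\nu}$ is comparable to $\|\tilde\nu\|^2_{T_1}$.

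The same computation applies on $\ker T_1$:
\[
-\iscp\nu\nu=\scp{\nu}{\widetilde T_1\nu}\ge2\mu\|\nu\|^2 .
\]
Since $\ker T_1$ and $\ker\widetilde T_1$ are closed in $\lW$, both become Hilbert spaces under the respective forms, with topology equal to the graph topology.

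\textbf{Remaining checks.} Hermitian symmetry of the form is needed so that these are genuine inner products. It follows from $\scp{T_1\tilde\nu}{\tilde\eta}=\scp{(T_1+\widetilde T_1)\tilde\nu}{\tilde\eta}$ on $\ker\widetilde T_1$ together with self-adjointness of $\overline{T_0+\widetilde T_0}$. Boundedness of $\Gamma_0,\Gamma_1$ comes from the continuity of the projections in part (ix), combined with the norm equivalence just established, though Lemma~\ref{lem:cont} in fact gives this automatically.

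I expect the only real obstacle to be the norm-equivalence step. Everything else is bookkeeping with the decomposition and \eqref{eq:char-D}.
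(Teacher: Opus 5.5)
Your proposal is correct and is essentially the paper's proof: the same choice $\lK_1=(\ker\widetilde T_1,\iscp{\cdot}{\cdot})$, $\lK_0=(\ker T_1,-\iscp{\cdot}{\cdot})$, $\Gamma_1=p_{\mathrm{\tilde k}}$, $\Gamma_0=p_{\mathrm{k}}$, with (BQ1) read off from \eqref{eq:char-D} and surjectivity witnessed by $\nu+\tilde\nu$. Your $2\mu$-coercivity and norm-equivalence argument makes explicit the Hilbert-space claim that the paper simply attributes to Theorem~\ref{thm:abstractFO-prop}(ix).
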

\begin{proof}
    Set
\[
\lK_1=(\ker\widetilde T_1,\iscp{\cdot}{\cdot}),
\qquad
\lK_0=(\ker T_1,-\iscp{\cdot}{\cdot}),
\qquad
\Gamma_1=p_{\mathrm{\tilde k}},
\quad
\Gamma_0=p_{\mathrm{k}}.
\]
By Theorem~\ref{thm:abstractFO-prop}(ix), the signed forms make $\lK_1$ and $\lK_0$ Hilbert spaces and the projections are bounded. Formula~\eqref{eq:char-D} gives \eqref{eq:BQ}. Finally, for arbitrary $(\xi_1,\xi_0)^\top\in\lK_1\oplus\lK_0$, the vector $\vw=\xi_1+\xi_0\in \ker\widetilde{T}_1+\ker T_1\subseteq \lW$ satisfies $\Gamma\vw=(\xi_1,\xi_0)^\top$. Hence $\Gamma$ is surjective.
\end{proof}

The realisations of $T_1$ (analogously $\widetilde T_1$) can be parametrised by the relations in $\lK_1\oplus\lK_0$. Further, the realisations of interest can be parametrised by the operators from $\lK_1$ to $\lK_0$. Let us first start with the introduction of relations in $\lK_1\oplus \lK_0$ (we refer to the monograph (\cite{bhs24} for a comprehensive discussion on symmetric relations and the relations on the boundary spaces, while the core idea can be adapted here).

\begin{definition}
A linear relation in $\lK_1 \oplus \lK_0$ is a linear subspace
\[
 \Theta\subseteq\lK_1\oplus\lK_0\,,
\]
where domain and multivalued part are defined as
\[
 \dom\Theta
 :=\{\xi_1\in\lK_1:\ {\rm{for \ some\ \xi_0\in \lK_0,\ }} (\xi_1,\xi_0)^{\top}\in\Theta\}\ {\rm{and}}\ \mul\Theta
 :=\{\xi_0\in\lK_0:\ (0,\xi_0)^{\top}\in\Theta\}\;.
\]
An operator $\phi:\dom\phi\subseteq\lK_1\to\lK_0$ is identified with its graph
\[
 \gr\phi:=\{(\xi,\phi\xi)^{\top}:\xi\in\dom\phi\}.
\]
A linear relation $\Theta $ represents the graph of a linear operator if and only if
\[\mul \Theta = \{0\}\;.\]
Throughout this paper, we shall refer to a relation as a linear relation and an operator as a linear operator.
For a boundary relation $\Theta$, define
\begin{equation}\label{eq:VTheta}
 \lV:=\Gamma^{-1}(\Theta)
 =\{\vu\in\lW:\Gamma \vu\in\Theta\},
 \qquad T_{\Theta}:=T_1|_{ \lV}.
\end{equation}
\end{definition}

\begin{proposition}\label{prop:relation-param}
There exists a one-to-one correspondence between linear relations  $\Theta\subseteq\lK_1\oplus\lK_0$, and  linear subspaces $\mathcal V\subseteq\lW$ satisfying $\lW_0\subseteq\mathcal V$, given by
\begin{equation}
    \Theta\longmapsto \lV:=\Gamma^{-1}(\Theta), \quad {\rm{or}}\quad \lV\longmapsto \Gamma(\lV):=\Theta
\end{equation}
Moreover,
\begin{equation}\label{eq:closed-relation-V}
 \Theta\text{ is closed}
 \quad\iff \quad
 \mathcal V\text{ is closed in }\lW.
\end{equation}
\end{proposition}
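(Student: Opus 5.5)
The plan is to use only that $\Gamma:\lW\to\lK_1\oplus\lK_0$ is a bounded surjection with $\ker\Gamma=\lW_0$ (Lemma~\ref{lem:cont} and (BQ2)). First I show that the two maps are mutually inverse. For any relation $\Theta$, surjectivity of $\Gamma$ gives $\Gamma(\Gamma^{-1}(\Theta))=\Theta$. The preimage $\Gamma^{-1}(\Theta)$ is a linear subspace, and it contains $\Gamma^{-1}(\{0\})=\ker\Gamma=\lW_0$.

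Conversely, let $\lV\supseteq\lW_0$ be a subspace. The inclusion $\lV\subseteq\Gamma^{-1}(\Gamma(\lV))$ is trivial. For the reverse inclusion, take $\vu$ with $\Gamma\vu\in\Gamma(\lV)$. Then $\Gamma\vu=\Gamma\vv$ for some $\vv\in\lV$, so $\vu-\vv\in\ker\Gamma=\lW_0\subseteq\lV$, and hence $\vu\in\lV$. Since $\Gamma(\lV)$ is clearly a linear relation, this establishes the bijection.

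For the closedness equivalence, one direction is immediate. If $\Theta$ is closed, then $\lV=\Gamma^{-1}(\Theta)$ is closed, because $\Gamma$ is continuous in the graph norm by Lemma~\ref{lem:cont}(i).

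The other direction is the only nontrivial step. Assume $\lV$ is closed and contains $\lW_0$. I pass to the quotient: $\Gamma$ induces a bounded bijection $\hat\Gamma:\lW/\lW_0\to\lK_1\oplus\lK_0$, and $\lW/\lW_0$ is a Hilbert (Banach) space because $\lW_0=\ker\Gamma$ is closed. By the open mapping theorem, $\hat\Gamma$ is a topological isomorphism. Now $\lV$ is closed and saturated, i.e.\ $\lV=\lV+\lW_0$, so its image $\lV/\lW_0$ under the quotient map is closed in $\lW/\lW_0$. Hence $\Theta=\hat\Gamma(\lV/\lW_0)$ is closed.

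Alternatively, one can avoid the quotient. Take $\Gamma\vu_n\to\xi$ with $\vu_n\in\lV$. The open mapping theorem applied to $\Gamma$ gives $\vw\in\lW$ with $\Gamma\vw=\xi$, and also $\vw_n$ with $\Gamma\vw_n=\Gamma\vu_n-\xi$ and $\|\vw_n\|\le C\|\Gamma\vu_n-\xi\|\to0$. Then $\vu_n-\vw_n-\vw\in\ker\Gamma\subseteq\lV$, so $\vw+\vw_n\in\lV$. Since $\vw+\vw_n\to\vw$ and $\lV$ is closed, $\vw\in\lV$ and $\xi=\Gamma\vw\in\Theta$.

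The main point to get right is this use of open mapping together with $\lW_0\subseteq\lV$. Without saturation, the image of a closed subspace under a surjection need not be closed.
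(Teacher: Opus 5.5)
Your proof is correct and takes essentially the same route as the paper, which also passes to the induced bounded bijection $\widehat\Gamma:\lW/\lW_0\to\lK_1\oplus\lK_0$, upgrades it to a topological isomorphism via the bounded inverse theorem, and uses that a subspace containing $\lW_0$ is closed exactly when its image in the quotient is closed. The differences are only in presentation. You verify $\Gamma(\Gamma^{-1}(\Theta))=\Theta$ and $\Gamma^{-1}(\Gamma(\lV))=\lV$ explicitly, prove the easy direction by continuity of $\Gamma$, and give a quotient-free open-mapping variant; all three are valid and unpack the same idea.
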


\begin{proof}
    Let $q:\lW\to \lW/\lW_0$ be the quotient map. Since $\Gamma$ is bounded and surjective with $\ker\Gamma=\lW_0$, it induces a bounded bijection $\widehat\Gamma:\lW/\lW_0\to \lK_1\oplus\lK_0$ defined by
    \begin{equation}
         \widehat{\Gamma}(q(\vu)):=\Gamma\vu\;.
    \end{equation}
    Due to the \emph{bounded inverse theorem} \cite[Corollary 2.7]{Brezis}, $\widehat\Gamma$ is a topological isomorphism. Consequently, $\widehat\Gamma$ is a one-to-one correspondence between the linear subspaces of $\lW/\lW_0$ and the linear relations in $\lK_1\oplus\lK_0$. Hence, $\widehat\Gamma\circ q$ provides a one-to-one correspondence between the linear subspaces of $\lW$ which contain $\lW_0$ and the linear relations in $\lK_1\oplus\lK_0$, which completes the first part.

     Moreover, a subspace $\lV\subseteq \lW$ containing $\lW_0$ is closed if and only if $q(\lV)$ is closed in $\lW/\lW_0$. Since $\widehat\Gamma(q(\lV))=\Theta$, the equivalence \eqref{eq:closed-relation-V} follows.
\end{proof}

Let us now discuss when the relation $\Theta$ becomes the graph of an operator.

\begin{proposition}[When the boundary relation is an operator graph]\label{prop:graph-characterization}
Let $\mathcal V\subseteq\lW$ contain $\lW_0$, and put $\Theta=\Gamma(\mathcal V)$. Then:
\begin{enumerate}[label=\textup{(\roman*)}]
 \item $\Theta$ is the graph of an operator from a subspace of $\lK_1$ to $\lK_0$ if and only if
 \begin{equation}\label{eq:operator-intersection}
  \mathcal V\cap\ker\Gamma_1=\lW_0.
 \end{equation}
 \item $\dom\Theta=\lK_1$ if and only if
 \begin{equation}\label{eq:full-domain-sum}
  \mathcal V+\ker\Gamma_1=\lW.
 \end{equation}
\end{enumerate}
\end{proposition}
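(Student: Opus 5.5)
The plan is to use only two facts: $\Gamma$ is surjective with $\ker\Gamma=\lW_0$ (Lemma~\ref{lem:cont}(ii)), and $\lV=\Gamma^{-1}(\Theta)$ whenever $\lW_0\subseteq\lV$ (Proposition~\ref{prop:relation-param}). Both parts then reduce to elementary set-theoretic translations between $\lV$ and $\Theta$.

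For (i), recall that $\Theta$ is an operator graph if and only if $\mul\Theta=\{0\}$. First I note that
\[
\mul\Theta=\{\xi_0\in\lK_0:\ (0,\xi_0)^\top\in\Gamma(\lV)\}=\Gamma_0(\lV\cap\ker\Gamma_1).
\]
Indeed, $(0,\xi_0)^\top=\Gamma\vu$ with $\vu\in\lV$ means exactly that $\vu\in\lV\cap\ker\Gamma_1$ and $\Gamma_0\vu=\xi_0$.

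Suppose $\lV\cap\ker\Gamma_1=\lW_0$. Then $\mul\Theta=\Gamma_0(\lW_0)=\{0\}$ by Lemma~\ref{lem:cont}(ii).

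Conversely, suppose $\mul\Theta=\{0\}$ and take $\vu\in\lV\cap\ker\Gamma_1$. Then $\Gamma_1\vu=0$ and $\Gamma_0\vu\in\mul\Theta=\{0\}$, so $\vu\in\ker\Gamma=\lW_0$. The inclusion $\lW_0\subseteq\lV\cap\ker\Gamma_1$ holds trivially, since $\lW_0\subseteq\lV$ and $\lW_0\subseteq\ker\Gamma_1$.

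For (ii), observe that $\dom\Theta=\Gamma_1(\lV)$. Since $\Gamma_1(\lW)=\lK_1$ by surjectivity of $\Gamma$, the claim is equivalent to
\[
\Gamma_1(\lV)=\Gamma_1(\lW)\iff \lV+\ker\Gamma_1=\lW,
\]
which is a standard fact about a linear map.

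If $\lV+\ker\Gamma_1=\lW$, apply $\Gamma_1$ to both sides; this gives $\Gamma_1(\lW)=\Gamma_1(\lV)$.

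Conversely, if $\Gamma_1(\lV)=\lK_1$, take any $\vw\in\lW$. Choose $\vu\in\lV$ with $\Gamma_1\vu=\Gamma_1\vw$. Then $\vw-\vu\in\ker\Gamma_1$, and so $\vw\in\lV+\ker\Gamma_1$.

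No step is a real obstacle. The only point needing care is the multivalued-part computation in (i), specifically using $\ker\Gamma=\lW_0$ to conclude $\vu\in\lW_0$ rather than merely $\Gamma\vu=0$. Closedness plays no role here.
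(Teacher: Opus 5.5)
Your proof is correct and follows the paper's argument essentially step for step. In (i) you identify $\mul\Theta$ with $\Gamma_0(\lV\cap\ker\Gamma_1)$ and use $\ker\Gamma=\lW_0$; in (ii) you use $\dom\Theta=\Gamma_1(\lV)$ together with the surjectivity of $\Gamma$ to match $\Gamma_1(\lV)=\lK_1$ with $\lV+\ker\Gamma_1=\lW$. Stating these two identities up front is slightly tidier than the paper's element-by-element argument, but it is the same proof.
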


\begin{proof} \begin{itemize}
\item[(i)] A relation is a graph of an operator exactly when its multivalued part is zero. Suppose that \eqref{eq:operator-intersection} holds and $(0,\xi_0)^{\top}\in\Theta$. Since $\Gamma$ is surjective, there exists $\vu\in\mathcal V$ with $\Gamma_1\vu=0$ and $\Gamma_0\vu=\xi_0$. That is $\vu\in\mathcal V\cap\ker\Gamma_1=\lW_0$, so $\xi_0=\Gamma_0\vu=0$. Thus, $\mul\Theta=\{0\}$.

Conversely, suppose that $\mul\Theta=\{0\}$, and let $\vu\in\mathcal V\cap\ker\Gamma_1$, then $(0,\Gamma_0\vu)^{\top}\in\Theta$, which implies that $\Gamma_0\vu=0$ and $\vu\in\ker\Gamma=\lW_0$. Therefore, $\lV\cap\ker\Gamma_1\subseteq \lW_0$. The opposite inclusion is trivial.

\item[(ii)] Assume $\dom\Theta=\lK_1$. For any $\vw\in\lW$, then there exists $\vv\in\mathcal V$ with $\Gamma_1\vv=\Gamma_1\vw$ i.e.~$\vw-\vv\in\ker\Gamma_1$. Thus, $\vw=\vv+(\vw-\vv)\in \lV+\ker\Gamma_1$. The opposite inclusion is trivial.  

Conversely, suppose \eqref{eq:full-domain-sum} holds. For any $\xi_1\in\lK_1$, surjectivity of $\Gamma$ gives $\vw\in\lW$ with $\Gamma_1\vw=\xi_1$. Write $\vw=\vu+\vv$ with $\vu\in\mathcal V$ and $\vv\in\ker\Gamma_1$. It follows that $\Gamma_1\vu=\xi_1$, hence $\xi_1\in\dom\Theta$. The opposite inclusion is trivial.
\end{itemize}
\end{proof}

Now we formulate the realisations related to the operators.

\begin{theorem}\label{thm:extensions}
    Let $(T_0,\widetilde{T}_0)$ be a joint pair of closed abstract Friedrichs operators on $\lH$, and let $(\lK_1, \lK_0, \Gamma_1, \Gamma_0)$ be a boundary quadruple. Then the following assertions hold.
    \begin{itemize}
        \item[\emph{(i)}] There exists a bijective correspondence between the set of operators $\phi:\dom\phi\subseteq \lK_1\to \lK_0$ and the set of subspaces $\lV\subseteq \lW$ with $\lV\cap \ker\Gamma_1=\lW_0$. Moreover, this correspondence is given by
        \begin{equation}
            \phi \mapsto \lV_\phi:=\Gamma^{-1}(\gr \phi)=\{\vu\in \lW: \Gamma_1\vu\in \dom \phi, \Gamma_0\vu=\phi\Gamma_1\vu\}\;,
            \qquad T_\phi:=T_1|_{\lV_\phi}.
        \end{equation}
\end{itemize}

For corresponding $\phi$ and $\lV_\phi$:
\begin{itemize}
 \item[\emph{(ii)}] $\lV_\phi$ is closed if and only if $\phi$ is closed.
        \item[\emph{(iii)}] $\dom\phi=\lK_1$ if and only if $\lV_\phi + \ker\Gamma_1 = \lW$.

        \item[\emph{(iv)}] If $\phi\in\mathcal{L}(\lK_1, \lK_0)$  if and only if $\lV_\phi + \ker\Gamma_1 = \lW$.

        \item[\emph{(v)}] If $\phi$ is densely defined, then
        \begin{equation}
            T_\phi^* = \widetilde{T}_{\phi^*}\,,
        \end{equation}
        where
        \begin{equation}
           \dom  \widetilde{T}_{\phi^*}:=\{\vv\in \lW:\Gamma_0\vv\in \dom\phi^*,\, \Gamma_1\vv=\phi^*\Gamma_0\vv\}\;.
        \end{equation}
        
    \end{itemize}
\end{theorem}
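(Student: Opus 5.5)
Parts (i)--(iii) follow from Propositions \ref{prop:relation-param} and \ref{prop:graph-characterization}, and the adjoint formula in (v) is obtained from Theorem \ref{thm:abstractFO-prop}(xi) after computing $\lV_\phi^{[\perp]}$ through (BQ1) and (BQ2).

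\emph{Parts (i)--(iii).} For (i), Proposition~\ref{prop:relation-param} gives a bijection $\Theta\mapsto\Gamma^{-1}(\Theta)$ between all relations in $\lK_1\oplus\lK_0$ and all subspaces of $\lW$ containing $\lW_0$. Restricting to relations with $\mul\Theta=\{0\}$, i.e.\ graphs $\Theta=\gr\phi$, corresponds by Proposition~\ref{prop:graph-characterization}(i) exactly to subspaces with $\lV\cap\ker\Gamma_1=\lW_0$; such subspaces automatically contain $\lW_0$. The displayed description of $\lV_\phi$ is just the definition of $\Gamma^{-1}(\gr\phi)$. Part (ii) is \eqref{eq:closed-relation-V} with $\Theta=\gr\phi$, using that $\phi$ is closed if and only if its graph is closed. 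Part (iii) is Proposition~\ref{prop:graph-characterization}(ii).

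\emph{Part (iv).} I read (iv) as: $\phi$ is bounded and everywhere defined if and only if $\lV_\phi$ is closed and $\lV_\phi+\ker\Gamma_1=\lW$. The hypothesis that $\phi$ is closed, or that $\lV_\phi$ is closed, must be implicit, since an unbounded everywhere-defined $\phi$ would otherwise satisfy the sum condition. By (ii) and (iii), closedness of $\lV_\phi$ together with the sum condition means that $\phi$ is closed with $\dom\phi=\lK_1$. The closed graph theorem then gives $\phi\in\lL(\lK_1,\lK_0)$. Conversely, a bounded everywhere-defined operator is closed. I would state this reading explicitly in the proof.

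\emph{Part (v).} I first compute $\lV_\phi^{[\perp]}$. Take $\vv\in\lW$. By (BQ1), $\vv\in\lV_\phi^{[\perp]}$ if and only if
\[
\scp{\Gamma_1\vu}{\Gamma_1\vv}-\scp{\phi\Gamma_1\vu}{\Gamma_0\vv}=0 \qquad\text{for all } \vu\in\lV_\phi .
\]
By (BQ2), $\Gamma_1(\lV_\phi)=\dom\phi$, since any $(\xi,\phi\xi)$ is attained. So the condition reads $\scp{\phi\xi}{\Gamma_0\vv}=\scp{\xi}{\Gamma_1\vv}$ for all $\xi\in\dom\phi$. This says precisely that $\Gamma_0\vv\in\dom\phi^*$ and $\phi^*\Gamma_0\vv=\Gamma_1\vv$; density of $\dom\phi$ makes $\phi^*$ a well-defined operator. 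Hence $\lV_\phi^{[\perp]}=\dom\widetilde T_{\phi^*}$.

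To conclude $T_\phi^*=\widetilde T_1|_{\lV_\phi^{[\perp]}}$ I apply Theorem~\ref{thm:abstractFO-prop}(xi), which requires $\lV_\phi$ to be closed. To avoid that, I would use $T_\phi^*=(\overline{T_\phi})^*$. Boundedness of $\Gamma$ (Lemma~\ref{lem:cont}) gives $\overline{\lV_\phi}=\Gamma^{-1}(\overline{\gr\phi})$, and orthogonal complements are unchanged by closure. Then $\overline{\lV_\phi}^{[\perp]}=\lV_\phi^{[\perp]}$, and (xi) applied to $\overline{\lV_\phi}$ gives $T_\phi^*=\widetilde T_1|_{\lV_\phi^{[\perp]}}=\widetilde T_{\phi^*}$.

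I expect this last step to be the main obstacle: checking that $\overline{T_1|_{\lV_\phi}}=T_1|_{\overline{\lV_\phi}}$, with the closure taken in the graph norm. This holds because the graph norm on $\lW$ is exactly the graph norm of $T_1$.
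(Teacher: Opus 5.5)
Your proposal is correct. For (i)--(iii) you follow the paper's proof (Propositions~\ref{prop:relation-param} and~\ref{prop:graph-characterization}). Your reading of (iv) is the right one. As literally stated, the ``if'' direction fails for an everywhere-defined unbounded $\phi$ when $\lK_1$ is infinite-dimensional. The paper's one-line proof (``part (ii) and the closed graph theorem'') silently uses the closedness hypothesis that is made explicit in Remark~\ref{rem:ttilde}(iii).

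For (v) your route differs somewhat from the paper's.

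\emph{The paper's route.} It argues directly from the definition of the adjoint: for $\vv\in\lW$, $\vv\in\dom T_\phi^*$ if and only if $\iscp{\vu}{\vv}=0$ for all $\vu\in\lV_\phi$. It then does the same computation you do, using $\Gamma_1(\lV_\phi)=\dom\phi$ and density of $\dom\phi$. This needs no closedness of $\lV_\phi$. It does tacitly use $\dom T_\phi^*\subseteq\lW$, which follows from $T_0\subseteq T_\phi$ (hence $T_\phi^*\subseteq T_0^*=\widetilde T_1$), and the paper does not say so.

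\emph{Your route.} You identify $\lV_\phi^{[\perp]}$ first and hand the link between adjoint and Kre\u{\i}n-orthogonal complement to Theorem~\ref{thm:abstractFO-prop}(xi). That covers the inclusion $\dom T_\phi^*\subseteq\lW$ automatically, but it requires a closed domain. Your passage to $\overline{T_\phi}=T_1|_{\overline{\lV_\phi}}$, together with $\overline{\lV_\phi}^{[\perp]}=\lV_\phi^{[\perp]}$ (continuity of the boundary form on $\lW$), handles this correctly. The step you flag as the main obstacle is immediate, because $T_1$ is closed and the norm on $\lW$ is its graph norm. The identity $\overline{\lV_\phi}=\Gamma^{-1}(\overline{\gr\phi})$ is true but not needed.

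In short, the paper's argument is shorter, while yours is more modular and makes every inclusion explicit.
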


\begin{proof}
    Part (i) follows from the fact that the bijective correspondence is the same correspondence as in Proposition \ref{prop:relation-param}, which by Proposition \ref{prop:graph-characterization} is restricted to the relations with zero multivalued parts. Then, part (ii) follows from the equivalence \eqref{eq:closed-relation-V} and the fact that $\phi$ is closed if and only if $\gr\phi$ is closed. Moreover, part (iii) follows from Proposition \ref{prop:graph-characterization}(ii), and part (iv) follows from part (ii) and the closed graph theorem \cite[Theorem 2.9]{Brezis}.

    For part (v), let $\vv\in \lW$, then by the definition of adjoint realisations, it follows that $\vv\in \dom T^*_\phi$ if and only if 
    \begin{equation}\label{eq:adjoint}
        (\forall \vu \in \lV_\phi) \qquad \iscp{\vu}{\vv}=\scp{T_\phi\vu}{\vv}-\scp{\vu}{T_\phi^*\vv}=0\;.
    \end{equation}
For any $\vu \in \lV_\phi$, there exists $\xi_1\in \dom \phi$ such that $\Gamma\vu=(\xi_1,\phi\xi_1)^\top$, so \eqref{eq:adjoint} is equivalent to 
\begin{equation}
    (\forall \xi_1\in \dom \phi)\qquad \scp{\xi_1}{\Gamma_1\vv}_{\lK_1}-\scp{\phi\xi_1}{\Gamma_0\vv}_{\lK_0}=0\;.
\end{equation}
which by the density of $\phi$ becomes equivalent to $\Gamma_0\vv\in \dom \phi^*$ and $\Gamma_1\vv=\phi^*\Gamma_0\vv$, which concludes part (v).

\end{proof}

\begin{remark}\label{rem:ttilde}
     Since the theory of Friedrichs operators is symmetric for $T$ and $\widetilde{T}$, the analogous results of Theorem \ref{thm:extensions} hold for $\widetilde{T}$. More precisely,
     \begin{itemize}
         \item[(i)] The extensions of $\widetilde{T}$ can be parametrised by the operators from $\lK_0$ to $\lK_1$, i.e.~
         \begin{equation}
             \dom \widetilde{T}_{\psi}=\{\vv\in \lW:\Gamma_0\vv\in \dom \psi, \Gamma_1\vv=\psi\Gamma_0\vv\}\;.
         \end{equation}
         \item[(ii)] $\dom \widetilde{T}_\psi$ is closed if and only if $\psi$ is closed.
         \item[(iii)] If $\psi$ is closed, then $\psi$ is bounded if and only if $\dom \widetilde{T}_\psi+\ker\Gamma_0=\lW$.
         \item[(iv)] If $\psi$ is densely defined, then $\widetilde{T}_\psi^*=T_{\psi^*}$.
     \end{itemize}
     Where, in all cases $\dom \widetilde{T}_\psi\cap \ker\Gamma_0=\lW_0$.
\end{remark}

Now we shall prove that all boundary quadruples related to a pair of abstract Friedrichs operators can be related to each other via certain $\J-$unitary transforms. We refer to \cite[Section 2.5]{bhs24} for the analogous results related to the symmetric relations.

\subsection{Transformations of boundary quadruples}

Let $\lK:=\lK_1\oplus\lK_0$ and equip it with
\begin{equation}\label{eq:iscp-re}
    \iscp{\mxi}{\meta}_{\lK}
    :=\scp{\xi_1}{\eta_1}_{\lK_1}
      -\scp{\xi_0}{\eta_0}_{\lK_0},
    \qquad
    \mxi=(\xi_1,\xi_0)^\top,
    \quad
    \meta=(\eta_1,\eta_0)^\top.
\end{equation}
The corresponding fundamental symmetry is
\begin{equation}\label{eq:jk}
    \J_{\lK}
    :=
    \begin{pmatrix}
        \I_{\lK_1}&0\\
        0&-\I_{\lK_0}
    \end{pmatrix},
\end{equation}
so that $\iscp{\mxi}{\meta}_{\lK}=\scp{\J_{\lK}\mxi}{\meta}_{\lK}$. Thus $(\lK,\iscp{\cdot}{\cdot}_{\lK})$ is a Kre\u{\i}n space.

\begin{definition}\label{def:iso-uni}
Let $\lK'=\lK_1'\oplus\lK_0'$ be equipped with the analogous fundamental symmetry $\J_{\lK'}$. A bounded operator $U:\lK\to\lK'$ is called $\J$-unitary if
\begin{equation}\label{eq:unitary}
    U^*\J_{\lK'}U=\J_{\lK}
    \qquad\text{and}\qquad
    U\J_{\lK}U^*=\J_{\lK'}.
\end{equation}
Equivalently, $U$ is a bijective isometry of the two Kre\u{\i}n spaces. We use standard facts about $\J$-unitary operators from~\cite[Chapter~I]{Bo}.
\end{definition}

\begin{theorem}\label{thm:transformations}
Let $(\lK_1,\lK_0,\Gamma_1,\Gamma_0)$ be a boundary quadruple for $(T_1,\widetilde T_1)$.
\begin{enumerate}[label=\textup{(\roman*)}]
\item If $U:\lK\to\lK'$ is $\J$-unitary and
\begin{equation}\label{def:unitary}
    \begin{pmatrix}\Gamma_1'\\ \Gamma_0'\end{pmatrix}
    :=U\begin{pmatrix}\Gamma_1\\ \Gamma_0\end{pmatrix},
\end{equation}
then $(\lK_1',\lK_0',\Gamma_1',\Gamma_0')$ is a boundary quadruple.
\item If $(\lK_1',\lK_0',\Gamma_1',\Gamma_0')$ is another boundary quadruple for the same pair, then there is a unique $\J$-unitary operator $U:\lK\to\lK'$ satisfying \eqref{def:unitary}.
\end{enumerate}
\end{theorem}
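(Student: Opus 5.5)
For part (i), I would verify (BQ1) and (BQ2) for $\Gamma'=U\Gamma$ directly. For $\vu,\vv\in\lW$, the first identity in \eqref{eq:unitary} gives
\[
\iscp{\Gamma'\vu}{\Gamma'\vv}_{\lK'}=\scp{\J_{\lK'}U\Gamma\vu}{U\Gamma\vv}=\scp{U^*\J_{\lK'}U\Gamma\vu}{\Gamma\vv}=\iscp{\Gamma\vu}{\Gamma\vv}_{\lK}=\iscp{\vu}{\vv},
\]
and this is (BQ1) for the primed maps. For (BQ2) I need $U$ to be surjective. The second identity $U\J_{\lK}U^*\J_{\lK'}=\I$ exhibits $\J_{\lK}U^*\J_{\lK'}$ as a right inverse of $U$, so $U$ is surjective. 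Hence $\Gamma'=U\Gamma$ is surjective.

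For part (ii), I would use the induced isomorphisms $\widehat\Gamma:\lW/\lW_0\to\lK$ and $\widehat\Gamma':\lW/\lW_0\to\lK'$ from the proof of Proposition~\ref{prop:relation-param}. Both are topological isomorphisms, since Lemma~\ref{lem:cont}(ii) gives $\ker\Gamma=\ker\Gamma'=\lW_0$. I would define $U:=\widehat\Gamma'\widehat\Gamma^{-1}$. This is a bounded bijection $\lK\to\lK'$ with bounded inverse, and $U\Gamma=\Gamma'$ because $\Gamma=\widehat\Gamma\circ q$ and $\Gamma'=\widehat\Gamma'\circ q$. Uniqueness follows from surjectivity of $\Gamma$: if $U\Gamma=V\Gamma$, then $U=V$ on $\ran\Gamma=\lK$.

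It remains to show that $U$ is $\J$-unitary. For $\mxi=\Gamma\vu$ and $\meta=\Gamma\vv$, (BQ1) applied to both quadruples gives
\[
\iscp{U\mxi}{U\meta}_{\lK'}=\iscp{\Gamma'\vu}{\Gamma'\vv}_{\lK'}=\iscp{\vu}{\vv}=\iscp{\mxi}{\meta}_{\lK}.
\]
Since $\Gamma$ is surjective, this holds for all $\mxi,\meta\in\lK$, and therefore $U^*\J_{\lK'}U=\J_{\lK}$. I would derive the second identity from the first together with bijectivity. From $U^*\J_{\lK'}U=\J_{\lK}$ and $\J_{\lK}^2=\I$ we get $(\J_{\lK}U^*\J_{\lK'})U=\I_{\lK}$. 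So $\J_{\lK}U^*\J_{\lK'}$ is a left inverse of the invertible operator $U$, and hence equals $U^{-1}$. Then $U\J_{\lK}U^*\J_{\lK'}=\I_{\lK'}$, and multiplying on the right by $\J_{\lK'}$ gives $U\J_{\lK}U^*=\J_{\lK'}$.

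The main obstacle is this last point. Isometry in the indefinite form does not by itself give the second identity in \eqref{eq:unitary}; that requires bijectivity. Here bijectivity comes from the fact that both $\Gamma$ and $\Gamma'$ induce isomorphisms of $\lW/\lW_0$, so I would make sure this invertibility is established before the algebraic step.
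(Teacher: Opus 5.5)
Your proposal is correct and follows the paper's proof essentially step for step: form preservation plus surjectivity of $U\Gamma$ for (i), and $U:=\widehat\Gamma'\widehat\Gamma^{-1}$ with uniqueness from the surjectivity of $\Gamma$ for (ii). The only difference is that you explicitly derive the surjectivity of $U$ in (i) and the identity $U\J_{\lK}U^*=\J_{\lK'}$ in (ii) from bijectivity, whereas the paper simply invokes the ``bijective isometry'' characterisation stated in Definition~\ref{def:iso-uni}.
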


\begin{proof}
For \rm(i), the $\J$-unitarity of $U$ gives
\[
    \iscp{\Gamma'\vu}{\Gamma'\vv}_{\lK'}
    =\iscp{U\Gamma\vu}{U\Gamma\vv}_{\lK'}
    =\iscp{\Gamma\vu}{\Gamma\vv}_{\lK}
    =\iscp{\vu}{\vv},
\]
and the surjectivity of $\Gamma'=U\Gamma$ follows from the surjectivity of $U$ and $\Gamma$.

For \rm(ii), define $U$ by
\[
    U(\Gamma\vu):=\Gamma'\vu,
    \qquad \vu\in\lW.
\]
This is well defined because $\ker\Gamma=\lW_0=\ker\Gamma'$. It is defined on all of $\lK$, is surjective, and satisfies
\[
    \iscp{U\mxi}{U\meta}_{\lK'}
    =\iscp{\mxi}{\meta}_{\lK},
    \qquad \mxi,\meta\in\lK.
\]
Let $\widehat\Gamma:\lW/\lW_0\to\lK$ and $\widehat\Gamma':\lW/\lW_0\to\lK'$ be the topological isomorphisms induced by the two boundary maps. Then
\[
U=\widehat\Gamma'\widehat\Gamma^{-1},
\]
so $U$ and $U^{-1}$ are bounded. Since $U$ preserves the Kre\u{\i}n-space form, it is $\J$-unitary. Uniqueness follows from the surjectivity of $\Gamma$.
\end{proof}

\subsection{Effect of coordinate changes on boundary relations}\label{sec:transformations}
We show the effect of a $\J$-unitary change of coordinates on boundary relations and their realisations.
\begin{proposition}\label{prop:coord}
Suppose $\Gamma'=U\Gamma$, let $\Theta\subseteq\lK$, and set $\Theta'=U[\Theta]$. Then
\begin{equation}\label{eq:V-invariance}
 \{\vu\in\lW:\Gamma \vu\in\Theta\}
 =\{\vu\in\lW:\Gamma'\vu\in\Theta'\}.
\end{equation}
Then:
\begin{enumerate}[label={\rm{(\roman*)}}]
    \item the corresponding realisations coincide:
    \[
        T_\Theta=T'_{\Theta'}\;;
    \]
    \item $\Theta$ is closed, non-negative, non-positive, maximal
    non-negative, maximal non-positive, or hypermaximal neutral if and
    only if $\Theta'$ has the corresponding property, and
\begin{equation}\label{eq:adjoint-relation-transform}
 U[\Theta^{[\perp]}]=(\Theta')^{[\perp]}\;;
\end{equation}
    \item
    \[
        \lK=\Theta\dotplus\Gamma(\ker T_1)
        \quad\Longleftrightarrow\quad
        \lK'=\Theta'\dotplus\Gamma'(\ker T_1)\;.
    \]
\end{enumerate}
\end{proposition}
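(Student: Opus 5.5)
The whole statement reduces to two facts about $U$: it is a linear bijection of $\lK$ onto $\lK'$, and it preserves the Kre\u{\i}n forms, that is, $\iscp{U\mxi}{U\meta}_{\lK'}=\iscp{\mxi}{\meta}_{\lK}$ for all $\mxi,\meta\in\lK$. Both follow from Definition~\ref{def:iso-uni}.

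I start with the set identity \eqref{eq:V-invariance}. Since $U$ is injective, for $\vu\in\lW$ we have
\[
\Gamma\vu\in\Theta \iff U\Gamma\vu\in U[\Theta] \iff \Gamma'\vu\in\Theta'.
\]
This gives $\Gamma^{-1}(\Theta)=(\Gamma')^{-1}(\Theta')$. Both realisations are restrictions of the same operator $T_1$ to this common domain, so (i) follows at once.

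For (ii), closedness transfers because $U$ and $U^{-1}$ are bounded; boundedness of $U^{-1}$ follows from \eqref{eq:unitary}, since $U^{-1}=\J_{\lK}U^*\J_{\lK'}$. Non-negativity and non-positivity transfer because $\iscp{U\mxi}{U\mxi}_{\lK'}=\iscp{\mxi}{\mxi}_{\lK}$. Maximality transfers because $\Theta\mapsto U[\Theta]$ is an inclusion-preserving bijection between the subspaces of $\lK$ and those of $\lK'$, and it maps the non-negative (resp.\ non-positive) ones onto each other.

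For \eqref{eq:adjoint-relation-transform}, I use surjectivity of $U$: every $\meta'\in\lK'$ has the form $U\meta$. Then
\[
\meta'\in(\Theta')^{[\perp]} \iff \iscp{U\meta}{U\mxi}_{\lK'}=0 \ \ (\forall\mxi\in\Theta) \iff \meta\in\Theta^{[\perp]}.
\]
Hypermaximal neutrality, meaning $\Theta=\Theta^{[\perp]}$, then transfers by applying $U$ to both sides.

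For (iii), I note that $\Gamma'(\ker T_1)=U[\Gamma(\ker T_1)]$, directly from the definition $\Gamma'=U\Gamma$. A linear bijection maps an algebraic direct sum decomposition onto one: $U[\Theta+\Gamma(\ker T_1)]=\Theta'+\Gamma'(\ker T_1)$, and $U[\Theta\cap\Gamma(\ker T_1)]=\Theta'\cap\Gamma'(\ker T_1)$ by injectivity. Combined with $U[\lK]=\lK'$, this gives the equivalence in both directions; the converse direction uses $U^{-1}$ in the same way.

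There is no real obstacle here. The only point needing care is that \eqref{eq:unitary} indeed yields bijectivity of $U$ with bounded inverse, and this follows from the two identities in \eqref{eq:unitary}.
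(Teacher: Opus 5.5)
Your proof is correct and follows the paper's argument essentially step by step. You use the chain $\Gamma'\vu\in\Theta'\iff\Gamma\vu\in\Theta$ for the set identity and (i), boundedness and form-preservation of $U^{\pm1}$ for (ii), and $U[\Gamma(\ker T_1)]=\Gamma'(\ker T_1)$ together with bijectivity of $U$ for (iii). Your explicit formula $U^{-1}=\J_{\lK}U^*\J_{\lK'}$ and the one-line surjectivity argument for $U[\Theta^{[\perp]}]=(\Theta')^{[\perp]}$ only spell out what the paper leaves as ``apply the same argument to $U^{-1}$''.
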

\begin{proof}
For $\vu\in\lW$,
\[
    \Gamma'\vu\in\Theta'
    \quad\Longleftrightarrow\quad
    U\Gamma\vu\in U[\Theta]
    \quad\Longleftrightarrow\quad
    \Gamma\vu\in\Theta.
\]
This proves \eqref{eq:V-invariance} and part \rm(i). Since a $\J$-unitary operator and its inverse are bounded and preserve the indefinite inner product, they preserve closedness, signs and maximality. Moreover, for $\mxi\in\Theta^{[\perp]}$ and $\meta\in\Theta$,
\[
    \iscp{U\mxi}{U\meta}_{\lK'}
    =\iscp{\mxi}{\meta}_{\lK}=0.
\]
Applying the same argument to $U^{-1}$ yields \eqref{eq:adjoint-relation-transform}, and hence also preservation of hypermaximal neutrality. It also follows that
\[
    U[\Gamma(\ker T_1)]=\Gamma'(\ker T_1),
\]
so the bijectivity of $U$ preserves both the sum and its directness. This proves \rm(iii).
\end{proof}

\begin{lemma}\label{lem:graph-rep}
The following assertions are valid.
\begin{enumerate}[label=(\roman*)]
    \item A closed subspace $\Phi\subseteq\lK$ is maximal non-negative if and only if
    \[
        \Phi=\gr\phi
    \]
    for a unique non-expansive operator
    \[
        \phi\in\mathcal L(\lK_1,\lK_0).
    \]

    \item A closed subspace $\Psi\subseteq\lK$ is maximal non-positive if and only if
    \[
        \Psi
        =
        \left\{
            \begin{pmatrix}\psi\xi_0\\ \xi_0\end{pmatrix}
            :
            \xi_0\in\lK_0
        \right\}
    \]
    for a unique non-expansive operator
    \[
        \psi\in\mathcal L(\lK_0,\lK_1).
    \]
\end{enumerate}
\end{lemma}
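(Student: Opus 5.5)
We prove (i) directly; (ii) is the same argument with the roles of $\lK_1$ and $\lK_0$ exchanged, since $\Psi$ is maximal non-positive for $\iscp{\cdot}{\cdot}_{\lK}$ exactly when it is maximal non-negative for $-\iscp{\cdot}{\cdot}_{\lK}$. We use the following notion of maximality: among all non-negative subspaces of $\lK$, $\Phi$ is maximal with respect to inclusion.

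\emph{Sufficiency.} Let $\phi\in\mathcal L(\lK_1,\lK_0)$ be non-expansive.
\begin{itemize}
\item $\gr\phi$ is closed because $\phi$ is bounded.
\item It is non-negative, since
\[
\iscp{(\xi,\phi\xi)^\top}{(\xi,\phi\xi)^\top}_{\lK}=\|\xi\|^2-\|\phi\xi\|^2\ge 0 .
\]
\item It is maximal. Let $\Phi'\supseteq\gr\phi$ be non-negative and take $(\eta_1,\eta_0)^\top\in\Phi'$. Subtract $(\eta_1,\phi\eta_1)^\top\in\gr\phi$. This leaves $(0,\eta_0-\phi\eta_1)^\top\in\Phi'$, whose form value is $-\|\eta_0-\phi\eta_1\|^2$. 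Non-negativity forces this vector to vanish, so $\Phi'=\gr\phi$.
\end{itemize}

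\emph{Necessity.} Let $\Phi$ be closed and maximal non-negative.
\begin{itemize}
\item $\Phi$ is a graph. If $(0,\xi_0)^\top\in\Phi$, then $-\|\xi_0\|^2\ge0$, so $\xi_0=0$ and $\mul\Phi=\{0\}$. Hence $\Phi=\gr\phi$ for an operator $\phi$ with $\dom\phi\subseteq\lK_1$.
\item $\phi$ is non-expansive on its domain: non-negativity gives $\|\phi\xi\|\le\|\xi\|$ for every $\xi\in\dom\phi$.
\item $\dom\phi$ is closed. If $\xi_n\to\xi$ in $\dom\phi$, then $(\phi\xi_n)$ is Cauchy because $\phi$ is contractive. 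Closedness of $\Phi$ then places $\xi$ in $\dom\phi$.
\item $\dom\phi=\lK_1$. Suppose not, and let $P$ be the orthogonal projection onto the closed subspace $\dom\phi$. Define $\hat\phi:=\phi P\in\mathcal L(\lK_1,\lK_0)$, which is non-expansive. By the sufficiency step, $\gr\hat\phi$ is non-negative, and it strictly contains $\Phi$. This contradicts maximality.
\end{itemize}
Thus $\phi\in\mathcal L(\lK_1,\lK_0)$ with $\|\phi\|\le1$.

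\emph{Uniqueness.} A graph determines its operator, so $\phi$ is unique.

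The only step needing care is showing $\dom\phi=\lK_1$, which rests on the extension $\phi P$. If instead maximality is meant in the Kre\u{\i}n-space sense (no proper non-negative extension at all), the argument is unchanged.
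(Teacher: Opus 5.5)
Your proof is correct and follows the paper's argument step for step. Both use the trivial multivalued part, contractivity, closedness of $\dom\phi$, a proper non-negative extension when $\dom\phi\neq\lK_1$, and the subtraction trick for maximality of $\gr\phi$. The only cosmetic difference is in the extension step: you extend to $\gr(\phi P)$ and inherit non-negativity from your sufficiency step, whereas the paper adjoins a single vector $(\zeta_1,0)^\top$ with $\zeta_1\perp\dom\phi$ and checks non-negativity by a direct Pythagorean computation.
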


\begin{proof}
We prove~(i); the proof of~(ii) is analogous. Let $\Phi$ be maximal non-negative. Since
\[
    \begin{pmatrix}0\\ \xi_0\end{pmatrix}\in\Phi
    \quad\Longrightarrow\quad
    0\leq-\|\xi_0\|_{\lK_0}^2\,,
\]
it follows that $\mul \Phi =\{0\}$, consequently $\Phi$ is the graph
for a linear operator
\[
    \phi:\dom \phi\subseteq\lK_1\to\lK_0.
\]
Non-negativity gives
\[
    \|\phi\xi_1\|_{\lK_0}
    \leq
    \|\xi_1\|_{\lK_1},
    \qquad \xi_1\in\dom \phi.
\]
The domain of $\phi$ is closed. Indeed, if $\xi_n\in\dom\phi$ and $\xi_n\to\xi$ in $\lK_1$, contractivity makes $(\phi\xi_n)$ a Cauchy sequence. If $\phi\xi_n\to\eta$, the closedness of $\Phi=\gr\phi$ gives $(\xi,\eta)^\top\in\Phi$, and hence $\xi\in\dom\phi$. If $\dom\phi\neq\lK_1$, choose
$0\neq\zeta_1\in(\dom \phi)^{\perp_{\lK_1}}$, then
\[
    \Phi+
    \operatorname{span}
    \left\{
        \begin{pmatrix}\zeta_1\\0\end{pmatrix}
    \right\}
\]
is a proper non-negative extension of $\Phi$. Indeed, for $(\xi_1,\xi_0)^\top\in\Phi$ and $c\in\C$,
\begin{align*}
    \scp{\xi_1+c\zeta_1}{\xi_1+c\zeta_1}_{\lK_1}-\scp{\xi_0}{\xi_0}_{\lK_0}=\|\xi_1\|_{\lK_1}^2+|c|^2\|\zeta_1\|_{\lK_1}^2-\|\xi_0\|_{\lK_0}^2\geq 0\,,
\end{align*}
contradicting maximality. Thus, $\dom\phi=\lK_1$ and $\phi$ is non-expansive.

Conversely, let $\Phi=\gr\phi$ for a non-expansive
$\phi:\lK_1\to\lK_0$, and let $\mathcal M$ be a non-negative subspace containing $\Phi$. If
\[
    \begin{pmatrix}\eta_1\\\eta_0\end{pmatrix}\in\mathcal M,
\]
then
\[
    \begin{pmatrix}0\\\eta_0-\phi\eta_1\end{pmatrix}
    =
    \begin{pmatrix}\eta_1\\\eta_0\end{pmatrix}
    -
    \begin{pmatrix}\eta_1\\ \phi\eta_1\end{pmatrix}
    \in\mathcal M.
\]
Non-negativity forces $\eta_0=\phi\eta_1$. Hence $\mathcal M=\Phi$, proving maximality. Uniqueness is immediate.
\end{proof}

\begin{lemma}\label{lem:complement}
Let $\Phi\subseteq\lK$ be maximal non-negative, and let
$\Psi\subseteq\lK$ be a closed non-positive subspace such that
\[
    \lK=\Phi\dotplus\Psi.
\]
Then $\Psi$ is maximal non-positive.
\end{lemma}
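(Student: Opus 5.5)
By Lemma~\ref{lem:graph-rep}(i), the maximal non-negative subspace can be written as $\Phi=\gr\phi$ with $\phi\in\mathcal L(\lK_1,\lK_0)$ and $\|\phi\|\le 1$. The proof then reduces to showing that $\Psi$ is the graph of a bounded operator defined on all of $\lK_0$. Once this is known, non-positivity of $\Psi$ gives non-expansiveness, and the converse part of Lemma~\ref{lem:graph-rep}(ii) gives maximality.

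\emph{Step 1: $\Psi$ has trivial multivalued part with respect to $\lK_0$.} Suppose $(\eta_1,0)^\top\in\Psi$. Non-positivity gives $\|\eta_1\|_{\lK_1}^2\le 0$, so $\eta_1=0$. Hence $\Psi=\{(\psi\xi_0,\xi_0)^\top:\xi_0\in\dom\psi\}$ for a linear operator $\psi:\dom\psi\subseteq\lK_0\to\lK_1$. Non-positivity gives $\|\psi\xi_0\|_{\lK_1}\le\|\xi_0\|_{\lK_0}$ on $\dom\psi$.

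\emph{Step 2: $\dom\psi=\lK_0$.} Let $\xi_0\in\lK_0$. By the decomposition $\lK=\Phi\dotplus\Psi$ we can write
\[
(0,\xi_0)^\top=(\xi_1,\phi\xi_1)^\top+(\psi\eta_0,\eta_0)^\top ,
\]
with $\xi_1\in\lK_1$ and $\eta_0\in\dom\psi$. Thus $\xi_1=-\psi\eta_0$ and $\xi_0=\eta_0-\phi\psi\eta_0=(\I_{\lK_0}-\phi\psi)\eta_0$. This shows only that $\ran(\I-\phi\psi)=\lK_0$, not that $\xi_0\in\dom\psi$ directly. This is the main obstacle, since $\I-\phi\psi$ need not be invertible when both operators are merely contractive.

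To get around it, I would use the decomposition with $(\zeta_1,0)^\top$ as well. In general, $\Psi$ must project onto $\lK/\Phi$, and $\lK/\Phi\cong\lK_0$ via the map $(\xi_1,\xi_0)\mapsto\xi_0-\phi\xi_1$. Restricted to $\Psi$, this map is $\eta_0\mapsto(\I-\phi\psi)\eta_0$, and directness of the sum makes it injective. So $\I-\phi\psi:\dom\psi\to\lK_0$ is a bijection.

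I would then exploit maximality of $\Phi$ through the closed-graph and completeness argument of Lemma~\ref{lem:graph-rep}. Since $\psi$ is contractive and $\Psi$ is closed, $\dom\psi$ is closed, exactly as in the proof of that lemma. If $\dom\psi\neq\lK_0$, pick $0\ne\zeta_0\perp\dom\psi$. Decompose $(0,\zeta_0)^\top=(\xi_1,\phi\xi_1)^\top+(\psi\eta_0,\eta_0)^\top$ and compute
\[
\|\zeta_0\|^2=\scp{\zeta_0}{\phi\xi_1+\eta_0}=\scp{\zeta_0}{\phi\xi_1} \le \|\zeta_0\|\,\|\xi_1\|.
\]
Since $\xi_1=-\psi\eta_0$ and $\|\psi\eta_0\|\le\|\eta_0\|$, this gives $\|\zeta_0\|\le\|\eta_0\|$. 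It remains to derive a contradiction from these inequalities. I would try using the Kre\u{\i}n form on the decomposition to do so: the $\Phi$-part is non-negative, the $\Psi$-part is non-positive, and $(0,\zeta_0)$ is negative. If this fails, I would fall back on the standard Kre\u{\i}n-space fact from \cite[Chapter~I]{Bo}: a non-positive subspace that is a direct complement of a maximal non-negative subspace is maximal non-positive. This holds because the angular operators then satisfy that $\I-\phi\psi$ is closed with closed range and is onto, which forces $\dom\psi=\lK_0$ via a continuity argument in $\psi$.
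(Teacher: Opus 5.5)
Your reduction to angular operators is sound as far as it goes. Step~1 is correct, $\dom\psi$ is closed because $\Psi$ is closed and $\psi$ is contractive, and once $\dom\psi=\lK_0$ the converse half of Lemma~\ref{lem:graph-rep}(ii) finishes the proof. The gap is that Step~2, which carries the whole content of the lemma, is never carried out:
\begin{itemize}
\item The inequalities you reach, $\|\zeta_0\|\le\|\xi_1\|\le\|\eta_0\|$, are mutually consistent and give no contradiction.
\item The suggested use of the Kre\u{\i}n form on $(0,\zeta_0)^\top=\mxi_\Phi+\mxi_\Psi$ does not work as stated, because the cross term $2\Re\iscp{\mxi_\Phi}{\mxi_\Psi}_{\lK}$ has no sign.
\item The fallback ``standard fact'' is literally the statement you are asked to prove. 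Its only support is a vague appeal to the closed range of $\I-\phi\psi$ and ``a continuity argument in $\psi$''. You had already shown that $(\I-\phi\psi)|_{\dom\psi}$ maps onto $\lK_0$, and that alone says nothing about whether $\dom\psi$ is all of $\lK_0$.
\end{itemize}

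The missing idea is to use $\zeta_0\perp\eta_0$ on the second component directly, not only inside $\scp{\zeta_0}{\cdot}$. From your decomposition, $\phi\xi_1=\zeta_0-\eta_0$ with $\eta_0\in\dom\psi\perp\zeta_0$, so by Pythagoras
\[
    \|\zeta_0\|^2+\|\eta_0\|^2=\|\phi\xi_1\|^2\le\|\xi_1\|^2=\|\psi\eta_0\|^2\le\|\eta_0\|^2.
\]
This forces $\zeta_0=0$. With this line your route is complete, and it has the bonus of producing the angular operator $\psi$ explicitly.

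The paper's proof is shorter and avoids angular operators for $\Psi$ entirely. Take a non-positive $\Psi_1\supseteq\Psi$ and $\mxi\in\Psi_1$, and split $\mxi=\mxi_\Phi+\mxi_\Psi$. Then $\mxi_\Phi=\mxi-\mxi_\Psi\in\Phi\cap\Psi_1$ is neutral. By Cauchy--Schwarz in semidefinite subspaces it is $[\perp]$-orthogonal to both $\Phi$ and $\Psi_1$, hence to $\Phi+\Psi_1\supseteq\lK$. Non-degeneracy gives $\mxi_\Phi=0$, so $\Psi_1=\Psi$. That argument uses only the signs of $\Phi$ and $\Psi_1$ and the non-degeneracy of the form. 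The graph representation of $\Phi$ and the closedness of $\Psi$ play no role there.
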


\begin{proof}
Set $\Phi=\gr\phi$ with $\phi\in\mathcal L(\lK_1,\lK_0)$ non-expansive, as in Lemma~\ref{lem:graph-rep}. Let $\Psi_1$ be a non-positive subspace containing $\Psi$, and take $\mxi\in\Psi_1$. Write uniquely
\[
    \mxi=\mxi_\Phi+\mxi_\Psi,
    \qquad
    \mxi_\Phi\in\Phi,
    \quad
    \mxi_\Psi\in\Psi.
\]
Since $\mxi,\mxi_\Psi\in\Psi_1$, also $\mxi_\Phi=\mxi-\mxi_\Psi\in\Psi_1$. Thus $\mxi_\Phi$ belongs to the non-negative subspace $\Phi$ and to the non-positive subspace $\Psi_1$, so
\[
    \iscp{\mxi_\Phi}{\mxi_\Phi}_{\lK}=0.
\]
A neutral vector in a semidefinite subspace is orthogonal to that subspace: indeed, for any $\meta\in \Phi$ or $\meta\in \Psi_1$, we apply the sign inequality to $\mxi_\Phi+t\meta$ for real $t$, and then to $\mxi_\Phi+it\meta$, to get $\iscp{\mxi_\Phi}{\meta}=0$. Hence $\mxi_\Phi$ is orthogonal to $\Phi$ and to $\Psi_1$, and therefore it is also orthogonal to $\lK=\Phi+\Psi\subseteq \Phi+\Psi_1$. Since the Kre\u{\i}n-space form is non-degenerate, $\mxi_\Phi=0$. Consequently $\mxi=\mxi_\Psi\in\Psi$, and $\Psi_1=\Psi$. Therefore $\Psi$ is maximal non-positive.
\end{proof}

\section{Classification of realisations}\label{sec:extensions}

First we prove the following intrinsic criterion for bijective realisations.
\begin{theorem}\label{thm:intrinsic-bijectivity}
Let $\Theta\subseteq\lK:=\lK_1\oplus\lK_0$ be a closed linear relation and let
\[
    \lV_\Theta:=\Gamma^{-1}(\Theta)\;.
\]
Then the following assertions are equivalent:
\begin{itemize}
    \item[\rm(i)] $\lW=\lV_\Theta\dotplus\ker T_1$;

    \item[\rm(ii)] 
    \refstepcounter{equation}\label{eq:intrinsic-boundary-sum}%
    $\lK=\Theta\dotplus\Gamma(\ker T_1)$.
    \hfill\textup{(\theequation)}
\end{itemize}

\end{theorem}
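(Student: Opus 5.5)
The plan is to transfer the decomposition through the boundary map $\Gamma$, using two facts: $\ker\Gamma=\lW_0\subseteq\lV_\Theta$ (Lemma~\ref{lem:cont}(ii)), and $\Gamma$ is surjective (BQ2). By Proposition~\ref{prop:relation-param} we also have $\Gamma(\lV_\Theta)=\Theta$. Closedness of $\Theta$ plays no role in the algebraic equivalence itself; it only makes $\lV_\Theta$ closed, so that Theorem~\ref{thm:abstractFO-prop}(xiii) later turns (i) into bijectivity of $T_\Theta$.

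\emph{Sum part.} First assume $\lW=\lV_\Theta+\ker T_1$. Applying $\Gamma$ and using surjectivity gives
\[
\lK=\Gamma(\lW)=\Gamma(\lV_\Theta)+\Gamma(\ker T_1)=\Theta+\Gamma(\ker T_1).
\]
Conversely, assume $\lK=\Theta+\Gamma(\ker T_1)$ and let $\vw\in\lW$. Write $\Gamma\vw=\mxi+\Gamma\vv$ with $\mxi\in\Theta$ and $\vv\in\ker T_1$. Then $\Gamma(\vw-\vv)=\mxi\in\Theta$, so $\vw-\vv\in\lV_\Theta$, and therefore $\vw=(\vw-\vv)+\vv\in\lV_\Theta+\ker T_1$.

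\emph{Directness part.} This is the step needing care: we must show $\lV_\Theta\cap\ker T_1=\{0\}$ if and only if $\Theta\cap\Gamma(\ker T_1)=\{0\}$. The key auxiliary fact is that $\Gamma|_{\ker T_1}$ is injective. This holds because $\ker T_1\cap\lW_0=\{0\}$, since the von Neumann decomposition \eqref{eq:von-decomposition} is direct.
\begin{itemize}
\item[] Suppose $\Theta\cap\Gamma(\ker T_1)=\{0\}$ and take $\vu\in\lV_\Theta\cap\ker T_1$. Then $\Gamma\vu\in\Theta\cap\Gamma(\ker T_1)$, so $\Gamma\vu=0$. Hence $\vu\in\lW_0\cap\ker T_1=\{0\}$.
\item[] Suppose $\lV_\Theta\cap\ker T_1=\{0\}$ and take $\mxi\in\Theta\cap\Gamma(\ker T_1)$. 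Write $\mxi=\Gamma\vv$ with $\vv\in\ker T_1$. Then $\Gamma\vv\in\Theta$, so $\vv\in\lV_\Theta\cap\ker T_1=\{0\}$, and hence $\mxi=0$.
\end{itemize}
Combining the sum and directness parts gives (i)$\iff$(ii).
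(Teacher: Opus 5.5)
Your proof is correct and follows essentially the same route as the paper. It pushes the decomposition forward with $\Gamma$ (using surjectivity and $\Gamma(\lV_\Theta)=\Theta$), pulls it back (using $\ker\Gamma=\lW_0\subseteq\lV_\Theta$), and gets directness from $\lW_0\cap\ker T_1=\{0\}$; your lifting step $\Gamma(\vw-\vv)=\mxi\in\Theta\Rightarrow \vw-\vv\in\lV_\Theta$ is only a slightly more direct version of the paper's step, which first picks a preimage $\vu\in\lV_\Theta$ of $\mxi$ and then corrects by an element of $\lW_0$.
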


\begin{proof} Since $\Theta$ is a linear relation, $0\in \Theta$ and hence $\lW_0=\ker\Gamma\subseteq \lV_\Theta$. Moreover, $\Gamma(\lV_\Theta)=\Theta$ by the surjectivity of $\Gamma$.

Suppose first that (i) holds. Let $\mxi\in \lK$, the due to surjectivity of $\Gamma$ there exists $\vw\in \lW$ such that $\Gamma\vw=\mxi$. By (i), there exist $\vu\in \lV_\Theta$ and $\vv\in \ker T_1$ such that 
\begin{align*}
    \vw = \vu + \vv\,,
\end{align*}
it follows that 
\begin{align*}
    \mxi = \Gamma \vu +\Gamma \vv \in \Theta + \Gamma(\ker T_1)\;.
\end{align*}
The opposite inclusion is trivial. Now, to prove that the sum is direct, let $\mxi\in \Theta \cap  \Gamma(\ker T_1)$. Since $\Gamma(\lV_\Theta)=\Theta$, there exists $\vu\in \lV_\Theta$ such that $\Gamma\vu=\mxi$. Also, there exists $\vv\in \ker T_1$ such that $\Gamma \vv = \mxi$. Then
\begin{align*}
    \vu-\vv\in \ker\Gamma=\lW_0\subseteq \lV_\Theta\,,
\end{align*}
which implies that $\vv\in \lV_\Theta$. Thus
\begin{align*}
    \vv \in \lV_\Theta\cap \ker T_1=\{0\}\,,
\end{align*}
the equality is due to the assumption. Hence, $\mxi=\Gamma\vv=0$, proving (ii).

Conversely, assume (ii). Let $\vw\in \lW$, then there exist $\mxi \in \Theta$ and $\vv\in \ker T_1$ such that 
\begin{align*}
    \Gamma\vw = \mxi +\Gamma\vv\;.
\end{align*}
Since $\Gamma(\lV_\Theta)=\Theta$, there exists $\vu\in \lV_\Theta$ such that $\Gamma\vu=\mxi$. Then $\vw-\vu-\vv\in \ker\Gamma=\lW_0\subseteq \lV_\Theta$, and therefore
\begin{align*}
    \vw=(\vu+\vw-\vu-\vv)+\vv\in \lV_\Theta +\ker T_1\;.
\end{align*}
The opposite inclusion is trivial. It remains to prove that the sum is direct. Let $\vv\in\lV_\Theta\cap \ker T_1$, then
\begin{align*}
    \Gamma\vv\in \Theta\cap \Gamma(\ker T_1)=\{0\}\;.
\end{align*}
Thus, $\vv\in \ker\Gamma\cap\ker T_1=\lW_0\cap\ker T_1=\{0\}$, which proves (i).

\end{proof}

\begin{corollary}\label{cor:classification}
Let $\lV\subseteq\lW$ be a closed subspace containing $\lW_0$, and put $\Theta:=\Gamma(\lV)$. Then
\[
    T_1|_{\lV}\text{ is bijective}
    \quad\Longleftrightarrow\quad
    \lK=\Theta\dotplus\Gamma(\ker T_1).
\]
\end{corollary}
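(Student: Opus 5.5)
The corollary follows by combining Theorem~\ref{thm:abstractFO-prop}(xiii) with Theorem~\ref{thm:intrinsic-bijectivity}; the only extra step is identifying $\lV$ with $\lV_\Theta$.

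First, since $\lV$ contains $\lW_0$, Proposition~\ref{prop:relation-param} shows that the map $\lV\mapsto\Gamma(\lV)$ is a bijection onto linear relations in $\lK$, with inverse $\Theta\mapsto\Gamma^{-1}(\Theta)$. Hence $\Gamma^{-1}(\Theta)=\Gamma^{-1}(\Gamma(\lV))=\lV$, so $\lV=\lV_\Theta$. This can also be checked directly. If $\vu\in\lW$ satisfies $\Gamma\vu=\Gamma\vv$ for some $\vv\in\lV$, then $\vu-\vv\in\ker\Gamma=\lW_0\subseteq\lV$ by Lemma~\ref{lem:cont}(ii), so $\vu\in\lV$. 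Moreover, $\lV$ is closed, so the equivalence \eqref{eq:closed-relation-V} shows that $\Theta$ is a closed relation. The hypotheses of Theorem~\ref{thm:intrinsic-bijectivity} are therefore satisfied.

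Next, $\lV$ is a closed subspace of $\lW$ containing $\lW_0$, so Theorem~\ref{thm:abstractFO-prop}(xiii) gives
\[
T_1|_{\lV}\ \text{bijective}\iff \lW=\lV\dotplus\ker T_1 .
\]
Since $\lV=\lV_\Theta$, the right-hand side is condition (i) of Theorem~\ref{thm:intrinsic-bijectivity}. That theorem shows (i) is equivalent to $\lK=\Theta\dotplus\Gamma(\ker T_1)$, and chaining the two equivalences proves the corollary.

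No step here is a genuine obstacle. The one point that needs care is the identity $\Gamma^{-1}(\Gamma(\lV))=\lV$, and it holds only because $\lW_0\subseteq\lV$.
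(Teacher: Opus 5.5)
Your proposal is correct and follows the paper's proof, which simply chains Theorem~\ref{thm:abstractFO-prop}(xiii) with Theorem~\ref{thm:intrinsic-bijectivity}. You also spell out two points the paper leaves implicit: the identification $\lV=\Gamma^{-1}(\Theta)$, which holds because $\ker\Gamma=\lW_0\subseteq\lV$, and the closedness of $\Theta$.
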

\begin{proof}
    The proof follows from Theorem~\ref{thm:abstractFO-prop}(xiii) and Theorem \ref{thm:intrinsic-bijectivity} 
\end{proof}

We shall use the reference operators from Corollary \ref{cor:bqv} to classify the desired realisations.

\begin{proposition}\label{prop:zero-reference-map} Set
\begin{equation}\label{eq:B0-def}
    B_0:=\Gamma_0|_{\ker T_1}:\ker T_1\to\lK_0.
\end{equation}
Then the following assertions hold.
\begin{itemize}
    \item[{\rm(i)}] $B_0:\ker T_1\to \lK_0$ is bounded and bijective;
    \item[{\rm(ii)}]  The operator
\begin{equation}\label{eq:Q0-def}
    Q_0:=\Gamma_1B_0^{-1}:\lK_0\to\lK_1
\end{equation}
is bounded and satisfies
\begin{equation}\label{eq:Q0-strict}
    \|Q_0\|<1.
\end{equation}
\end{itemize}
\end{proposition}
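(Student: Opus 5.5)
The plan is to get part (i) from the bijectivity of the reference realisation $T_{\rm ref}$ in Corollary~\ref{cor:bqv}, and part (ii) from a quantitative coercivity estimate on $\ker T_1$ that follows from (T3).

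\emph{Part (i).} Boundedness of $B_0$ is immediate from Lemma~\ref{lem:cont}(i), since $\ker T_1$ is a closed subspace of the graph space $\lW$. Corollary~\ref{cor:bqv} says that $T_{\rm ref}=T_1|_{\ker\Gamma_0}$ is bijective. Since $\ker\Gamma_0$ is closed and contains $\lW_0$, Theorem~\ref{thm:abstractFO-prop}(xiii) then gives $\lW=\ker\Gamma_0\dotplus\ker T_1$.
\begin{itemize}
\item Injectivity: if $\vv\in\ker T_1$ and $\Gamma_0\vv=0$, then $\vv\in\ker\Gamma_0\cap\ker T_1=\{0\}$.
\item Surjectivity: given $\xi_0\in\lK_0$, use (BQ2) to pick $\vw\in\lW$ with $\Gamma\vw=(0,\xi_0)^\top$. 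Decompose $\vw=\vu+\vv$ with $\vu\in\ker\Gamma_0$ and $\vv\in\ker T_1$. Then $B_0\vv=\Gamma_0\vw=\xi_0$.
\end{itemize}
Since $\ker T_1$ is closed in $\lW$, it is a Hilbert space, so the bounded inverse theorem makes $B_0^{-1}$ bounded.

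\emph{Part (ii), boundedness.} $Q_0=\Gamma_1B_0^{-1}$ is a composition of bounded maps, hence bounded.

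\emph{Part (ii), the weak bound $\|Q_0\|\le 1$.} By Theorem~\ref{thm:abstractFO-prop}(ix) we have $\ker T_1\subseteq\lW^-$. Combined with \eqref{eq:BQ}, this gives $\|\Gamma_1\vv\|^2\le\|\Gamma_0\vv\|^2$ for $\vv\in\ker T_1$, i.e.\ $\|Q_0\|\le1$.

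\emph{Part (ii), strictness.} This is the main obstacle. I will make the negativity of the boundary form on $\ker T_1$ quantitative. For $\vv\in\ker T_1$, definition \eqref{eq:D1} and Theorem~\ref{thm:abstractFO-prop}(v),(viii) give
\[
\iscp{\vv}{\vv}=-\scp{\vv}{\widetilde T_1\vv}=-\scp{\vv}{(T_1+\widetilde T_1)\vv}\le -2\mu\|\vv\|^2 .
\]
On $\ker T_1$ the graph norm coincides with the $\lH$-norm, because $T_1\vv=0$. So Lemma~\ref{lem:cont}(i) yields a constant $C>0$ with $\|\Gamma_0\vv\|_{\lK_0}\le C\|\vv\|$. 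We may assume $C^2>2\mu$ by enlarging $C$. Inserting both facts into \eqref{eq:BQ}:
\[
\|\Gamma_0\vv\|^2-\|\Gamma_1\vv\|^2\ \ge\ 2\mu\|\vv\|^2\ \ge\ \tfrac{2\mu}{C^2}\|\Gamma_0\vv\|^2 .
\]
Writing $\vv=B_0^{-1}\xi_0$, this reads $\|Q_0\xi_0\|^2\le\bigl(1-\tfrac{2\mu}{C^2}\bigr)\|\xi_0\|^2$ for all $\xi_0\in\lK_0$. Hence $\|Q_0\|\le(1-2\mu/C^2)^{1/2}<1$, which is \eqref{eq:Q0-strict}.
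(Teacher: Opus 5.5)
Your proposal is correct and matches the paper's proof essentially step for step. Bijectivity of $B_0$ comes from the decomposition $\lW=\ker\Gamma_0\dotplus\ker T_1$ given by Corollary~\ref{cor:bqv} and Theorem~\ref{thm:abstractFO-prop}(xiii), and the strict bound comes from the coercivity estimate $\|\Gamma_0\vv\|^2-\|\Gamma_1\vv\|^2\geq 2\mu\|\vv\|^2$ on $\ker T_1$ combined with $\|\Gamma_0\vv\|\leq C\|\vv\|$. Enlarging $C$ so that $C^2>2\mu$ replaces the paper's separate treatment of $\ker T_1=\{0\}$ and its remark that $C_0^2\geq 2\mu$, and your intermediate bound $\|Q_0\|\leq 1$ is harmless but not needed.
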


\begin{proof}
By Corollary~\ref{cor:bqv} and Theorem~\ref{thm:abstractFO-prop}(xiii),
\[
    \lW=\ker\Gamma_0\dotplus\ker T_1.
\]
Now, let $\vv\in \ker T_1$ such that $B_0\vv=0$, then $\Gamma_0\vv=0$; which implies that $\vv\in \ker T_1\cap \ker\Gamma_0=\{0\}$. Thus, $\vv=0$, proving that $B_0$ is injective.
To prove surjectivity, let $\xi_0\in\lK_0$. By the surjectivity of $\Gamma$, choose $\vw\in\lW$ such that
\[
    \Gamma_1\vw=0,
    \qquad
    \Gamma_0\vw=\xi_0.
\]
Write $\vw=\vu+\vv$ with $\vu\in\ker\Gamma_0$ and $\vv\in\ker T_1$. Then $B_0\vv=\xi_0$. Thus $B_0$ is bijective. It is bounded by Lemma~\ref{lem:cont}, and its inverse is bounded by the bounded inverse theorem; cf.~\cite[Corollary~2.7]{Brezis}. Since $\Gamma_1$ is also bounded (see Lemma \ref{lem:cont}(i)), it follows $Q_0$ is bounded.

It remains to prove \eqref{eq:Q0-strict}. If $\ker T_1=\{0\}$, then $\lK_0=\{0\}$ and the assertion is trivial. Otherwise, let $C_0$ denotes the norm of the bounded restriction $\Gamma_0|_{\ker T_1}$, where $\ker T_1$ is equipped with the norm of $\lH$ (in $\ker T_1$ we have $\|\cdot\|_\lW=\|\cdot\|_\lH$). For $\vv\in\ker T_1$, Theorem~\ref{thm:abstractFO-prop}(v), (viii) and \eqref{eq:BQ} imply
\begin{align*}
    \|\Gamma_0\vv\|_{\lK_0}^2-\|\Gamma_1\vv\|_{\lK_1}^2=-\iscp{\vv}{\vv}=\scp{\vv}{\widetilde T_1\vv}=\scp{\vv}{(T_1+\widetilde T_1)\vv}\geq 2\mu\|\vv\|_{\lH}^2\;.
\end{align*}
Since $\|\Gamma_0\vv\|_{\lK_0}\leq C_0\|\vv\|_{\lH}$, we obtain
\[
    \|\Gamma_1\vv\|_{\lK_1}^2
    \leq
    \left(1-\frac{2\mu}{C_0^2}\right)
    \|\Gamma_0\vv\|_{\lK_0}^2.
\]
The preceding estimate also implies $C_0^2\geq2\mu$. Set
\[
    q_0:=\left(1-\frac{2\mu}{C_0^2}\right)^{1/2}\in[0,1).
\]
Since $\vv\in\ker T_1$ is arbitrary and $B_0:\ker T_1\to\lK_0$ is bijective, applying the estimate to $\vv=B_0^{-1}\xi_0$ gives
\[
    \|Q_0\xi_0\|_{\lK_1}\leq q_0\|\xi_0\|_{\lK_0},
    \qquad \xi_0\in\lK_0,
\]
and hence \eqref{eq:Q0-strict}.
\end{proof}

We now obtain the criterion of bijectivity of the realisations in terms of the parameters related the boundary quadruple.

\begin{theorem}\label{thm:Q0-criterion}
Let $\phi\in\mathcal L(\lK_1,\lK_0)$ and put
\[
    \lV_\phi=\ker(\Gamma_0-\phi\Gamma_1),
    \qquad
    T_\phi=T_1|_{\lV_\phi}.
\]
Then the following assertions are equivalent:
\begin{itemize}
    \item[\rm(i)] $T_\phi$ is bijective;
    \item[\rm(ii)] the operator
    \begin{equation}\label{eq:Bphi-def}
        B_\phi:=\left.(\Gamma_0-\phi\Gamma_1)\right|_{\ker T_1}:
        \ker T_1\to\lK_0
    \end{equation}
    is bijective;
    \item[\rm(iii)] the operator
    \begin{equation}\label{eq:Q0-admissibility}
        \I_{\lK_0}-\phi Q_0:\lK_0\to\lK_0
    \end{equation}
    is bijective.
\end{itemize}
Moreover,
\begin{equation}\label{eq:Bphi-factorisation}
    B_\phi=(\I_{\lK_0}-\phi Q_0)B_0.
\end{equation}
\end{theorem}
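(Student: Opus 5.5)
I would first establish the factorisation \eqref{eq:Bphi-factorisation}, since everything else reduces to it. For $\vv\in\ker T_1$ put $\xi_0:=B_0\vv=\Gamma_0\vv$. Then $B_0^{-1}\xi_0=\vv$, so by the definition \eqref{eq:Q0-def} we have $Q_0\Gamma_0\vv=\Gamma_1\vv$. This gives
\[
    B_\phi\vv=\Gamma_0\vv-\phi\Gamma_1\vv=(\I_{\lK_0}-\phi Q_0)\Gamma_0\vv=(\I_{\lK_0}-\phi Q_0)B_0\vv .
\]
By Proposition~\ref{prop:zero-reference-map}(i), $B_0$ is a bijection from $\ker T_1$ onto $\lK_0$. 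Hence $B_\phi$ is bijective if and only if $\I_{\lK_0}-\phi Q_0$ is bijective, which is the equivalence (ii)$\Leftrightarrow$(iii).

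For (i)$\Leftrightarrow$(ii), I would use Theorem~\ref{thm:abstractFO-prop}(xiii).
\begin{itemize}
    \item Since $\phi$ is bounded, $\Gamma_0-\phi\Gamma_1$ is bounded on $\lW$ by Lemma~\ref{lem:cont}(i). So $\lV_\phi=\ker(\Gamma_0-\phi\Gamma_1)$ is closed, and it contains $\lW_0=\ker\Gamma$.
    \item By (xiii), $T_\phi$ is bijective if and only if $\lW=\lV_\phi\dotplus\ker T_1$.
\end{itemize}
It remains to show that this decomposition holds exactly when $B_\phi$ is bijective.

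\emph{Injectivity.} $\ker B_\phi=\lV_\phi\cap\ker T_1$, so injectivity of $B_\phi$ is exactly directness of the sum.

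\emph{Surjectivity.} The map $L:=\Gamma_0-\phi\Gamma_1:\lW\to\lK_0$ is surjective: given $\xi_0$, surjectivity of $\Gamma$ yields $\vw$ with $\Gamma_1\vw=0$ and $\Gamma_0\vw=\xi_0$, so $L\vw=\xi_0$.
\begin{itemize}
    \item Assume $B_\phi$ is surjective and let $\vw\in\lW$. Choose $\vv\in\ker T_1$ with $B_\phi\vv=L\vw$. Then $\vw-\vv\in\ker L=\lV_\phi$, so $\vw\in\lV_\phi+\ker T_1$.
    \item Conversely, assume $\lW=\lV_\phi+\ker T_1$ and let $\xi_0\in\lK_0$. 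Pick $\vw$ with $L\vw=\xi_0$ and write $\vw=\vu+\vv$ with $\vu\in\lV_\phi$ and $\vv\in\ker T_1$. Then $B_\phi\vv=L\vw=\xi_0$.
\end{itemize}
This proves (i)$\Leftrightarrow$(ii).

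An alternative route to (i)$\Leftrightarrow$(ii) is Theorem~\ref{thm:intrinsic-bijectivity} with $\Theta=\gr\phi$, but the direct argument above avoids that detour. I do not expect a genuine obstacle. The only point needing care is the surjectivity of $L$, which is where (BQ2) enters. Note also that $\|Q_0\|<1$ is not needed for this equivalence; it only becomes relevant for corollaries, for instance that non-expansive $\phi$ make $\I_{\lK_0}-\phi Q_0$ invertible by a Neumann series.
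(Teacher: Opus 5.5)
Your proposal is correct and follows essentially the same route as the paper. Both reduce (i)$\Leftrightarrow$(ii) to the decomposition $\lW=\lV_\phi\dotplus\ker T_1$ via Theorem~\ref{thm:abstractFO-prop}(xiii), using $\ker B_\phi=\lV_\phi\cap\ker T_1$ and the surjectivity of $\Gamma_0-\phi\Gamma_1$, and both obtain (ii)$\Leftrightarrow$(iii) from $B_\phi=(\I_{\lK_0}-\phi Q_0)B_0$ and the bijectivity of $B_0$. Your explicit check that $\lV_\phi$ is closed and contains $\lW_0$, which is needed to invoke (xiii), is a small point the paper leaves implicit.
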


\begin{proof} To prove the equivalence between (i) and (ii), by Theorem~\ref{thm:abstractFO-prop}(xiii)  it is enough to prove that
\[
    B_\phi\text{ is bijective}
    \quad\Longleftrightarrow\quad
    \lW=\lV_\phi\dotplus\ker T_1.
\]
Since $\Gamma$ is surjective, $\Gamma_0-\phi\Gamma_1:\lW\to\lK_0$ is surjective. Indeed, if $\xi_0\in \lK_0$, then there exists $\vu\in \lW$ such that $\Gamma\vu = (0, \xi_0)^\top$; which implies that $(\Gamma_0-\phi\Gamma_1)\vu=\xi_0$, proving the claim. Furthermore,
\[
    \ker B_\phi=\lV_\phi\cap\ker T_1.
\]
Thus, $B_\phi$ is injective if and only if the sum $\lV_\phi+\ker T_1$ is direct.

Suppose first that $B_\phi$ is surjective. For $\vw\in\lW$, there exists $\vv\in\ker T_1$ such that
\[
    B_\phi \vv=\Gamma_0\vw-\phi\Gamma_1\vw.
\]
Then $\vw-\vv\in\lV_\phi$, and hence
\[
    \lW=\lV_\phi+\ker T_1.
\]
Conversely, if this sum equals $\lW$, then for any $\eta_0\in\lK_0$ choose $\vw\in\lW$ with $\Gamma_0-\phi\Gamma_1\vw=\eta_0$ and write $\vw=\vu+\vv$, where $\vu\in\lV_\phi$ and $\vv\in\ker T_1$. It follows that $B_\phi \vv=\eta_0$. Therefore,
\[
    B_\phi\text{ is bijective}
    \quad\Longleftrightarrow\quad
    \lW=\lV_\phi\dotplus\ker T_1\,,
\]
which proves the equivalence of \rm(i) and \rm(ii).

For $\vv\in\ker T_1$, the definition of $Q_0$ yields
\[
    \Gamma_1\vv=Q_0\Gamma_0\vv=Q_0B_0\vv.
\]
Consequently,
\begin{align*}
    B_\phi \vv =\Gamma_0\vv-\phi\Gamma_1\vv =(\I_{\lK_0}-\phi Q_0)B_0\vv,
\end{align*}
which proves \eqref{eq:Bphi-factorisation}. Since $B_0$ is bijective, \rm(ii) and \rm(iii) are equivalent.
\end{proof}

\begin{corollary}\label{cor:neumann-Q0}
Let $\phi\in\mathcal L(\lK_1,\lK_0)$. If
\[
    \|\phi Q_0\|<1,
\]
then $T_\phi$ is bijective. In particular, every non-expansive operator $\phi:\lK_1\to\lK_0$ gives a bijective realisation.
\end{corollary}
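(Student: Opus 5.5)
The plan is to reduce everything to Theorem~\ref{thm:Q0-criterion}, which says that $T_\phi$ is bijective exactly when $\I_{\lK_0}-\phi Q_0$ is bijective on $\lK_0$. Under the hypothesis $\|\phi Q_0\|<1$ the Neumann series $\sum_{n\geq 0}(\phi Q_0)^n$ converges absolutely in $\mathcal L(\lK_0)$. Its sum is a bounded two-sided inverse of $\I_{\lK_0}-\phi Q_0$, and this follows from the usual telescoping identity $(\I-\phi Q_0)\sum_{n=0}^N(\phi Q_0)^n=\I-(\phi Q_0)^{N+1}\to\I$, together with the same identity on the other side. Hence condition (iii) of Theorem~\ref{thm:Q0-criterion} holds, and therefore so does (i). In addition, the factorisation \eqref{eq:Bphi-factorisation} shows that $B_\phi$ is a topological isomorphism.

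For the particular case, let $\phi$ be non-expansive, so $\|\phi\|\leq 1$. By Proposition~\ref{prop:zero-reference-map}(ii) we have $\|Q_0\|<1$. Submultiplicativity of the operator norm then gives $\|\phi Q_0\|\leq\|\phi\|\,\|Q_0\|\leq\|Q_0\|<1$, so the first part applies directly. The degenerate case $\lK_0=\{0\}$ is trivial, and it is in any case covered, since then $\I_{\lK_0}-\phi Q_0$ is the identity on the zero space.

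No real obstacle is expected. The strict inequality $\|Q_0\|<1$ has already been established in Proposition~\ref{prop:zero-reference-map}, and it is precisely what allows the borderline case $\|\phi\|=1$ to be included. One could also note, as a consistency check, that for non-expansive $\phi$ Lemma~\ref{lem:graph-rep} shows $\gr\phi$ is maximal non-negative. As a result $\lV_\phi\subseteq\lW^+$, in line with the signed-boundary-map picture. This is not needed for the bijectivity claim itself.
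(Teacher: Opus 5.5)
Your proposal is correct and follows the paper's proof exactly: the Neumann series inverts $\I_{\lK_0}-\phi Q_0$, Theorem~\ref{thm:Q0-criterion} then gives bijectivity, and for non-expansive $\phi$ the bound $\|\phi Q_0\|\leq\|Q_0\|<1$ comes from Proposition~\ref{prop:zero-reference-map}. Your extra remarks (that $B_\phi$ is a topological isomorphism, the degenerate case $\lK_0=\{0\}$, and $\lV_\phi\subseteq\lW^+$) are correct but not needed.
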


\begin{proof}
The first assertion follows from the Neumann series for $\I_{\lK_0}-\phi Q_0:\lK_0\to \lK_0$ (see \cite[Exercise 6.14]{Brezis}). If $\phi$ is non-expansive, then Proposition~\ref{prop:zero-reference-map} gives
\[
    \|\phi Q_0\|\leq\|Q_0\|<1.
\]
The conclusion follows from Theorem~\ref{thm:Q0-criterion}.
\end{proof}

\begin{remark} If $Q_0=0$, then the boundedness alone of the parameter is enough for bijective realisation. For the canonical boundary quadruple constructed in Theorem~\ref{thm:exist-bq}, one has $Q_0=0$. Hence, Theorem~\ref{thm:Q0-criterion} recovers the von-Neumann statement in Theorem~\ref{thm:von-classif}: every bounded parameter gives a bijective realisation. For a general boundary quadruple, $Q_0$ encodes the necessary information about the homogeneous equation.
\end{remark}

We call $\phi\in \mathcal{L}(\lK_1,\lK_0)$ \emph{$Q_0-$admissible} if $\I_{\lK_0}-\phi Q_0:\lK_0\to \lK_0$ is bijective. We conclude this section with the classification of the signed and neutral boundary conditions. 

\begin{theorem}\label{thm:classification}
Let $(T_0,\widetilde T_0)$ be a joint pair of closed abstract Friedrichs operators on $\lH$, let $(\lK_1,\lK_0,\Gamma_1,\Gamma_0)$ be a boundary quadruple, and let $\phi\in\mathcal L(\lK_1,\lK_0)$. Then:
\begin{itemize}
    \item[\rm(i)] $T_\phi$ is bijective if and only if $\phi$ is $Q_0$-admissible;
    \item[\rm(ii)] the pair $(\lV_\phi,\lV_\phi^{[\perp]})$ satisfies the \emph{(V)}-boundary conditions if and only if $\phi$ is non-expansive;
    \item[\rm(iii)] $\lV_\phi=\lV_\phi^{[\perp]}$ if and only if $\phi$ is unitary.
\end{itemize}
In particular, every non-expansive, and hence every unitary, parameter gives a bijective realisation.
\end{theorem}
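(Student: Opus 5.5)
Part (i) is a restatement of Theorem~\ref{thm:Q0-criterion}, since $Q_0$-admissibility is by definition the bijectivity of $\I_{\lK_0}-\phi Q_0$; nothing further is needed there. For (ii) and (iii) the plan is to transport the (V)-conditions to the boundary space through the correspondence of Proposition~\ref{prop:relation-param}. The starting point is to compute $\lV_\phi^{[\perp]}$ in boundary coordinates. Since $\phi$ is bounded and everywhere defined, Theorem~\ref{thm:extensions}(v) gives
\[
\lV_\phi^{[\perp]}=\dom T_\phi^*=\dom\widetilde T_{\phi^*}=\{\vv\in\lW:\Gamma_1\vv=\phi^*\Gamma_0\vv\}.
\]
By \eqref{eq:BQ}, this equals $\Gamma^{-1}(\Theta^{[\perp]})$ with $\Theta=\gr\phi$ and $\Theta^{[\perp]}=\{(\phi^*\xi_0,\xi_0)^\top:\xi_0\in\lK_0\}$. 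Moreover, $\iscp{\vu}{\vu}=\iscp{\Gamma\vu}{\Gamma\vu}_{\lK}$, so non-negativity (respectively non-positivity) of $\lV_\phi$ or $\lV_\phi^{[\perp]}$ is equivalent to that of $\gr\phi$ or $\Theta^{[\perp]}$. Note also that $\lV_\phi$ is closed and contains $\lW_0$ (Theorem~\ref{thm:extensions}(ii)), so $\lV_\phi^{[\perp][\perp]}=\lV_\phi$ by Theorem~\ref{thm:abstractFO-prop}(vi), and (V2) holds automatically. Hence (V) reduces to (V1).

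For (ii), a vector $(\xi_1,\phi\xi_1)$ has form value $\|\xi_1\|^2-\|\phi\xi_1\|^2$, so $\lV_\phi\subseteq\lW^+$ iff $\|\phi\|\le1$. Similarly, $\lV_\phi^{[\perp]}\subseteq\lW^-$ iff $\|\phi^*\xi_0\|\le\|\xi_0\|$ for all $\xi_0$, i.e.\ iff $\|\phi^*\|\le1$. Since $\|\phi^*\|=\|\phi\|$, both sign conditions are equivalent to $\phi$ being non-expansive. Alternatively, Lemma~\ref{lem:graph-rep} could be invoked for the maximality statement, but it is not needed.

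For (iii), by the correspondence of Proposition~\ref{prop:relation-param}, $\lV_\phi=\lV_\phi^{[\perp]}$ iff $\gr\phi=\{(\phi^*\xi_0,\xi_0)^\top\}$. If $\phi$ is unitary, then $\phi^*=\phi^{-1}$, and $(\xi_1,\phi\xi_1)=(\phi^*\xi_0,\xi_0)$ with $\xi_0=\phi\xi_1$, so the two sets coincide. Conversely, equality of the sets gives, for every $\xi_0$, some $\xi_1$ with $\xi_1=\phi^*\xi_0$ and $\phi\xi_1=\xi_0$. Thus $\phi\phi^*=\I_{\lK_0}$. The reverse inclusion gives, for every $\xi_1$, $\phi^*\phi\xi_1=\xi_1$, so $\phi^*\phi=\I_{\lK_1}$. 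Hence $\phi$ is unitary.

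The final assertion follows from (ii) together with Corollary~\ref{cor:neumann-Q0}, or alternatively from Remark~\ref{rem:Vcond}. The main point requiring care is the identification of $\lV_\phi^{[\perp]}$ with $\dom T_\phi^*$. This uses Theorem~\ref{thm:abstractFO-prop}(xi) with $\lV_\phi$ closed, or one can simply redo the computation from the proof of Theorem~\ref{thm:extensions}(v) using the surjectivity of $\Gamma$.
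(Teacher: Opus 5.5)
Your proposal is correct and takes essentially the same route as the paper: (i) is read off from Theorem~\ref{thm:Q0-criterion}, (ii) follows from computing $(\gr\phi)^{[\perp]}=\{(\phi^*\eta_0,\eta_0)^\top:\eta_0\in\lK_0\}$ and comparing signs via $\|\phi^*\|=\|\phi\|$, and (iii) follows from the equality $\gr\phi=(\gr\phi)^{[\perp]}$. Two of your additions make the argument slightly more complete than the paper's: your remark that (V2) holds automatically because $\lV_\phi$ is closed and contains $\lW_0$, and your two-sided check $\phi\phi^*=\I_{\lK_0}$, $\phi^*\phi=\I_{\lK_1}$.
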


\begin{proof}
Part \rm(i) is Theorem~\ref{thm:Q0-criterion}. 

For $\vu\in\lV_\phi$,
\[
    \iscp{\vu}{\vu}
    =\|\Gamma_1\vu\|_{\lK_1}^2
     -\|\phi\Gamma_1\vu\|_{\lK_0}^2\geq 0\;.
\]

The complement of $\gr \phi$ in $\lK$ is
\begin{equation}\label{eq:graph-orthogonal}
    (\gr\phi)^{[\perp]}
    =\left\{(\phi^*\eta_0,\eta_0)^\top:\eta_0\in\lK_0\right\}.
\end{equation}
Indeed, $(\eta_1,\eta_0)^\top\in(\gr\phi)^{[\perp]}$ if and only if
\[
    \scp{\xi_1}{\eta_1}_{\lK_1}
    -\scp{\phi\xi_1}{\eta_0}_{\lK_0}=0
    \qquad(\xi_1\in\lK_1=\dom \phi),
\]
which is equivalent to $\eta_1=\phi^*\eta_0$. By Proposition~\ref{prop:relation-param},
\[
    \lV_\phi^{[\perp]}
    =\Gamma^{-1}((\gr\phi)^{[\perp]}).
\]
So, $\lV_\phi^{[\perp]}$ is negative if and only if $\|\phi^*\|\leq1$, which is equivalent to $\|\phi\|\leq1$. This proves \rm(ii).

Finally, \eqref{eq:graph-orthogonal} shows that
\[
    \lV_\phi=\lV_\phi^{[\perp]}
    \quad\Longleftrightarrow\quad
    \gr\phi=(\gr\phi)^{[\perp]}.
\]
If $\phi$ is unitary, the second equality is immediate. Conversely, this equality implies first that
\[
    \|\phi\xi_1\|_{\lK_0}=\|\xi_1\|_{\lK_1}
    \qquad(\xi_1\in\lK_1),
\]
and then, by using every vector $(\phi^*\eta_0,\eta_0)^\top$ in the right-hand side of \eqref{eq:graph-orthogonal}, it follows that $\phi\phi^*=\I_{\lK_0}$. Hence $\phi$ is unitary. This proves \rm(iii). The final assertion follows from Corollary~\ref{cor:neumann-Q0}.
\end{proof}

\section{Boundary triples and extension theory}\label{sec:bt}

\begin{definition}\label{def:BT}
    Let $(T_0, \widetilde{T}_0)$ be a joint pair of closed abstract Friedrichs operators on a Hilbert space $\lH$. Suppose that there exist a Hilbert space $\lX$ and a pair of boundary maps $(\gamma_1,\gamma_0)$ satisfying the following two conditions:
    \begin{itemize}
        \item[(BT1)] The boundary form \eqref{eq:D} can be represented as
        \begin{equation}\label{eq:BT}
            (\forall\,\vu,\vv\in\lW)\qquad
            \iscp{\vu}{\vv}
            =
            \scp{\gamma_1\vu}{\gamma_0\vv}_{\lX}
            +
            \scp{\gamma_0\vu}{\gamma_1\vv}_{\lX}.
        \end{equation}

        \item[(BT2)] The operator
        \[
            \gamma:=
            \begin{pmatrix}
                \gamma_1\\
                \gamma_0
            \end{pmatrix}
            :\lW\to\lX\oplus\lX
        \]
        is surjective.
    \end{itemize}
    Then $(\lX, \gamma_1, \gamma_0)$ is called a \emph{boundary triple} for $(T_1,\widetilde{T}_1)$.
\end{definition}

\begin{remark} For the usual boundary triples of symmetric operators, Green's identity contains the skew-Hermitian combination $\scp{\gamma_1\vu}{\gamma_0\vv}-\scp{\gamma_0\vu}{\gamma_1\vv}$. The plus sign in \eqref{eq:BT} reflects the skew-symmetric core of a joint pair of abstract Friedrichs operators; see \cite{ES23}. 
\end{remark}

\begin{lemma}\label{lem:bt-cont}
For every boundary triple $(\lX,\gamma_1,\gamma_0)$, the maps $\gamma_0,\gamma_1:\lW\to\lX$ are bounded in the graph norm and
\[
\ker\gamma_0\cap\ker\gamma_1=\lW_0.
\]
\end{lemma}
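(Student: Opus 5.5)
The plan is to reduce Lemma~\ref{lem:bt-cont} to Lemma~\ref{lem:cont} by turning the boundary triple into a boundary quadruple. Set $\lK_1=\lK_0=\lX$ and define
\[
\Gamma_1:=\tfrac{1}{\sqrt2}(\gamma_1+\gamma_0),\qquad
\Gamma_0:=\tfrac{1}{\sqrt2}(\gamma_1-\gamma_0).
\]
A direct expansion should give
\[
\scp{\Gamma_1\vu}{\Gamma_1\vv}_{\lX}-\scp{\Gamma_0\vu}{\Gamma_0\vv}_{\lX}
=\scp{\gamma_1\vu}{\gamma_0\vv}_{\lX}+\scp{\gamma_0\vu}{\gamma_1\vv}_{\lX}.
\]
The reason is that the diagonal terms $\scp{\gamma_j\vu}{\gamma_j\vv}$ appear with the same sign in both squares and therefore cancel, while the cross terms double and are then halved by the factor $\tfrac12$. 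By (BT1) the right-hand side is $\iscp{\vu}{\vv}$, so (BQ1) holds.

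For (BQ2), I would note that $\Gamma=V\gamma$, where
\[
V=\tfrac1{\sqrt2}\begin{pmatrix}\I&\I\\ \I&-\I\end{pmatrix}
\]
is a bounded bijection of $\lX\oplus\lX$; indeed $V^2=\I$. Hence $\Gamma$ is surjective because $\gamma$ is, by (BT2). It follows that $(\lX,\lX,\Gamma_1,\Gamma_0)$ is a boundary quadruple.

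Lemma~\ref{lem:cont} then gives two facts. First, $\Gamma_0$ and $\Gamma_1$ are bounded in the graph norm, and therefore so are
\[
\gamma_1=\tfrac1{\sqrt2}(\Gamma_1+\Gamma_0),\qquad \gamma_0=\tfrac1{\sqrt2}(\Gamma_1-\Gamma_0).
\]
Second, $\ker\Gamma=\lW_0$. Since $V$ is injective, $\ker\gamma=\ker\Gamma$, and hence $\ker\gamma_0\cap\ker\gamma_1=\lW_0$.

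The only step needing care is the sign bookkeeping in the polarisation identity, since the inner product is sesquilinear. The coefficients are real, so conjugation causes no trouble. If one prefers to avoid the reduction, the proof of Lemma~\ref{lem:cont} can be repeated verbatim: in the closed-graph step, choose $\vv$ with $\gamma_0\vv=\xi_1-\gamma_1\vu$ and $\gamma_1\vv=\xi_0-\gamma_0\vu$, which is possible by surjectivity of $\gamma$. This turns the vanishing form into a sum of squared norms and closes the argument directly.
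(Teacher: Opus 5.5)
Your proposal is correct and follows the paper's proof. The change of variables $\Gamma_1=(\gamma_1+\gamma_0)/\sqrt2$, $\Gamma_0=(\gamma_1-\gamma_0)/\sqrt2$ turns the triple into the quadruple $(\lX,\lX,\Gamma_1,\Gamma_0)$, and Lemma~\ref{lem:cont} then yields both claims; you spell out the checks of (BQ1), (BQ2) and the transfer back to $\gamma_0,\gamma_1$, which the paper leaves implicit.
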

\begin{proof}
The invertible change of variables
\[
\Gamma_1:=\frac{\gamma_1+\gamma_0}{\sqrt2},
\qquad
\Gamma_0:=\frac{\gamma_1-\gamma_0}{\sqrt2}
\]
turns the boundary triple into the boundary quadruple
$
(\lX,\lX,\Gamma_1,\Gamma_0)$.
The assertion follows from Lemma~\ref{lem:cont}.
\end{proof}

\subsection{Existence}

The existence of a boundary triple is subject to the unitary isomorphism of the kernel spaces appearing in the von Neumann decomposition. More precisely, we have the following result.

\begin{theorem}\label{thm:exist-bt}
    Let $(T,\widetilde{T})$ be a joint pair of abstract Friedrichs operators on $\lH$ such that the Hilbert spaces
    \[
        (\ker T_1,-\iscp{\cdot}{\cdot})
        \quad\text{and}\quad
        (\ker\widetilde{T}_1,\iscp{\cdot}{\cdot})
    \]
    are unitarily isomorphic. Then there exists a boundary triple $(\lX,\gamma_1, \gamma_0)$ for $(T_1,\widetilde{T}_1)$.
\end{theorem}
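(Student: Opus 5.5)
The plan is to start from the canonical boundary quadruple of Theorem~\ref{thm:exist-bq} and then rotate its coordinates, exactly inverting the change of variables used in the proof of Lemma~\ref{lem:bt-cont}. Put
\[
\lK_1=(\ker\widetilde T_1,\iscp{\cdot}{\cdot}),\qquad \lK_0=(\ker T_1,-\iscp{\cdot}{\cdot}),\qquad \Gamma_1=p_{\mathrm{\tilde k}},\quad \Gamma_0=p_{\mathrm{k}} .
\]
By hypothesis there is a unitary isomorphism $W:\lK_0\to\lK_1$. Take $\lX:=\lK_1$ and replace $\Gamma_0$ by $W\Gamma_0:\lW\to\lX$. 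This gives the boundary quadruple $(\lX,\lX,\Gamma_1,W\Gamma_0)$. It is one because
\[
\scp{W\Gamma_0\vu}{W\Gamma_0\vv}_{\lX}=\scp{\Gamma_0\vu}{\Gamma_0\vv}_{\lK_0},
\]
so (BQ1) is unchanged. Surjectivity also survives, since $\I\oplus W$ is bijective; this is a special case of Theorem~\ref{thm:transformations}(i) with $U=\I_{\lK_1}\oplus W$, which is $\J$-unitary.

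Next, define
\[
\gamma_1:=\frac{\Gamma_1+W\Gamma_0}{\sqrt2},\qquad \gamma_0:=\frac{\Gamma_1-W\Gamma_0}{\sqrt2}.
\]
To verify (BT1), write $a=\Gamma_1\vu$, $b=W\Gamma_0\vu$, $c=\Gamma_1\vv$ and $d=W\Gamma_0\vv$. Then
\[
\scp{\gamma_1\vu}{\gamma_0\vv}+\scp{\gamma_0\vu}{\gamma_1\vv}
=\tfrac12\bigl(\scp{a+b}{c-d}+\scp{a-b}{c+d}\bigr)
=\scp ac-\scp bd .
\]
The cross terms cancel, and the result is precisely the right-hand side of \eqref{eq:BQ}, which equals $\iscp{\vu}{\vv}$.

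For (BT2), note that $\gamma=V\,(\Gamma_1,W\Gamma_0)^\top$, where
\[
V=\frac1{\sqrt2}\begin{pmatrix}\I&\I\\ \I&-\I\end{pmatrix}
\]
is a unitary (in particular bijective) operator on $\lX\oplus\lX$. Hence $\gamma$ is surjective because $(\Gamma_1,W\Gamma_0)^\top$ is.

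There is no real obstacle. The only point needing care is the bookkeeping: the hypothesis must be read with respect to the signed forms, so that $W$ is isometric from $-\iscp{\cdot}{\cdot}$ on $\ker T_1$ to $\iscp{\cdot}{\cdot}$ on $\ker\widetilde T_1$. That is exactly what makes the $\lK_0$-term transform correctly. The sign convention of \eqref{eq:BT}, with a plus sign, is what produces the difference $\scp ac-\scp bd$ rather than a sum.
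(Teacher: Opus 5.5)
Your proof is correct and follows essentially the same route as the paper: both start from the canonical quadruple $(p_{\mathrm{\tilde k}},p_{\mathrm{k}})$, identify the two kernel spaces by a unitary, and apply the rotation $\frac{1}{\sqrt2}\begin{pmatrix}\I&\I\\ \I&-\I\end{pmatrix}$. The differences are cosmetic. The paper transports $\ker\widetilde T_1$ onto $\lX=(\ker T_1,-\iscp{\cdot}{\cdot})$ rather than the reverse, and it checks (BT2) by writing down an explicit preimage $\vw=\tilde\eta+\eta$ instead of factoring $\gamma$ through a unitary.
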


\begin{remark}\label{rem:isomorhism=unitary}
    The terminology is as fixed in the notation section. The hypothesis of Theorem~\ref{thm:exist-bt} may equivalently be formulated by requiring a topological isomorphism between the two kernel Hilbert spaces, since the existence of a topological isomorphism between Hilbert spaces implies the existence of a unitary isomorphism; see \cite[Remark~3.13]{ES23}.
\end{remark}

\begin{proof}
    Let
    \[
        \theta:(\ker\widetilde{T}_1,\iscp{\cdot}{\cdot})
        \to
        (\ker T_1,-\iscp{\cdot}{\cdot})
    \]
    be a unitary isomorphism, and set
    \[
        \lX:=(\ker T_1,-\iscp{\cdot}{\cdot}).
    \]
    Define $\gamma_0,\gamma_1:\lW\to\lX$ by
    \begin{equation}\label{eq:canonical-bt}
        \gamma_0
        :=
        \frac{1}{\sqrt{2}}\bigl(\theta\circ p_{\mathrm{\tilde k}}-p_{\mathrm{k}}\bigr),
        \qquad
        \gamma_1
        :=
        \frac{1}{\sqrt{2}}\bigl(\theta\circ p_{\mathrm{\tilde k}}+p_{\mathrm{k}}\bigr).
    \end{equation}
    For $\vu,\vv\in\lW$, a direct computation gives
    \begin{align*}
        \scp{\gamma_1\vu}{\gamma_0\vv}_{\lX}
        +\scp{\gamma_0\vu}{\gamma_1\vv}_{\lX}
        =
        \scp{\theta p_{\mathrm{\tilde k}}\vu}
             {\theta p_{\mathrm{\tilde k}}\vv}_{\lX}
        -
        \scp{p_{\mathrm{k}}\vu}{p_{\mathrm{k}}\vv}_{\lX}
        =        \iscp{p_{\mathrm{\tilde k}}\vu}{p_{\mathrm{\tilde k}}\vv}
        +
        \iscp{p_{\mathrm{k}}\vu}{p_{\mathrm{k}}\vv}\;.
    \end{align*}
    Hence, from \eqref{eq:char-D}, {\rm (BT1)} holds.

    It remains to prove {\rm (BT2)}. Let $\xi_1,\xi_0\in\lX$ and set
    \[
        \tilde\eta
        :=
        \theta^{-1}\left(\frac{\xi_1+\xi_0}{\sqrt{2}}\right)\in \ker\widetilde T_1,
        \qquad
        \eta
        :=
        \frac{\xi_1-\xi_0}{\sqrt{2}}\in \ker T_1.
    \]
    By the von Neumann decomposition \eqref{eq:von-decomposition}, there exists $\vw\in \lW$ such that $\vw= \tilde\eta+\eta$, and
    therefore,
    \[
        \gamma_1\vw=\xi_1,
        \qquad
        \gamma_0\vw=\xi_0.
    \]
    Hence, $\gamma:\lW\to\lX\oplus\lX$ is surjective.
\end{proof}

\begin{remark}\label{rem:btv}
    Let $(\gamma_0,\gamma_1,\lX)$ be a boundary triple for $(T_1,\widetilde{T}_1)$. Then
    \begin{equation}\label{eq:neutral-kernels}
        (\ker\gamma_0)^{[\perp]}=\ker\gamma_0,
        \qquad
        (\ker\gamma_1)^{[\perp]}=\ker\gamma_1.
    \end{equation}
    Indeed, if $\vu,\vv\in\ker\gamma_0$, then \eqref{eq:BT} gives
    \[
        \iscp{\vu}{\vv}=0,
    \]
    and hence
    \[
        \ker\gamma_0\subseteq(\ker\gamma_0)^{[\perp]}.
    \]
    Conversely, let $\vu\in(\ker\gamma_0)^{[\perp]}$. For any $\xi\in\lX$, the surjectivity of $\gamma$ yields $\vv\in\lW$ such that
    \[
        \gamma_1\vv=\xi,
        \qquad
        \gamma_0\vv=0.
    \]
    Thus, $\vv\in\ker\gamma_0$, and therefore
    \[
        0
        =
        \iscp{\vu}{\vv}
        =
        \scp{\gamma_0\vu}{\xi}_{\lX}.
    \]
    Since $\xi\in\lX$ is arbitrary, $\gamma_0\vu=0$. This proves the first equality in \eqref{eq:neutral-kernels}. The second equality follows analogously.

    Consequently, both $\ker\gamma_0$ and $\ker\gamma_1$ are self-orthogonal subspaces of $\lW$ with respect to the indefinite inner product. In particular, $(\ker\gamma_0,\ker\gamma_0)$ and $(\ker\gamma_1,\ker\gamma_1)$ satisfy the {\rm (V)}-boundary conditions.
\end{remark}

The existence condition for boundary triples can equivalently be expressed in terms of a boundary quadruple.

\begin{corollary}\label{cor:bt}
    Let $(T_0,\widetilde{T}_0)$ be a joint pair of closed abstract Friedrichs operators on $\lH$, and let $(\lK_1,\lK_0,\Gamma_1,\Gamma_0)$ be a boundary quadruple. Then the following assertions are equivalent:
    \begin{itemize}
        \item[\emph{(i)}] The Hilbert spaces
        \[
            (\ker T_1,-\iscp{\cdot}{\cdot})
            \quad\text{and}\quad
            (\ker\widetilde{T}_1,\iscp{\cdot}{\cdot})
        \]
        are unitarily isomorphic;

        \item[\emph{(ii)}] the Hilbert spaces $\lK_1$ and $\lK_0$ are unitarily isomorphic;

        \item[\emph{(iii)}] there exists a unitary isomorphism $\theta:\lK_1\to\lK_0$ such that, with $\lX:=\lK_0$, the maps
        \begin{equation}\label{eq:cor-bt}
            \gamma_0
            :=
            \frac{1}{\sqrt{2}}\bigl(\theta\circ\Gamma_1-\Gamma_0\bigr),
            \qquad
            \gamma_1
            :=
            \frac{1}{\sqrt{2}}\bigl(\theta\circ\Gamma_1+\Gamma_0\bigr)
        \end{equation}
        form a boundary triple $(\lX, \gamma_1, \gamma_0)$ for $(T_1,\widetilde{T}_1)$.
    \end{itemize}
\end{corollary}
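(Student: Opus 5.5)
We prove the cycle (i)$\Rightarrow$(ii)$\Rightarrow$(iii)$\Rightarrow$(i). The only substantial input is a way to identify $\lK_1$ and $\lK_0$ with the two kernel Hilbert spaces of the von Neumann decomposition. We get it by comparing the given boundary quadruple with the canonical one from Theorem~\ref{thm:exist-bq}.

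\emph{Step 1: (i)$\Leftrightarrow$(ii).} By Theorem~\ref{thm:transformations}(ii) there is a $\J$-unitary $U:\lK_1\oplus\lK_0\to(\ker\widetilde T_1,\iscp{\cdot}{\cdot})\oplus(\ker T_1,-\iscp{\cdot}{\cdot})$ relating the two boundary maps. We need to know that $\J$-unitarily equivalent Kre\u{\i}n spaces have unitarily isomorphic positive parts and unitarily isomorphic negative parts. A direct argument avoids appealing to general Kre\u{\i}n-space facts (cf.~\cite{Bo}). Consider the closed subspace $U[\lK_1\oplus\{0\}]$. It is uniformly positive and maximal non-negative in the target space. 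By Lemma~\ref{lem:graph-rep}(i) it is the graph of a non-expansive $\phi:\ker\widetilde T_1\to\ker T_1$; uniform positivity forces $\|\phi\|<1$. The map $\xi_1\mapsto$ (first component of $U(\xi_1,0)^\top$) is then a bijection $\lK_1\to\ker\widetilde T_1$: it is bounded, and it is bounded below because $\|\xi_1\|^2=\|\eta\|^2-\|\phi\eta\|^2\ge(1-\|\phi\|^2)\|\eta\|^2$. So $\lK_1$ and $\ker\widetilde T_1$ are topologically isomorphic. The same argument with Lemma~\ref{lem:graph-rep}(ii) applied to $U[\{0\}\oplus\lK_0]$ shows that $\lK_0$ and $\ker T_1$ are topologically isomorphic. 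For the negative part one can alternatively use Proposition~\ref{prop:zero-reference-map}(i): $B_0=\Gamma_0|_{\ker T_1}$ is a topological isomorphism $\ker T_1\to\lK_0$, and on $\ker T_1$ the norm of $-\iscp{\cdot}{\cdot}$ is equivalent to the $\lH$-norm by Theorem~\ref{thm:abstractFO-prop}(ix). Symmetrically, for the positive part, $\Gamma_1|_{\ker\widetilde T_1}$ is a topological isomorphism onto $\lK_1$, by Corollary~\ref{cor:bqv} and the tilde version of Theorem~\ref{thm:abstractFO-prop}(xiii). Remark~\ref{rem:isomorhism=unitary} upgrades topological isomorphisms to unitary ones, so (i) and (ii) are equivalent by composing isomorphisms.

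\emph{Step 2: (ii)$\Rightarrow$(iii).} Take a unitary $\theta:\lK_1\to\lK_0$ and define $\gamma_0,\gamma_1$ by \eqref{eq:cor-bt}. We verify (BT1) by expanding, exactly as in the proof of Theorem~\ref{thm:exist-bt}:
\[
\scp{\gamma_1\vu}{\gamma_0\vv}_{\lX}+\scp{\gamma_0\vu}{\gamma_1\vv}_{\lX}=\scp{\theta\Gamma_1\vu}{\theta\Gamma_1\vv}_{\lK_0}-\scp{\Gamma_0\vu}{\Gamma_0\vv}_{\lK_0}.
\]
Unitarity of $\theta$ turns the right-hand side into \eqref{eq:BQ}. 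For (BT2), given $\xi_1,\xi_0\in\lX$, use the surjectivity of $\Gamma$ to pick $\vw$ with $\Gamma_1\vw=\theta^{-1}\bigl(\tfrac{\xi_1+\xi_0}{\sqrt2}\bigr)$ and $\Gamma_0\vw=\tfrac{\xi_1-\xi_0}{\sqrt2}$. Then $\gamma\vw=(\xi_1,\xi_0)^\top$.

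\emph{Step 3: (iii)$\Rightarrow$(ii).} This is trivial, since $\theta$ itself is the required unitary isomorphism.

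The main obstacle is Step~1, the transfer of unitary equivalence from the boundary spaces to the kernel spaces. I expect the cleanest route to be the kernel-restriction isomorphisms $B_0$ and $\Gamma_1|_{\ker\widetilde T_1}$, combined with Remark~\ref{rem:isomorhism=unitary}.
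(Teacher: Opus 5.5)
Your proof is correct. Steps~2 and~3 match the paper; your explicit preimage $\vw$ is just the inverse block matrix the paper writes down. Step~1, however, takes a genuinely different route.

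\textbf{How the paper handles (i)$\Leftrightarrow$(ii).} The paper gets the unitary isomorphisms $\lK_1\to(\ker\widetilde T_1,\iscp{\cdot}{\cdot})$ and $\lK_0\to(\ker T_1,-\iscp{\cdot}{\cdot})$ in one line. It observes that the given quadruple and the canonical one from Theorem~\ref{thm:exist-bq} are related by a $\J$-unitary operator. It then invokes the invariance of the positive and negative indices of a Kre\u{\i}n space, citing Bogn\'ar.

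\textbf{How you handle it.} You have two routes. In the first, you reprove that invariance by hand, applying Lemma~\ref{lem:graph-rep} to $U[\lK_1\oplus\{0\}]$. In the second, preferred route, you bypass Kre\u{\i}n-space theory entirely:
\begin{itemize}
\item $B_0=\Gamma_0|_{\ker T_1}$ is a topological isomorphism onto $\lK_0$ by Proposition~\ref{prop:zero-reference-map}(i).
\item $\Gamma_1|_{\ker\widetilde T_1}$ is a topological isomorphism onto $\lK_1$, because $\lW=\ker\Gamma_1\dotplus\ker\widetilde T_1$ by Corollary~\ref{cor:bqv} and the $\widetilde T$-version of Theorem~\ref{thm:abstractFO-prop}(xiii).
\item Remark~\ref{rem:isomorhism=unitary} upgrades both to unitary isomorphisms.
\end{itemize}

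\textbf{Trade-off.} The paper's route is shorter but rests on an external structural fact. Yours stays inside the paper and produces concrete identifying maps. The price is that you need the signed forms on the kernels to be equivalent to the $\lH$-norm. This is not literally part of Theorem~\ref{thm:abstractFO-prop}(ix). It follows, as in the proof of Proposition~\ref{prop:zero-reference-map}, from
\[
2\mu\|\vv\|^2\le-\iscp{\vv}{\vv}=\scp{\vv}{(T_1+\widetilde T_1)\vv}\le2\lambda\|\vv\|^2,\qquad \vv\in\ker T_1,
\]
together with the analogous estimate on $\ker\widetilde T_1$.

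\textbf{Two small slips.} First, in your first route the inequality $\|\xi_1\|^2\ge(1-\|\phi\|^2)\|\eta\|^2$ shows that $\xi_1\mapsto\eta$ is bounded, not bounded below. Bounded-belowness is the trivial $\|\xi_1\|\le\|\eta\|$. So uniform positivity is not actually needed: bijectivity plus boundedness of $U$ already suffice. Second, you announce the cycle (i)$\Rightarrow$(ii)$\Rightarrow$(iii)$\Rightarrow$(i), but you actually prove (i)$\Leftrightarrow$(ii) and (ii)$\Leftrightarrow$(iii). That establishes the equivalence just as well.
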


\begin{proof}
    The canonical boundary quadruple from Theorem~\ref{thm:exist-bq} and the given boundary quadruple $(\lK_1,\lK_0,\Gamma_1,\Gamma_0)$ are related by a $\J$-unitary operator. Hence the positive and negative indices of the corresponding Kre\u{\i}n spaces coincide; see, for example, \cite[Corollary~11.9, Chapter~I]{Bo}. Consequently, there exist unitary isomorphisms
    \[
        U_+:\lK_1\to(\ker\widetilde T_1,\iscp{\cdot}{\cdot}),
        \qquad
        U_-:\lK_0\to(\ker T_1,-\iscp{\cdot}{\cdot}).
    \]
    Therefore $\lK_1$ and $\lK_0$ are unitarily isomorphic if and only if $(\ker\widetilde T_1,\iscp{\cdot}{\cdot})$ and $(\ker T_1,-\iscp{\cdot}{\cdot})$ are unitarily isomorphic. Indeed, if $\theta:\lK_1\to\lK_0$ is a unitary isomorphism, then $U_-\theta U_+^{-1}$ is a unitary isomorphism between the kernel spaces; the converse follows in the same way.

    Assume {\rm (ii)}, and let $\theta:\lK_1\to\lK_0$ be a unitary isomorphism. Set $\lX:=\lK_0$ and define $\gamma_0,\gamma_1$ by \eqref{eq:cor-bt}. A direct computation using the fact that $\theta$ is unitary, gives \eqref{eq:BT}. Moreover,
    \[
        \begin{pmatrix}
            \gamma_1\\
            \gamma_0
        \end{pmatrix}
        =
        \frac{1}{\sqrt{2}}
        \begin{pmatrix}
            \theta & \I_{\lK_0}\\
            \theta & -\I_{\lK_0}
        \end{pmatrix}
        \begin{pmatrix}
            \Gamma_1\\
            \Gamma_0
        \end{pmatrix}.
    \]
    The operator
    \[
        \frac{1}{\sqrt{2}}
        \begin{pmatrix}
            \theta & \I_{\lK_0}\\
            \theta & -\I_{\lK_0}
        \end{pmatrix}
        :\lK_1\oplus\lK_0\to\lX\oplus\lX
    \]
    is bijective, with inverse
    \[
        \frac{1}{\sqrt{2}}
        \begin{pmatrix}
            \theta^{-1} & \theta^{-1}\\
            \I_{\lK_0} & -\I_{\lK_0}
        \end{pmatrix}.
    \]
    Since $\Gamma:\lW\to\lK_1\oplus\lK_0$ is surjective, it follows that $\gamma:\lW\to\lX\oplus\lX$ is surjective. Hence, {\rm (BT2)} holds, and {\rm (ii)} implies {\rm (iii)}. The implication {\rm (iii)}$\Rightarrow${\rm (ii)} is immediate.
\end{proof}

\begin{remark}
With $\theta$ as in Corollary~\ref{cor:bt}, a boundary-quadruple condition
\[
    \Gamma_0\vu=(\phi\theta)\Gamma_1\vu,
    \qquad \phi\in\mathcal L(\lX),
\]
is equivalent to
\[
    (\I_{\lX}-\phi)\gamma_1\vu
    -(\I_{\lX}+\phi)\gamma_0\vu=0.
\]
Thus the boundary-triple parametrisation is a Cayley-type rewriting of the boundary quadruple parametrisation.
\end{remark}
\section{An overview of different boundary conditions}\label{sec:bc}

\subsection{(V)-boundary conditions}

Theorem \ref{thm:classification}(ii) together with Theorem \ref{thm:extensions}(v) yields the following characterisation of (V)-boundary conditions.
\begin{theorem}\label{thm:v-classif}
Let $(T_0,\widetilde{T}_0)$ be a joint pair of closed abstract Friedrichs operators on $\lH$, let $(\lK_1,\lK_0,\Gamma_1,\Gamma_0)$ be a boundary quadruple, and let $\lV\subseteq\lW$ be a closed subspace containing $\lW_0$. Put $\widetilde\lV:=\lV^{[\perp]}$. Then the following assertions are equivalent:
\begin{itemize}
    \item[{\rm(i)}] The pair $(\lV,\widetilde\lV)$ satisfies the \emph{(V)}-boundary conditions;
    \item[{\rm(ii)}] there exists a unique non-expansive operator $\phi:\lK_1\to\lK_0$ such that
    \begin{equation}\label{eq:v-classif}
        \lV=\ker(\phi\Gamma_1-\Gamma_0),
        \qquad
        \widetilde\lV=\ker(\Gamma_1-\phi^*\Gamma_0).
    \end{equation}
\end{itemize}
\end{theorem}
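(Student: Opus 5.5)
The plan is to pass to the boundary space via Proposition~\ref{prop:relation-param}. We set $\Theta:=\Gamma(\lV)$, a closed relation in $\lK$ with $\lV=\Gamma^{-1}(\Theta)$. The first step is the identity $\Gamma(\lV^{[\perp]})=\Theta^{[\perp]}$, with $\Theta^{[\perp]}$ taken in the Kre\u{\i}n space $(\lK,\iscp{\cdot}{\cdot}_{\lK})$. It follows directly from \eqref{eq:BQ} and the surjectivity of $\Gamma$: a vector $\vv\in\lW$ satisfies $\iscp{\vu}{\vv}=0$ for all $\vu\in\lV$ if and only if $\iscp{\mxi}{\Gamma\vv}_{\lK}=0$ for all $\mxi\in\Theta$. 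By \eqref{eq:BQ}, $\lV$ is non-negative in $\lW$ exactly when $\Theta$ is non-negative in $\lK$. Likewise, $\widetilde\lV$ is non-positive exactly when $\Theta^{[\perp]}$ is non-positive.

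For (ii)$\Rightarrow$(i), the equality $\lV=\ker(\phi\Gamma_1-\Gamma_0)$ says $\lV=\lV_\phi$. Theorem~\ref{thm:classification}(ii) then shows that $(\lV_\phi,\lV_\phi^{[\perp]})$ satisfies (V), since $\phi$ is non-expansive. The formula for $\widetilde\lV$ is consistent with this, by \eqref{eq:graph-orthogonal} (equivalently Theorem~\ref{thm:extensions}(v)). Indeed, $\lV_\phi^{[\perp]}=\Gamma^{-1}((\gr\phi)^{[\perp]})=\{\vv:\Gamma_1\vv=\phi^*\Gamma_0\vv\}$.

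For (i)$\Rightarrow$(ii), the aim is to show that $\Theta$ is the graph of an everywhere defined non-expansive operator. It is easiest to show that $\Theta$ is maximal non-negative in $\lK$ and then invoke Lemma~\ref{lem:graph-rep}(i). Non-negativity comes from (V1). For maximality, suppose $\Theta\subseteq\Theta_1$ with $\Theta_1$ non-negative, and pick $(\eta_1,\eta_0)^\top\in\Theta_1$. This is the main point of the argument, and I would avoid general Kre\u{\i}n-space maximality theory. Since $\Theta$ is non-negative, $\mul\Theta=\{0\}$, so $\Theta=\gr\phi$ for some $\phi$ with $\|\phi\xi\|\le\|\xi\|$ on $\dom\phi$, and $\dom\phi$ is closed by the argument in Lemma~\ref{lem:graph-rep}. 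Next we use non-positivity of $\Theta^{[\perp]}$. If $\zeta_1\perp\dom\phi$ in $\lK_1$, then $(\zeta_1,0)^\top\in\Theta^{[\perp]}$, and non-positivity gives $\|\zeta_1\|^2\le0$. Hence $\dom\phi=\lK_1$, and $\phi$ is bounded and non-expansive, so Lemma~\ref{lem:graph-rep}(i) yields maximality without further work. Uniqueness of $\phi$ follows because $\phi$ is determined by its graph $\Gamma(\lV)$. Finally, $\lV=\Gamma^{-1}(\gr\phi)=\ker(\phi\Gamma_1-\Gamma_0)$, and \eqref{eq:graph-orthogonal} gives $\widetilde\lV=\Gamma^{-1}(\Theta^{[\perp]})=\ker(\Gamma_1-\phi^*\Gamma_0)$.

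The only delicate step is the density/closedness of $\dom\phi$, which is handled by the two semidefiniteness conditions together. The transfer $\Gamma(\lV^{[\perp]})=\Theta^{[\perp]}$ is routine once the surjectivity of $\Gamma$ and $\lW_0\subseteq\lV$ are used.
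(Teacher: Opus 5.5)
Your proposal is correct and follows the paper's route. You transfer to the Kre\u{\i}n space $\lK$ via $\Gamma$, represent $\Gamma(\lV)$ as $\gr\phi$ with $\phi$ non-expansive through Lemma~\ref{lem:graph-rep}, and read off $\widetilde\lV$ from \eqref{eq:graph-orthogonal}. The one real difference is in the key step. The paper simply asserts that \emph{(V)} makes $\Gamma(\lV)$ maximal non-negative, whereas you prove $\dom\phi=\lK_1$ from the non-positivity of $\Theta^{[\perp]}=\Gamma(\widetilde\lV)$, which is exactly the justification the paper leaves implicit. Once you have that, the maximality detour is superfluous: the set-up with $\Theta\subseteq\Theta_1$, which you abandon, and the final appeal to the converse of Lemma~\ref{lem:graph-rep} can both be dropped.
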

\begin{proof}
If (i) holds, then $\Gamma(\lV)$ is a maximal non-negative subspace of $\lK$, hence by Lemma~\ref{lem:graph-rep} it is the graph of a unique non-expansive parameter $\phi:\lK_1\to\lK_0$. Therefore $\lV=\ker(\phi\Gamma_1-\Gamma_0)$. Proposition~\ref{prop:coord}, or a direct computation of the Kre\u{\i}n orthogonal complement of $\gr\phi$, gives
\[
\Gamma(\lV^{[\perp]})=(\gr\phi)^{[\perp]}
=\{(\phi^*\xi_0,\xi_0)^\top:\xi_0\in\lK_0\},
\]
which yields the second identity in~\eqref{eq:v-classif}. Conversely, a non-expansive $\phi$ makes $\gr\phi$ maximal non-negative and its orthogonal complement maximal non-positive; pulling these spaces back by $\Gamma$ gives the {(V)}-boundary conditions. Uniqueness follows from the uniqueness in Lemma~\ref{lem:graph-rep}.
\end{proof}

\begin{remark}\label{rem:v-equal-tildev}
Let $(\lV,\widetilde\lV)$ satisfy the {(V)}-boundary conditions. Then
\[
    \lV=\widetilde\lV
\]
if and only if the operator $\phi$ in Theorem~\ref{thm:v-classif} is unitary. In this case,
\[
    \lV=\ker(\phi\Gamma_1-\Gamma_0).
\]
\end{remark}

\begin{remark}
It was proved in \cite{BES25}, that the {(V)}-boundary conditions are equivalent to $m$-accretivity of the corresponding realisation. Hence, $m$-accretive realisations are parametrised by non-expansive operators.
\end{remark}

The following result identifies the {(V)}-boundary conditions to which a boundary quadruple or a boundary triple can be associated.

\begin{theorem}\label{thm:v=bt}
Let $(T_0,\widetilde{T}_0)$ be a joint pair of closed abstract Friedrichs operators on $\lH$, and let $(\lV,\widetilde\lV)$ satisfy the \emph{(V)}-boundary conditions. Then the following assertions hold:
\begin{itemize}
    \item[\rm{(i)}] If
    \begin{equation}\label{eq:bq2v}
        \lV+\widetilde\lV=\lW,
    \end{equation}
    then there exists a boundary quadruple $(\lK_1,\lK_0,\Gamma_1,\Gamma_0)$ such that
    \begin{equation}\label{eq:v-kernels-bq}
        \lV=\ker\Gamma_0,
        \qquad
        \widetilde\lV=\ker\Gamma_1.
    \end{equation}
    Conversely, for every boundary quadruple $(\lK_1,\lK_0,\Gamma_1,\Gamma_0)$, the pair $(\ker\Gamma_0,\ker\Gamma_1)$ satisfies the \emph{(V)}-boundary conditions.

    \item[\rm{(ii)}] If
    \[
        \lV=\widetilde\lV,
    \]
    then there exists a boundary triple $(\gamma_0,\gamma_1,\lX)$ such that
    \begin{equation}\label{eq:v-kernel-bt}
        \lV=\ker\gamma_0.
    \end{equation}
    Conversely, for every boundary triple $(\lX, \gamma_1, \gamma_0)$, both $(\ker\gamma_0,\ker\gamma_0)$ and $(\ker\gamma_1,\ker\gamma_1)$ satisfy the \emph{(V)}-boundary conditions.
\end{itemize}
\end{theorem}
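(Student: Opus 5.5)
The converse statements are already available: for a boundary quadruple this is Corollary~\ref{cor:bqv}, and for a boundary triple it is Remark~\ref{rem:btv}. So the plan concentrates on the two existence claims. In each case I will build the boundary maps from projections onto the given subspaces, and then check (BQ1)/(BQ2), respectively (BT1)/(BT2).

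For (i), the first step is to show that $\lV\cap\widetilde\lV=\lW_0$ and that the sum $\lV+\widetilde\lV=\lW$ is topologically direct modulo $\lW_0$. If $\vu\in\lV\cap\widetilde\lV$, then (V1) gives $\iscp{\vu}{\vu}=0$. A neutral vector in the semidefinite subspace $\lV$ is $\iscp\cdot\cdot$-orthogonal to $\lV$, by the polarisation argument in the proof of Lemma~\ref{lem:complement}. It is also orthogonal to $\widetilde\lV=\lV^{[\perp]}$, because $\vu\in\lV$. Hence $\vu\in(\lV+\widetilde\lV)^{[\perp]}=\lW^{[\perp]}=\lW_0$. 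Passing to the quotient $\lW/\lW_0$, the images of $\lV$ and $\widetilde\lV$ are then closed and complementary. The induced projections $P,\widetilde P$ are therefore bounded, by the closed graph theorem.

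Next set $\lK_1:=\lV/\lW_0$ with inner product $\iscp\cdot\cdot$, and $\lK_0:=\widetilde\lV/\lW_0$ with $-\iscp\cdot\cdot$. The main obstacle is to show that these are Hilbert spaces, i.e.\ that the forms are definite and complete. The route I would take is to observe that, for $\vu\in\lV$, one has $\vu\in\lW_0\dotplus\ker\widetilde T_1 +\ker T_1$ with $p_{\mathrm k}\vu=U p_{\mathrm{\tilde k}}\vu$ and $\|U\|\le 1$, by Theorem~\ref{thm:von-classif}(ii). Then $\iscp{\vu}{\vu}=\|p_{\mathrm{\tilde k}}\vu\|^2-\|Up_{\mathrm{\tilde k}}\vu\|^2$, and this could degenerate when $\|U\|=1$. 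The answer is to use the hypothesis $\lV+\widetilde\lV=\lW$: the Kre\u{\i}n space $\lW/\lW_0$ splits as the orthogonal sum of the two images. A closed uniformly positive/negative decomposition of a Kre\u{\i}n space (a fundamental decomposition) has semidefinite parts that are in fact uniformly definite, with the form equivalent to the quotient norm; see \cite[Chapter~I]{Bo}. With that in hand, define $\Gamma_1:=$ the class of $P\vu$ in $\lK_1$ and $\Gamma_0:=$ the class of $\widetilde P\vu$ in $\lK_0$. Orthogonality of $\lV$ and $\widetilde\lV$ then gives $\iscp{\vu}{\vv}=\iscp{P\vu}{P\vv}+\iscp{\widetilde P\vu}{\widetilde P\vv}$, which is exactly (BQ1). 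Surjectivity (BQ2) is immediate, and the kernels are $\ker\Gamma_0=\lV$ and $\ker\Gamma_1=\widetilde\lV$.

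For (ii), the plan is to use a canonical boundary quadruple and rotate it with Corollary~\ref{cor:bt}. Take the quadruple of Theorem~\ref{thm:exist-bq}. Since $\lV=\widetilde\lV$, Theorem~\ref{thm:v-classif} together with Remark~\ref{rem:v-equal-tildev} gives $\lV=\ker(\phi\Gamma_1-\Gamma_0)$ with $\phi$ unitary. This makes $\lK_1$ and $\lK_0$ unitarily isomorphic, so Corollary~\ref{cor:bt} applies with $\theta:=\phi$. The map $\gamma_0=\frac1{\sqrt2}(\phi\Gamma_1-\Gamma_0)$ then has kernel $\lV$, and $(\lX,\gamma_1,\gamma_0)$ with $\lX=\lK_0$ is the required boundary triple.
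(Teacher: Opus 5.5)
Your proposal is correct and follows the paper's proof essentially step for step. In (i) you pass to $\lW/\lW_0$ and use $\lV\cap\widetilde\lV=\lW_0$ (which you justify, whereas the paper only asserts it), then invoke the same Kre\u{\i}n-space fact from \cite{Bo}: a direct, mutually orthogonal decomposition into a closed non-negative and a closed non-positive subspace has uniformly definite, hence Hilbert, components. In (ii) you write $\lV=\ker(\phi\Gamma_1-\Gamma_0)$ with $\phi$ unitary and apply Corollary~\ref{cor:bt} with $\theta=\phi$, exactly as the paper does. The von Neumann detour via $U$ is unnecessary. The Kre\u{\i}n-space fact should be stated with merely semidefinite hypotheses, not ``uniformly'' definite ones, since as you wrote it the sentence is circular.
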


\begin{proof}
The converse assertions follow from Corollary~\ref{cor:bqv} and Remark~\ref{rem:btv}, respectively.

\begin{itemize}
    \item[(i)] Let $q:\lW\to\lW/\lW_0$ be the quotient map, and put
    \[
        \lK_1:=q(\lV),
        \qquad
        \lK_0:=q(\widetilde\lV).
    \]
    Since $\lV\cap\widetilde\lV=\lW_0$ and \eqref{eq:bq2v} holds, and one has the direct decomposition
    \[
        \lW/\lW_0=\lK_1\dotplus\lK_0.
    \]
    Clearly, spaces $\lK_1$ and $\lK_0$ are positive- and negative-definite, respectively, and these subspaces are closed and mutually $[\perp]$-orthogonal. Since they form a closed fundamental decomposition of the Kre\u{\i}n space $\lW/\lW_0$, the forms
    \[
        \iscp{\cdot}{\cdot}
        \quad\text{on }\lK_1,
        \qquad
        -\iscp{\cdot}{\cdot}
        \quad\text{on }\lK_0
    \]
    are Hilbert inner products; cf.~\cite[Chapter V, p.~100]{Bo}.

    Let $P_1$ and $P_0$ be the projections associated with the above decomposition and define
    \[
        \Gamma_1:=P_1\circ q,
        \qquad
        \Gamma_0:=P_0\circ q.
    \]
    Then $\Gamma=(\Gamma_1,\Gamma_0)^\top$ becomes surjective and
    \[
        \iscp{\vu}{\vv}
        =\scp{\Gamma_1\vu}{\Gamma_1\vv}_{\lK_1}
        -\scp{\Gamma_0\vu}{\Gamma_0\vv}_{\lK_0},
        \qquad \vu,\vv\in\lW.
    \]
    Hence $(\lK_1,\lK_0,\Gamma_1,\Gamma_0)$ is a boundary quadruple. The identities in \eqref{eq:v-kernels-bq} follow immediately from the construction.

    \item[(ii)] Consider a boundary quadruple $(\lK_1,\lK_0,\Gamma_1,\Gamma_0)$. Assume $\lV=\widetilde\lV$, then by Theorem~\ref{thm:v-classif} and Remark~\ref{rem:v-equal-tildev}, there exists a unitary isomorphism $\phi:\lK_1\to\lK_0$ such that
    \[
        \lV=\ker(\phi\Gamma_1-\Gamma_0).
    \]
    Set $\lX:=\lK_0$ and define
    \[
        \gamma_0:=\frac{1}{\sqrt{2}}(\phi\Gamma_1-\Gamma_0),
        \qquad
        \gamma_1:=\frac{1}{\sqrt{2}}(\phi\Gamma_1+\Gamma_0).
    \]
    By Corollary~\ref{cor:bt}, $(\gamma_0,\gamma_1,\lX)$ is a boundary triple, and $\lV=\ker\gamma_0$.
\end{itemize}
\end{proof}

\begin{remark}
If $\lV=\widetilde\lV$, then one can also choose a boundary triple such that $\lV=\ker\gamma_1$. Indeed, with the notation from the proof of Theorem~\ref{thm:v=bt}(ii), define
\[
    \gamma_0:=\frac{1}{\sqrt{2}}(\phi\Gamma_1+\Gamma_0),
    \qquad
    \gamma_1:=\frac{1}{\sqrt{2}}(\phi\Gamma_1-\Gamma_0).
\]
\end{remark}

\subsection{(M)-boundary conditions}

The connection between the {(V)}- and {(M)}-boundary conditions has been studied in \cite{EGC,ABcpde}. We now express the multiplicity of the $M$-operators associated with a fixed {(V)}-boundary condition in terms of a boundary quadruple.

We first recall the following result from \cite[Theorem~8]{ABcpde}.

\begin{theorem}\label{thm:ABcpde-thm08}
Let $\lW$ be the graph space and let $D$ be the boundary operator.
\begin{itemize}
    \item[(i)] Suppose that $\lV$ satisfies the \emph{(V)}-boundary conditions. Then there exists a closed subspace $\lW_2\subseteq\lW^-$ such that
    \[
        \lW=\lV\dotplus\lW_2.
    \]
    If $\lW_2$ is such a subspace and
    \[
        \lW_1:=\lV\cap\lW_0^\perp,
    \]
    so that
    \[
        \lW=\lW_0\dotplus\lW_1\dotplus\lW_2,
    \]
    and $q_0,q_1,q_2$ are the corresponding projections, then
    \[
        M:=D(q_1-q_2)\in\mathcal{L}(\lW;\lW')
    \]
    satisfies the \emph{(M)}-boundary conditions and
    \[
        \lV=\ker(D-M).
    \]

    \item[(ii)] Conversely, let $M\in\mathcal{L}(\lW;\lW')$ satisfy the \emph{(M)}-boundary conditions and put
    \[
        \lV:=\ker(D-M).
    \]
    Then
    \[
        \lW_2:=\ker(D+M)\cap\lW_0^\perp
    \]
    is a closed subspace of $\lW^-$ and
    \[
        \lW=\lV\dotplus\lW_2.
    \]
\end{itemize}
\end{theorem}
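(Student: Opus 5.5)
The plan is to prove (i) by building the complement $\lW_2$ in boundary coordinates and then checking (M1)--(M2) for $M=D(q_1-q_2)$ by direct computation. For (ii) I would read off the required properties of $\lW_2$ from (M1), (M2) and Lemma~\ref{EGC:lem4.1}.

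\emph{Part (i), existence of $\lW_2$.} Fix any boundary quadruple, which exists by Theorem~\ref{thm:exist-bq}. By Theorem~\ref{thm:v-classif}, $\lV=\ker(\phi\Gamma_1-\Gamma_0)$ for a non-expansive $\phi$, so $\Gamma(\lV)=\gr\phi$. The subspace $\{0\}\oplus\lK_0$ is non-positive and complements $\gr\phi$ algebraically in $\lK$:
\[
(\xi_1,\xi_0)^\top=(\xi_1,\phi\xi_1)^\top+(0,\xi_0-\phi\xi_1)^\top .
\]
The intersection of the two is trivial, since $\xi_1=0$ forces $\phi\xi_1=0$. Its preimage is $\ker\Gamma_1=\lW_0\oplus(\ker\Gamma_1\cap\lW_0^\perp)$, an orthogonal splitting in the Hilbert space $\lW$. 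I would set $\lW_2:=\ker\Gamma_1\cap\lW_0^\perp$.
\begin{itemize}
\item $\lW_2$ is closed, and it lies in $\lW^-$ because $\iscp{\vu}{\vu}=-\|\Gamma_0\vu\|^2$ on $\ker\Gamma_1$.
\item The sum is direct: $\lV\cap\lW_2\subseteq \Gamma^{-1}(\gr\phi\cap(\{0\}\oplus\lK_0))\cap\lW_0^\perp=\lW_0\cap\lW_0^\perp=\{0\}$.
\item The sum is all of $\lW$: lift the boundary splitting above via surjectivity of $\Gamma$, and absorb the $\lW_0$-component into $\lV\supseteq\lW_0$.
\end{itemize}

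\emph{Part (i), the operator $M$.} Since $\lV$ is closed and contains $\lW_0$, we have $\lV=\lW_0\oplus\lW_1$ with $\lW_1=\lV\cap\lW_0^\perp$, hence $\lW=\lW_0\dotplus\lW_1\dotplus\lW_2$. All three summands are closed, so $q_0,q_1,q_2$ are bounded by the closed graph theorem. Therefore $M\in\mathcal L(\lW;\lW')$.

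For (M1), use that $\lW_0$ is the radical of the form. This gives
\[
\dup{\lW'}{M\vu}{\vu}{\lW}=\iscp{q_1\vu}{q_1\vu}-\iscp{q_2\vu}{q_2\vu}+\bigl(\iscp{q_1\vu}{q_2\vu}-\iscp{q_2\vu}{q_1\vu}\bigr).
\]
The bracket is purely imaginary, so the real part equals $\iscp{q_1\vu}{q_1\vu}-\iscp{q_2\vu}{q_2\vu}\ge0$, using $\lW_1\subseteq\lW^+$ and $\lW_2\subseteq\lW^-$.

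For (M2), note $D-M=D(q_0+2q_2)=2Dq_2$ because $Dq_0=0$. Thus $\ker(D-M)=\{\vu: q_2\vu\in\lW_0\cap\lW_2=\{0\}\}=\lW_0\dotplus\lW_1=\lV$. Symmetrically, $\ker(D+M)=\lW_0\dotplus\lW_2$. The two kernels together span $\lW$, which proves (M2) and also $\lV=\ker(D-M)$.

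\emph{Part (ii).} Take $\lW_2:=\ker(D+M)\cap\lW_0^\perp$; it is closed as an intersection of closed subspaces.
\begin{itemize}
\item Non-positivity: for $\vu\in\ker(D+M)$ we have $\iscp{\vu}{\vu}=\dup{\lW'}{D\vu}{\vu}{\lW}=-\Re\dup{\lW'}{M\vu}{\vu}{\lW}\le0$ by (M1). The left side is real, so taking real parts is legitimate.
\item Directness: if $\vu\in\lV\cap\ker(D+M)$, then $D\vu=M\vu=-M\vu$, so $D\vu=0$ and $\vu\in\lW_0$. Intersecting with $\lW_0^\perp$ gives $\vu=0$.
\item The sum is $\lW$: by Lemma~\ref{EGC:lem4.1}, $\ker M=\lW_0$, so $\lW_0\subseteq\ker(D+M)$ and $\ker(D+M)=\lW_0\oplus\lW_2$. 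Also $\lW_0\subseteq\lV$. Then (M2) gives $\lW=\lV+\ker(D+M)=\lV+\lW_2$.
\end{itemize}

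The main point needing care is the boundedness of the projections $q_i$, so that $M$ is a bounded operator. This follows because the decomposition is into closed subspaces of the Hilbert space $\lW$. The remaining computations are routine.
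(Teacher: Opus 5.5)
Your proof is correct. The paper gives no argument of its own for this statement; it imports it from \cite[Theorem~8]{ABcpde}, where it is obtained intrinsically from the Kre\u{\i}n-space structure of $\lW/\lW_0$. Your route instead runs through the paper's boundary-quadruple machinery. This is not circular, since Theorems~\ref{thm:exist-bq} and \ref{thm:v-classif} and Lemma~\ref{EGC:lem4.1} do not use the result.

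In effect you replace the abstract Kre\u{\i}n-space step (a maximal non-negative subspace is complemented by the negative part of a fundamental decomposition) by its coordinate form. There $\gr\phi$ is complemented by the axis $\{0\}\oplus\lK_0$, whose pull-back is $\ker\Gamma_1$, and you take $\lW_2=\ker\Gamma_1\cap\lW_0^\perp$. The remaining steps do not depend on coordinates and are sound:
\begin{itemize}
\item boundedness of $q_0,q_1,q_2$, which is most cleanly seen via the open mapping theorem for the sum map $\lW_0\times\lW_1\times\lW_2\to\lW$;
\item the identities $D-M=2Dq_2$ and $D+M=2Dq_1$;
\item part (ii), via $\ker M=\lW_0$.
\end{itemize}

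What your version buys is a direct link to Section~\ref{sec:bc}. Your $\lW_2$ is precisely $\lW_{2,\psi}^{\,0}$ with $\psi=0$ from Remark~\ref{rem:canonical-complement-scope}, so the operator you construct is the $M_0$ of \eqref{eq:M-psi-zero}. The cited intrinsic argument, by contrast, needs no choice of boundary coordinates.

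For the bare existence of $\lW_2$ there is an even shorter path inside the paper. The subspace $\ker T_1$ is closed, lies in $\lW^-$ by Theorem~\ref{thm:abstractFO-prop}(ix), and satisfies $\lW=\lV\dotplus\ker T_1$ by Remark~\ref{rem:Vcond}.
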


Let us first prove the following general result.

\begin{lemma}\label{lem:char-m}
Let $(T_0,\widetilde{T}_0)$ be a joint pair of closed abstract Friedrichs operators on $\lH$, let $(\lK_1,\lK_0,\Gamma_1,\Gamma_0)$ be a boundary quadruple, and let
\[
    \phi\in\mathcal{L}(\lK_1,\lK_0),
    \qquad
    \psi\in\mathcal{L}(\lK_0,\lK_1).
\]
Then the following assertions hold:
\begin{itemize}
    \item[(i)] The operator $\I_{\lK_1}-\psi\phi$ is injective if and only if
    \begin{equation}\label{eq:char-m1}
        \lW_0
        =\ker(\phi\Gamma_1-\Gamma_0)
        \cap\ker(\Gamma_1-\psi\Gamma_0).
    \end{equation}

    \item[(ii)] The operator $\I_{\lK_1}-\psi\phi$ is surjective if and only if
    \begin{equation}\label{eq:char-m2}
        \lW
        =\ker(\phi\Gamma_1-\Gamma_0)
        +\ker(\Gamma_1-\psi\Gamma_0).
    \end{equation}
\end{itemize}
\end{lemma}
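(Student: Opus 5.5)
Throughout, the plan is to push everything to the boundary space via the surjective map $\Gamma$, using that $\Gamma$ induces a bijection $\lW/\lW_0\to\lK$ (Proposition~\ref{prop:relation-param}) and that $\ker\Gamma=\lW_0$ (Lemma~\ref{lem:cont}(ii)). Both kernels contain $\lW_0$, so
\[
\Gamma\bigl(\ker(\phi\Gamma_1-\Gamma_0)\bigr)=\gr\phi=\{(\xi_1,\phi\xi_1)^\top:\xi_1\in\lK_1\},
\]
and
\[
\Gamma\bigl(\ker(\Gamma_1-\psi\Gamma_0)\bigr)=\{(\psi\xi_0,\xi_0)^\top:\xi_0\in\lK_0\}=:\Psi.
\]
By the correspondence, intersections and sums of subspaces containing $\lW_0$ are mapped to intersections and sums of their images. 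Hence \eqref{eq:char-m1} is equivalent to $\gr\phi\cap\Psi=\{0\}$, and \eqref{eq:char-m2} is equivalent to $\gr\phi+\Psi=\lK$. For the sum, one inclusion is clear. For the other, take $\mxi\in\lK$ and lift it to $\vw\in\lW$. If $\mxi=\Gamma\vu+\Gamma\vv$ with $\vu,\vv$ in the kernels, then $\vw-\vu-\vv\in\lW_0$, which can be absorbed into $\vu$.

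For (i), I would then compute the intersection directly. A vector lies in $\gr\phi\cap\Psi$ exactly when $(\xi_1,\xi_0)$ satisfies $\xi_0=\phi\xi_1$ and $\xi_1=\psi\xi_0$. This forces $(\I_{\lK_1}-\psi\phi)\xi_1=0$. Conversely, any $\xi_1\in\ker(\I_{\lK_1}-\psi\phi)$ gives the vector $(\xi_1,\phi\xi_1)$ in the intersection. Since $\xi_1=0$ forces $\xi_0=0$, the intersection is trivial if and only if $\I_{\lK_1}-\psi\phi$ is injective.

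For (ii), I would solve $(\eta_1,\eta_0)=(\xi_1,\phi\xi_1)+(\psi\xi_0,\xi_0)$. The second component gives $\xi_0=\eta_0-\phi\xi_1$. Substituting into the first gives
\[
(\I_{\lK_1}-\psi\phi)\xi_1=\eta_1-\psi\eta_0 .
\]
If $\I_{\lK_1}-\psi\phi$ is surjective, this equation is solvable for every pair $(\eta_1,\eta_0)$, so the sum is all of $\lK$. Conversely, if the sum is $\lK$, take $\eta_0=0$ and $\eta_1$ arbitrary. The decomposition then yields $\xi_1$ with $(\I_{\lK_1}-\psi\phi)\xi_1=\eta_1$, which gives surjectivity.

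The only step needing care is the transfer between $\lW$ and $\lK$. This uses surjectivity of $\Gamma$ and the fact that both kernels contain $\lW_0$. It is routine once written via $\widehat\Gamma\circ q$, so I expect no real obstacle.
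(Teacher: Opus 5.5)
Your proof is correct and is essentially the paper's argument. The only organisational difference is that you first transfer both kernel conditions to $\gr\phi\cap\Psi=\{0\}$ and $\gr\phi+\Psi=\lK$ via the correspondence of Proposition~\ref{prop:relation-param}, whereas the paper lifts boundary data through the surjectivity of $\Gamma$ inline; the key computations (the relation $\xi_1=\psi\phi\xi_1$ in (i), and in (ii) the equation $(\I_{\lK_1}-\psi\phi)\xi_1=\eta_1-\psi\eta_0$ together with the choice $\eta_0=0$ for the converse) are the same.
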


\begin{proof}
\begin{itemize}
    \item[(i)] Suppose that $\I_{\lK_1}-\psi\phi$ is injective and let
    \[
        \vu\in\ker(\phi\Gamma_1-\Gamma_0)
        \cap\ker(\Gamma_1-\psi\Gamma_0).
    \]
    Then
    \[
        (\I_{\lK_1}-\psi\phi)\Gamma_1\vu
        =\Gamma_1\vu-\psi\Gamma_0\vu=0.
    \]
    Hence $\Gamma_1\vu=0$, and then $\Gamma_0\vu=\phi\Gamma_1\vu=0$. Thus $\vu\in\lW_0$. The opposite inclusion is immediate.

    Conversely, suppose that \eqref{eq:char-m1} holds, and let $ \xi_1\in\ker(\I_{\lK_1}-\psi\phi)$.
    By the surjectivity of $\Gamma$, there exists $\vu\in\lW$ such that $\Gamma_1\vu=\xi_1$ and $\Gamma_0\vu=\phi\xi_1$.
    Then $\vu\in\ker(\phi\Gamma_1-\Gamma_0)$ and
    \[
        \Gamma_1\vu
        =\xi_1
        =\psi\phi\xi_1
        =\psi\Gamma_0\vu,
    \]
    so $\vu\in\ker(\Gamma_1-\psi\Gamma_0)$. By \eqref{eq:char-m1}, $\vu\in\lW_0$, and hence $\xi_1=\Gamma_1\vu=0$.

    \item[(ii)] Suppose that $\I_{\lK_1}-\psi\phi$ is surjective, and let $\vu\in\lW$. Set
    \[
        \xi_1:=\Gamma_1\vu,
        \qquad
        \xi_0:=\Gamma_0\vu.
    \]
    Choose $\mu_1\in\lK_1$ such that $(\I_{\lK_1}-\psi\phi)\mu_1
        =\xi_1-\psi\xi_0$,
    and put $\mu_0:=\xi_0-\phi\mu_1$.
    Then
    \[
        \mu_1+\psi\mu_0=\xi_1,
        \qquad
        \phi\mu_1+\mu_0=\xi_0.
    \]
    By the surjectivity of $\Gamma$, choose $\vu_1,\vu_2\in\lW$ such that $\Gamma_1\vu_1=\mu_1$, $\Gamma_0\vu_1=\phi\mu_1$, $\Gamma_1\vu_2=\psi\mu_0$ and $\Gamma_0\vu_2=\mu_0$.
    Thus,
    \[
        \vu_1\in\ker(\phi\Gamma_1-\Gamma_0),
        \qquad
        \vu_2\in\ker(\Gamma_1-\psi\Gamma_0).
    \]
    Moreover,
    \[
        \Gamma(\vu-\vu_1-\vu_2)=0,
    \]
    so $\vu-\vu_1-\vu_2\in\lW_0$. Since $\lW_0$ is contained in both kernels, \eqref{eq:char-m2} follows.

    Conversely, suppose that \eqref{eq:char-m2} holds, and let $\xi_1\in\lK_1$. Choose $\vu\in\lW$ such that $
        \Gamma_1\vu=\xi_1$ and $\Gamma_0\vu=0$.
    Write $\vu=\vu_1+\vu_2$, where
    \[
        \vu_1\in\ker(\phi\Gamma_1-\Gamma_0),
        \qquad
        \vu_2\in\ker(\Gamma_1-\psi\Gamma_0).
    \]
    Put
    \[
        \mu_1:=\Gamma_1\vu_1,
        \qquad
        \mu_0:=\Gamma_0\vu_2\,,
    \]
    then $\Gamma_0\vu_1=\phi\mu_1$ and $\Gamma_1\vu_2=\psi\mu_0$.
    Since $\Gamma_0\vu=0$, one has $\mu_0=-\phi\mu_1$, and therefore
    \[
        \xi_1
        =\Gamma_1\vu
        =\mu_1+\psi\mu_0
        =(\I_{\lK_1}-\psi\phi)\mu_1.
    \]
    Hence, $\I_{\lK_1}-\psi\phi$ is surjective.
\end{itemize}
\end{proof}

\begin{remark}\label{rem:schur}
The operator $\I_{\lK_1}-\psi\phi$ is bijective if and only if $\I_{\lK_0}-\phi\psi$ is bijective. In this case, with
\[
    S_1:=(\I_{\lK_1}-\psi\phi)^{-1},
    \qquad
    S_0:=(\I_{\lK_0}-\phi\psi)^{-1},
\]
one has
\begin{equation}\label{eq:schur}
    S_0=\I_{\lK_0}+\phi S_1\psi,
    \qquad
    S_1=\I_{\lK_1}+\psi S_0\phi.
\end{equation}
The operators $\I_{\lK_1}-\psi\phi$ and $\I_{\lK_0}-\phi\psi$ are the \emph{Schur complements} associated with
\[
    B:=
    \begin{pmatrix}
        \I_{\lK_1} & \psi\\
        \phi & \I_{\lK_0}
    \end{pmatrix};
\]
see \cite[Chapters~1 and~3]{Zhang05}. Moreover,
\begin{equation}\label{eq:B-inverse}
    B^{-1}
    =
    \begin{pmatrix}
        S_1 & -S_1\psi\\
        -S_0\phi & S_0
    \end{pmatrix}.
\end{equation}
\end{remark}

We can now parametrise all negative complements occurring in Theorem~\ref{thm:ABcpde-thm08}.

\begin{theorem}\label{thm:m-psi}
Let
\[
    \lV=\ker(\phi\Gamma_1-\Gamma_0),
\]
where $\phi:\lK_1\to\lK_0$ is non-expansive, and let $\lW_2\subseteq\lW$ be closed. Then the following assertions are equivalent:
\begin{itemize}
    \item[\emph{(i)}] $\lW_2\subseteq\lW^-$ and
    \begin{equation}\label{eq:sum-m-op}
        \lW=\lV\dotplus\lW_2;
    \end{equation}

    \item[\emph{(ii)}] there exists a non-expansive operator $\psi:\lK_0\to\lK_1$ such that $\I_{\lK_1}-\psi\phi$ is bijective and
    \begin{equation}\label{eq:W2-psi}
        \lW_0\dotplus\lW_2
        =\ker(\Gamma_1-\psi\Gamma_0).
    \end{equation}
\end{itemize}
\end{theorem}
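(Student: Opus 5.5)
The plan is to pass everything to the boundary space $\lK$ via $\Gamma$, using Proposition~\ref{prop:relation-param}, and then combine Lemma~\ref{lem:graph-rep}, Lemma~\ref{lem:complement} and Lemma~\ref{lem:char-m}. Put $\Phi:=\Gamma(\lV)=\gr\phi$, which is maximal non-negative by Lemma~\ref{lem:graph-rep}(i).

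\emph{(i)$\Rightarrow$(ii).} Set $\Psi:=\Gamma(\lW_2)=\Gamma(\lW_0\dotplus\lW_2)$. I would first check that $\lW_0\dotplus\lW_2$ is a closed subspace of $\lW$. The sum is direct because $\lW_0\subseteq\lV$ and $\lV\cap\lW_2=\{0\}$. For closedness, note that $\lW=\lV\dotplus\lW_2$ with both summands closed, so the projection onto $\lW_2$ along $\lV$ is bounded. Then $\lW_0\dotplus\lW_2$ is the preimage of $\lW_0$ under the projection onto $\lV$, hence closed. By \eqref{eq:closed-relation-V}, $\Psi$ is closed. Since $\Gamma$ preserves the form by \eqref{eq:BQ} and $\lW_0$ is the radical, $\lW_0\dotplus\lW_2\subseteq\lW^-$, so $\Psi$ is non-positive.

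Next, $\lK=\Phi\dotplus\Psi$. The sum is all of $\lK$ because $\Gamma$ is surjective and $\lW=\lV+\lW_2$. It is direct because $\Gamma\vu=\Gamma\vw$ with $\vu\in\lV$ and $\vw\in\lW_2$ gives $\vu-\vw\in\lW_0\subseteq\lV$, hence $\vw\in\lV\cap\lW_2=\{0\}$. Lemma~\ref{lem:complement} then makes $\Psi$ maximal non-positive. By Lemma~\ref{lem:graph-rep}(ii), $\Psi=\{(\psi\xi_0,\xi_0)^\top\}$ for a unique non-expansive $\psi$, and pulling back gives \eqref{eq:W2-psi}.

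Finally, the decomposition $\lW=\lV\dotplus\lW_2$ yields
\[
\lV\cap\ker(\Gamma_1-\psi\Gamma_0)=\lV\cap(\lW_0\dotplus\lW_2)=\lW_0
\quad\text{and}\quad
\lV+\ker(\Gamma_1-\psi\Gamma_0)=\lW .
\]
Lemma~\ref{lem:char-m}(i),(ii) then give that $\I_{\lK_1}-\psi\phi$ is bijective.

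\emph{(ii)$\Rightarrow$(i).} Since $\psi$ is non-expansive, $\ker(\Gamma_1-\psi\Gamma_0)$ is non-positive: for $\vu$ in it,
\[
\iscp{\vu}{\vu}=\|\psi\Gamma_0\vu\|^2-\|\Gamma_0\vu\|^2\le 0 .
\]
Hence $\lW_2\subseteq\lW^-$. Bijectivity of $\I_{\lK_1}-\psi\phi$ together with Lemma~\ref{lem:char-m} gives
\[
\lV\cap(\lW_0\dotplus\lW_2)=\lW_0
\quad\text{and}\quad
\lV+(\lW_0\dotplus\lW_2)=\lW .
\]
Because $\lW_0\subseteq\lV$, the second identity becomes $\lV+\lW_2=\lW$. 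For directness, take $\vw\in\lV\cap\lW_2$; then $\vw\in\lW_0\cap\lW_2=\{0\}$, where $\lW_0\cap\lW_2=\{0\}$ is implicit in the notation $\lW_0\dotplus\lW_2$ in (ii). This gives \eqref{eq:sum-m-op}.

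The main obstacle I expect is the closedness of $\lW_0\dotplus\lW_2$ in the first direction. It is needed both to apply Lemma~\ref{lem:complement} and to identify $\Psi$ through Lemma~\ref{lem:graph-rep}(ii), and the bounded-projection argument above should settle it. A secondary point is reading (ii) correctly, namely that the direct sum $\lW_0\dotplus\lW_2$ presupposes $\lW_0\cap\lW_2=\{0\}$.
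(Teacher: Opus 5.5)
Your proof is correct and follows the paper's argument essentially step by step. You transfer everything to $\lK$ via $\Gamma$, obtain $\psi$ from Lemma~\ref{lem:complement} and Lemma~\ref{lem:graph-rep}(ii), and invoke Lemma~\ref{lem:char-m} in both directions. You also supply a step the paper only asserts: closedness of $\lW_0\dotplus\lW_2$, obtained as the preimage of $\lW_0$ under the bounded projection onto $\lV$ along $\lW_2$.
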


\begin{proof}
Assume first that {\rm (i)} holds, and set
\[
    \lV':=\lW_0\dotplus\lW_2.
\]
Since $\lW=\lV\dotplus\lW_2$ and $\lW_0\subseteq\lV$, the subspace
$\lV'$ is closed. Moreover,
\[
    \lV\cap\lV'=\lW_0,
    \qquad
    \lV+\lV'=\lW.
\]
Set
\[
    \Phi:=\Gamma(\lV)=\gr\phi,
    \qquad
    \Psi:=\Gamma(\lV').
\]
By Proposition~\ref{prop:relation-param}, $\Psi$ is closed, and the
preceding identities imply
\[
    \lK=\Phi\dotplus\Psi.
\]
Furthermore, $\Psi$ is non-positive because
$\lV'\subseteq\lW^-$. Since $\phi$ is non-expansive,
Lemma~\ref{lem:graph-rep}(i) shows that $\Phi$ is maximal
non-negative. Hence Lemma~\ref{lem:complement} implies that $\Psi$ is
maximal non-positive.

By Lemma~\ref{lem:graph-rep}(ii), there exists a unique
non-expansive operator $\psi:\lK_0\to\lK_1$ such that
\[
    \Psi
    =
    \left\{
        \begin{pmatrix}
            \psi\xi_0\\
            \xi_0
        \end{pmatrix}
        :
        \xi_0\in\lK_0
    \right\}.
\]
Since $\lV'$ contains $\ker\Gamma=\lW_0$, it follows that \[\lV'
    =
    \ker(\Gamma_1-\psi\Gamma_0)\;.\]
Finally, the conditions \eqref{eq:char-m1} and \eqref{eq:char-m2} of Lemma~\ref{lem:char-m} are satisfied, implying
that $\I_{\lK_1}-\psi\phi$ is bijective. Thus {\rm (ii)} holds.

Conversely, assume {\rm (ii)} and put
\[
    \lV'
    :=
    \ker(\Gamma_1-\psi\Gamma_0)
    =
    \lW_0\dotplus\lW_2.
\]
Since $\psi$ is non-expansive,
\[
    \lV'\subseteq\lW^-,
\]
and hence $\lW_2\subseteq\lW^-$. By Lemma~\ref{lem:char-m}, the
bijectivity of $\I_{\lK_1}-\psi\phi$ gives
\[
    \lV\cap\lV'=\lW_0,
    \qquad
    \lV+\lV'=\lW.
\]
Since $\lW_0\subseteq\lV$ and
$\lV'=\lW_0\dotplus\lW_2$, these identities reduce to
\[
    \lW=\lV\dotplus\lW_2.
\]
Therefore {\rm (i)} holds.
   
\end{proof}

\begin{remark}\label{rem:canonical-complement-scope}
For a fixed $\psi$, equation \eqref{eq:W2-psi} determines the enlarged negative subspace $\lW_0\dotplus\lW_2$, but not necessarily a unique complement of $\lW_0$ inside that subspace. In the construction below we choose the canonical Hilbert-space complement
\[
    \lW_{2,\psi}^{\,0}
    :=\ker(\Gamma_1-\psi\Gamma_0)\cap\lW_0^\perp.
\]
Accordingly, the resulting operators form a canonical family associated with the admissible parameters $\psi$.
\end{remark}

\begin{remark}
A canonical admissible choice is $\psi=-\phi^*$. Indeed, $\psi$ is non-expansive and
\[
    \I_{\lK_1}-\psi\phi
    =\I_{\lK_1}+\phi^*\phi
\]
is bijective. Thus,
\[
    \lW_0\dotplus\lW_2
    =\ker(\Gamma_1+\phi^*\Gamma_0)
\]
defines one of the complements occurring in Theorem~\ref{thm:ABcpde-thm08}.
\end{remark}

For the explicit representation of the associated $M$-operator, let
\[
    \Phi:=\left\{(\xi_1,\phi\xi_1)^\top:\xi_1\in\lK_1\right\},
    \qquad
    \Psi:=\left\{(\psi\xi_0,\xi_0)^\top:\xi_0\in\lK_0\right\}.
\]
Let $\lM:=\lW_0^\perp$, where the orthogonal complement is taken in the graph space $\lW$, and let
\[
    \widetilde\Gamma:=\Gamma|_{\lM}:\lM\to\lK.
\]
Then $\widetilde\Gamma$ is a topological isomorphism. Its inverse satisfies
\begin{equation}\label{eq:Gamma-right-inverse}
    \Gamma\widetilde\Gamma^{-1}=\I_{\lK},
    \qquad
    \widetilde\Gamma^{-1}\Gamma=\mathcal{P}_{\lM},
\end{equation}
where $\mathcal{P}_{\lM}$ is the Hilbert-space orthogonal projection of $\lW$ onto $\lM$.

Let
\begin{equation}\label{eq:B-def}
    B:=
    \begin{pmatrix}
        \I_{\lK_1} & \psi\\
        \phi & \I_{\lK_0}
    \end{pmatrix}.
\end{equation}
Under the equivalent conditions of Theorem~\ref{thm:m-psi}, $B$ is bijective and
\begin{equation}\label{eq:decomposition}
    \lW
    =\lW_0
    \dotplus\widetilde\Gamma^{-1}(\Phi)
    \dotplus\widetilde\Gamma^{-1}(\Psi).
\end{equation}
The projections associated with this decomposition are
\begin{equation}\label{eq:projectors}
    p_1
    =\widetilde\Gamma^{-1}
    \begin{pmatrix}
        \I_{\lK_1} & 0\\
        \phi & 0
    \end{pmatrix}
    B^{-1}\Gamma,
    \qquad
    p_2
    =\widetilde\Gamma^{-1}
    \begin{pmatrix}
        0 & \psi\\
        0 & \I_{\lK_0}
    \end{pmatrix}
    B^{-1}\Gamma.
\end{equation}
The boundary operator has the representation
\begin{equation}\label{eq:rep-d}
    D=\Gamma'\J_{\lK}\Gamma\,,
\end{equation}
where $\Gamma':\lK\to \lW'$ is the dual-adjoint defined as
\begin{align*}
    \dup{\lW'}{\Gamma'\mxi}{\vu}{\lW}=\scp{\mxi}{\Gamma\vu}_\lK
\end{align*}
Therefore, the operator $M_\psi:=D(p_1-p_2)$ can be written as
\begin{equation}\label{eq:rep-M}
    M_\psi
    =\Gamma'
    \begin{pmatrix}
        \I_{\lK_1} & -\psi\\
        -\phi & \I_{\lK_0}
    \end{pmatrix}
    B^{-1}\Gamma.
\end{equation}

We first verify the projection formulas in \eqref{eq:projectors}.

\begin{lemma}\label{lem:mpsi}
Let $p_1$ and $p_2$ be defined by \eqref{eq:projectors}. Then
\begin{itemize}
    \item[(i)] $p_1^2=p_1$ and $p_2^2=p_2$;
    \item[(ii)] $p_1p_2=p_2p_1=0$;
    \item[(iii)] $D(p_1+p_2)=D$.
\end{itemize}
\end{lemma}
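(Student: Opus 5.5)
The plan is to reduce all three assertions to identities for bounded operators on $\lK$, using \eqref{eq:Gamma-right-inverse}. Introduce the abbreviations
\[
    E_1:=\begin{pmatrix}\I_{\lK_1}&0\\ \phi&0\end{pmatrix},
    \qquad
    E_2:=\begin{pmatrix}0&\psi\\ 0&\I_{\lK_0}\end{pmatrix},
\]
so that $p_j=\widetilde\Gamma^{-1}E_jB^{-1}\Gamma$ for $j=1,2$. The basic observation is that $E_1+E_2=B$. This is immediate from \eqref{eq:B-def}, and $B$ is bijective by Remark~\ref{rem:schur} under the conditions of Theorem~\ref{thm:m-psi}.

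For (i) and (ii), I would first use $\Gamma\widetilde\Gamma^{-1}=\I_{\lK}$ to compute
\[
    p_ip_j=\widetilde\Gamma^{-1}E_iB^{-1}E_jB^{-1}\Gamma .
\]
It therefore suffices to show that the operators $F_j:=E_jB^{-1}$ satisfy $F_iF_j=\delta_{ij}F_j$. Since $F_1+F_2=BB^{-1}=\I_{\lK}$, the assertions $F_1F_2=F_2F_1=0$ and $F_j^2=F_j$ both follow from one of them. For example, $F_1^2=F_1(\I-F_2)=F_1-F_1F_2$, and $F_1F_2=(\I-F_2)F_2=F_2-F_2^2$. So the single thing to check is $F_1F_2=0$, that is,
\[
    E_1B^{-1}E_2=0 .
\]
To check this, I would use the explicit inverse \eqref{eq:B-inverse}. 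The first block column of $B^{-1}E_2$ is zero. Its second block column is
\[
    \begin{pmatrix}S_1\psi-S_1\psi\\ -S_0\phi\psi+S_0\end{pmatrix}
    =\begin{pmatrix}0\\ S_0(\I_{\lK_0}-\phi\psi)\end{pmatrix}
    =\begin{pmatrix}0\\ \I_{\lK_0}\end{pmatrix}.
\]
Hence $B^{-1}E_2=\begin{pmatrix}0&0\\0&\I_{\lK_0}\end{pmatrix}$. Since $E_1$ annihilates the second component, $E_1B^{-1}E_2=0$.

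A cleaner way to see this is to note that $B$ maps $\{0\}\oplus\lK_0$ onto $\Psi=\ran E_2$, and $E_1$ kills $\{0\}\oplus\lK_0$. This also confirms that the ranges of $p_1$ and $p_2$ are $\widetilde\Gamma^{-1}(\Phi)$ and $\widetilde\Gamma^{-1}(\Psi)$, respectively.

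For (iii), I would compute directly:
\[
    p_1+p_2=\widetilde\Gamma^{-1}(E_1+E_2)B^{-1}\Gamma=\widetilde\Gamma^{-1}\Gamma=\mathcal P_{\lM}.
\]
Hence $\vu-(p_1+p_2)\vu\in\lW_0=\ker D$, so $D(p_1+p_2)=D$. Equivalently, by \eqref{eq:rep-d}, $\Gamma\mathcal P_{\lM}=\Gamma$ because $\ker\Gamma=\lW_0$.

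I expect the only genuine point to be the block computation $B^{-1}E_2=\operatorname{diag}(0,\I_{\lK_0})$. It depends on the Schur complement identities \eqref{eq:schur} and on the bijectivity of $B$, both of which are already available.
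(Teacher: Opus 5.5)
Your proof is correct and follows the paper's route. You reduce to the operators $E_jB^{-1}$ on $\lK$ via $\Gamma\widetilde\Gamma^{-1}=\I_{\lK}$, show that they are complementary idempotents with $E_1B^{-1}+E_2B^{-1}=\I_{\lK}$, and obtain (iii) from $E_1+E_2=B$ together with $\ker\Gamma=\ker D=\lW_0$; the paper phrases this last step through $D=\Gamma'\J_{\lK}\Gamma$.

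The one difference lies in how idempotence is obtained. The paper gets it from the uniqueness of the decomposition $\lK=\Phi\dotplus\Psi$, since $E_1B^{-1}\mxi\in\Phi$ and $E_2B^{-1}\mxi\in\Psi$. You instead check $E_1B^{-1}E_2=0$ directly, and your ``cleaner'' observation $E_2=B\operatorname{diag}(0,\I_{\lK_0})$ shows that this needs only the bijectivity of $B$, not the Schur identities.
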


\begin{proof}
Set
\[
    B_1:=
    \begin{pmatrix}
        \I_{\lK_1} & 0\\
        \phi & 0
    \end{pmatrix},
    \qquad
    B_2:=
    \begin{pmatrix}
        0 & \psi\\
        0 & \I_{\lK_0}
    \end{pmatrix},
\]
and put
\[
    P_1:=B_1B^{-1},
    \qquad
    P_2:=B_2B^{-1}.
\]
For $\mxi\in\lK$, write
\[
    B^{-1}\mxi=(\mu_1,\mu_0)^\top.
\]
Then
\[
    P_1\mxi=(\mu_1,\phi\mu_1)^\top\in\Phi,
    \qquad
    P_2\mxi=(\psi\mu_0,\mu_0)^\top\in\Psi,
\]
and
\[
    \mxi=P_1\mxi+P_2\mxi.
\]
Thus $P_1$ and $P_2$ are the complementary projections of $\lK=\Phi\dotplus\Psi$. Consequently,
\[
    P_1^2=P_1,
    \qquad
    P_2^2=P_2,
    \qquad
    P_1P_2=P_2P_1=0,
    \qquad
    P_1+P_2=\I_{\lK}.
\]
Using \eqref{eq:Gamma-right-inverse}, we obtain
\[
    p_ip_j
    =\widetilde\Gamma^{-1}P_iP_j\Gamma,
    \qquad i,j\in\{1,2\},
\]
which proves \rm(i) and \rm(ii). Finally,
\begin{align*}
    D(p_1+p_2)
    &=\Gamma'\J_{\lK}\Gamma
      \widetilde\Gamma^{-1}(P_1+P_2)\Gamma\\
    &=\Gamma'\J_{\lK}\Gamma
    =D.
\end{align*}
\end{proof}

By Theorem \ref{thm:ABcpde-thm08}, the operator $M_{\psi}$ is constructed so that it is indeed an $M-$operator with $\ker(D-M_\psi)=\ker(\phi \Gamma_1-\Gamma_0)$ and $\ker(D+M_\psi)=\ker(\Gamma_1-\psi\Gamma_0)$. 
Moreover, the projections $p_1$ and $p_2$ fit into the framework of Theorem \ref{thm:ABcpde-thm08}. Hence, we refer to the proof of Theorem \ref{thm:ABcpde-thm08}, in \cite[Theorem 8]{ABcpde} to conclude the following result.
\begin{theorem}\label{thm:m-op}
Let $\phi:\lK_1\to\lK_0$ and $\psi:\lK_0\to\lK_1$ be non-expansive, and assume that $\I_{\lK_1}-\psi\phi$ is bijective. Then the operator $M_\psi$ in \eqref{eq:rep-M} satisfies the \emph{(M)}-boundary conditions, and
\[
    \ker(D-M_\psi)=\ker(\phi\Gamma_1-\Gamma_0),\qquad
    \ker(D+M_\psi)=\ker(\Gamma_1-\psi\Gamma_0).
\]
\end{theorem}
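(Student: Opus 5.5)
The plan is to verify the (M)-conditions directly from the representation \eqref{eq:rep-M}, rather than going through \cite[Theorem~8]{ABcpde}. Set $\lV=\ker(\phi\Gamma_1-\Gamma_0)$ and $\lV'=\ker(\Gamma_1-\psi\Gamma_0)$. By Lemma~\ref{lem:char-m} and the bijectivity of $\I_{\lK_1}-\psi\phi$, we have $\lV\cap\lV'=\lW_0$ and $\lV+\lV'=\lW$, so $\lK=\Phi\dotplus\Psi$. By Lemma~\ref{lem:mpsi}, $p_1,p_2$ are the projections onto $\widetilde\Gamma^{-1}(\Phi)$ and $\widetilde\Gamma^{-1}(\Psi)$ along the decomposition \eqref{eq:decomposition}. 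Since $\Gamma\widetilde\Gamma^{-1}=\I_{\lK}$, we have $\Gamma p_i=P_i\Gamma$. Combined with \eqref{eq:rep-d}, this gives $M_\psi=\Gamma'\J_{\lK}(P_1-P_2)\Gamma$. Multiplying out $\J_{\lK}(B_1-B_2)B^{-1}$ reproduces \eqref{eq:rep-M}, which fixes the formula we work with. Boundedness of $M_\psi$ is clear, since all factors are bounded.

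\textbf{Kernels.} Because $\Gamma'$ is injective ($\Gamma$ is surjective), we get
\[
D\mp M_\psi=\Gamma'\J_{\lK}\bigl(\I_{\lK}\mp(P_1-P_2)\bigr)\Gamma ,
\]
that is, $D-M_\psi=2\Gamma'\J_{\lK}P_2\Gamma$ and $D+M_\psi=2\Gamma'\J_{\lK}P_1\Gamma$. Hence $\vu\in\ker(D-M_\psi)$ if and only if $P_2\Gamma\vu=0$, i.e.\ $\Gamma\vu\in\Phi$, i.e.\ $\vu\in\lV$. Likewise $\ker(D+M_\psi)=\Gamma^{-1}(\Psi)=\lV'$. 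Condition (M2) is then exactly $\lV+\lV'=\lW$, which Lemma~\ref{lem:char-m}(ii) already gives.

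\textbf{Non-negativity (M1).} Write $\Gamma\vu=\mxi=\mxi_\Phi+\mxi_\Psi$ with $\mxi_\Phi\in\Phi$ and $\mxi_\Psi\in\Psi$. Then
\[
\dup{\lW'}{M_\psi\vu}{\vu}{\lW}=\iscp{\mxi_\Phi-\mxi_\Psi}{\mxi_\Phi+\mxi_\Psi}_{\lK}
=\iscp{\mxi_\Phi}{\mxi_\Phi}_{\lK}-\iscp{\mxi_\Psi}{\mxi_\Psi}_{\lK}+2i\,\Im\iscp{\mxi_\Phi}{\mxi_\Psi}_{\lK}.
\]
Taking real parts gives $\iscp{\mxi_\Phi}{\mxi_\Phi}_{\lK}-\iscp{\mxi_\Psi}{\mxi_\Psi}_{\lK}\geq0$. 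Here $\iscp{\mxi_\Phi}{\mxi_\Phi}\ge0$ because $\|\phi\|\le1$, and $\iscp{\mxi_\Psi}{\mxi_\Psi}\le0$ because $\|\psi\|\le1$.

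\textbf{Main obstacle.} The only delicate point is confirming the identity $\Gamma p_i=P_i\Gamma$ and getting the sign and adjoint conventions of $\Gamma'$ and $\J_{\lK}$ right. Concretely, one must check that $\dup{\lW'}{\Gamma'\J_{\lK}\meta}{\vu}{\lW}=\iscp{\meta}{\Gamma\vu}_{\lK}$, and that the matrix in \eqref{eq:rep-M} agrees with $\J_{\lK}(B_1-B_2)B^{-1}$ and not with $(B_1-B_2)B^{-1}$ alone. I would settle this first by evaluating both sides on $\Phi$ and on $\Psi$ separately. Everything else reduces to Lemmas~\ref{lem:char-m} and~\ref{lem:mpsi}.
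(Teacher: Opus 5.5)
Your proof is correct, but it follows a different route from the paper. The paper never checks (M1) and (M2) by hand. It notes that $p_1,p_2$ coincide with the projections $q_1,q_2$ of Theorem~\ref{thm:ABcpde-thm08}(i) for $\lW_1=\widetilde\Gamma^{-1}(\Phi)=\lV\cap\lW_0^\perp$ and $\lW_2=\widetilde\Gamma^{-1}(\Psi)\subseteq\lW^-$, with $\lW=\lV\dotplus\lW_2$ by Theorem~\ref{thm:m-psi}. Thus $M_\psi=D(p_1-p_2)$ is an instance of the construction in \cite[Theorem~8]{ABcpde}, and the paper defers to that proof.

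You instead transport everything to the Kre\u{\i}n space $\lK$ via $M_\psi=\Gamma'\J_{\lK}(P_1-P_2)\Gamma$. This gives $D-M_\psi=2\Gamma'\J_{\lK}P_2\Gamma$, $D+M_\psi=2\Gamma'\J_{\lK}P_1\Gamma$ and $\Re\dup{\lW'}{M_\psi\vu}{\vu}{\lW}=\iscp{\mxi_\Phi}{\mxi_\Phi}_{\lK}-\iscp{\mxi_\Psi}{\mxi_\Psi}_{\lK}$. The points you flagged as the main obstacle are routine. $\J_{\lK}(B_1-B_2)$ is literally the matrix in \eqref{eq:rep-M}, and $\dup{\lW'}{\Gamma'\J_{\lK}\meta}{\vu}{\lW}=\iscp{\meta}{\Gamma\vu}_{\lK}$ is just the definition of $\Gamma'$.

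Your route has several advantages:
\begin{itemize}
\item It is self-contained.
\item It proves $\ker(D+M_\psi)=\ker(\Gamma_1-\psi\Gamma_0)$ explicitly. Theorem~\ref{thm:ABcpde-thm08}(i) does not state this, and on the paper's route it needs the extra observation $D+D(q_1-q_2)=2Dq_1$.
\item It shows where each hypothesis enters: bijectivity of $\I_{\lK_1}-\psi\phi$ alone gives the kernels, (M2) and boundedness, while non-expansiveness is used only for (M1).
\item It shows that $M_\psi$ does not depend on the choice of the complement $\lW_0^\perp$, since only $\Gamma p_i=P_i\Gamma$ enters.
\end{itemize}
The paper's route is shorter and places $M_\psi$ inside the established $D(q_1-q_2)$ family, which ties the parametrisation by $\psi$ to the known (V)--(M) correspondence.
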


\begin{remark}
Two useful choices are $\psi=0$ and $\psi=-\phi^*$. For $\psi=0$, one has
\[
    \lW=\dom T_\phi+\ker\Gamma_1,
\]
and \eqref{eq:rep-M} reduces to
\begin{equation}\label{eq:M-psi-zero}
    \dup{\lW'}{M_0\vu}{\vv}{\lW}
    =\scp{\Gamma_1\vu}{\Gamma_1\vv}_{\lK_1}
    +\scp{\Gamma_0\vu}{\Gamma_0\vv}_{\lK_0}
    -2\scp{\phi\Gamma_1\vu}{\Gamma_0\vv}_{\lK_0}.
\end{equation}
For $\psi=-\phi^*$, the required bijectivity follows from the invertibility of $\I_{\lK_1}+\phi^*\phi$.
\end{remark}

\section{Examples}\label{sec:examples}

We first illustrate the theory on a first-order ordinary differential operator.

\begin{example}\label{ex:first-order-ode}
Let
\[
    \lH=L^2(a,b;\C),
    \qquad
    \lD=C_c^\infty(a,b),
\]
and let $\beta\in L^\infty(a,b;\R)$ satisfy $\beta\geq\mu>0$ almost everywhere on $(a,b)$. Define
\begin{equation}\label{eq:ode-pair}
    Tu=u'+\beta u,
    \qquad
    \widetilde Tu=-u'+\beta u,
    \qquad u\in\lD.
\end{equation}
Then $(T_0,\widetilde T_0)$ is a joint pair of closed abstract Friedrichs operators for $T_0=\overline T$ and $\widetilde T_0=\overline{\widetilde T}$. Its graph space and minimal space are
\[
    \lW=H^1(a,b),
    \qquad
    \lW_0=H_0^1(a,b),
\]
and the boundary form is
\begin{equation}\label{eq:ode-boundary-form}
    \iscp{u}{v}
    =u(b)\overline{v(b)}-u(a)\overline{v(a)},
    \qquad u,v\in\lW.
\end{equation}
For direct analyses of this example, we refer to \cite{AEM-2017,ES22}; the von Neumann parametrisation is discussed in \cite[Example 3.6]{ES23}, and the corresponding {(M)}-boundary conditions are considered in \cite[Example 5.6]{BES25}.

\medskip
\noindent\textbf{Boundary quadruple.}
Set
\[
    \lK_1=\lK_0=\C,
    \qquad
    \Gamma_1u=u(b),
    \qquad
    \Gamma_0u=u(a).
\]
Then $(\lK_1,\lK_0,\Gamma_1,\Gamma_0)$ is a boundary quadruple. For $\alpha\in\C$, let
\[
    \phi_\alpha\xi=\alpha\xi,
    \qquad
    \lV_\alpha
    :=\ker(\phi_\alpha\Gamma_1-\Gamma_0)
    =\{u\in H^1(a,b):u(a)=\alpha u(b)\}.
\]
Put
\[
    c_\beta:=\exp\left(-\int_a^b\beta(s)\,ds\right).
\]
Since $\beta\geq\mu>0$, one has $0<c_\beta<1$. If $\vv\in\ker T_1$, then
\[
    \vv(x)=\vv(a)\exp\left(-\int_a^x\beta(s)\,ds\right),
\]
and hence the zero-reference operator is
\[
    Q_0=c_\beta\I_\C.
\]
Therefore, Theorem~\ref{thm:classification} gives
\begin{itemize}
    \item $T_1|_{\lV_\alpha}$ is bijective if and only if
    \[
        1-\alpha c_\beta\neq0;
    \]

    \item $(\lV_\alpha,\lV_\alpha^{[\perp]})$ satisfies the {(V)}-boundary conditions if and only if
    \[
        |\alpha|\leq1;
    \]

    \item $\lV_\alpha=\lV_\alpha^{[\perp]}$ if and only if
    \[
        |\alpha|=1.
    \]
\end{itemize}
In particular, every non-expansive parameter gives a bijective realisation because $c_\beta<1$.

\medskip
\noindent\textbf{{(M)}-boundary conditions.}
Fix $\alpha,\tau\in[-1,1]$ such that $1-\alpha\tau\neq0$, and identify $\phi$ and $\psi$ with multiplication by $\alpha$ and $\tau$, respectively. The matrix in Theorem~\ref{thm:m-op} becomes
\begin{equation}\label{eq:ode-M-matrix}
    C_{\alpha,\tau}
    :=
    \begin{pmatrix}
        1 & -\tau\\
        -\alpha & 1
    \end{pmatrix}
    \begin{pmatrix}
        1 & \tau\\
        \alpha & 1
    \end{pmatrix}^{-1}
    =
    \frac{1}{1-\alpha\tau}
    \begin{pmatrix}
        1+\alpha\tau & -2\tau\\
        -2\alpha & 1+\alpha\tau
    \end{pmatrix}.
\end{equation}
Consequently,
\begin{align}\label{eq:ode-M-form}
    \dup{\lW'}{M_\tau u}{v}{\lW}
    =\frac{1}{1-\alpha\tau}\bigl(&
        (1+\alpha\tau)u(b)\overline{v(b)}
        -2\tau u(a)\overline{v(b)}
        \notag\\
        &-2\alpha u(b)\overline{v(a)}
        +(1+\alpha\tau)u(a)\overline{v(a)}
    \bigr)
\end{align}
for all $u,v\in\lW$. For $\tau=0$, this reduces to
\begin{equation}\label{eq:ode-M-zero}
    \dup{\lW'}{M_0u}{v}{\lW}
    =u(b)\overline{v(b)}+u(a)\overline{v(a)}
    -2\alpha u(b)\overline{v(a)}.
\end{equation}
\end{example}

\begin{example}[stationary diffusion equation]\label{ex:stationary-diffusion}
Let $\Omega\subset\R^d$ be an open and bounded set with Lipschitz boundary
$\partial\Omega$. Consider the second-order equation
\begin{equation*}
    -\Delta u+u=f.
\end{equation*}
It can be written as the first-order system
\begin{equation}\label{eq:SDE-system}
    \left\{
    \begin{array}{ll}
      \mp=-\nabla u,\\[1mm]
      \operatorname{div}\mp+u=f.
    \end{array}
    \right.
\end{equation}
Let
\[
    \mA_k=\ve_k\otimes\ve_{d+1}+\ve_{d+1}\otimes\ve_k
    \in \mathrm{M}_{d+1}(\R),
    \qquad k=1,\ldots,d,
\]
where $(\ve_1,\ldots,\ve_{d+1})$ is the standard basis of $\R^{d+1}$,
and let $\mB$ be the identity matrix of order $d+1$. On $C_c^\infty(\Omega;\C^{d+1})$, define the preminimal joint pair
\begin{align*}
    T\vu&:=\sum_{k=1}^d\partial_k(\mA_k\vu)+\mB\vu,\\
    \widetilde T\vu&:=-\sum_{k=1}^d\partial_k(\mA_k\vu)+\mB\vu,
\end{align*}
where
\[
    \vu=\begin{bmatrix}\mp\\ u\end{bmatrix},
    \qquad
    \vf=\begin{bmatrix}0\\ f\end{bmatrix}.
\]
Let $T_0=\overline{T}$ and $\widetilde T_0=\overline{\widetilde T}$. Then $(T_0,\widetilde T_0)$ is a joint pair of closed abstract Friedrichs operators; see~\cite[Section~6]{ABjde}. The first-order system \eqref{eq:SDE-system} is represented on the graph space by
\[
    T_1\vu=\vf,
\]
and its closed realisations are restrictions of the maximal operator $T_1$.
We use the standard notation in which $T_0$ denotes the minimal operator
and $T_1$ the corresponding maximal operator. The graph space, the minimal
space and the boundary form below are described in \cite[Section~6]{ABjde}:
\begin{align*}
    \lW
    &:=L^2_{\operatorname{div}}(\Omega;\C^d)\times H^1(\Omega;\C),\\
    \lW_0
    &:=L^2_{\operatorname{div},0}(\Omega;\C^d)\times H^1_0(\Omega;\C).
\end{align*}
Here
\[
    L^2_{\operatorname{div}}(\Omega;\C^d)
    :=\bigl\{\mp\in L^2(\Omega;\C^d):
    \operatorname{div}\mp\in L^2(\Omega;\C)\bigr\},
\]
and $L^2_{\operatorname{div},0}(\Omega;\C^d)$ is the closure of
$C_c^\infty(\Omega;\C^d)$ in the graph norm of
$L^2_{\operatorname{div}}(\Omega;\C^d)$.

Let
\[
    \mT_0:H^1(\Omega;\C)\to H^{1/2}(\partial\Omega;\C)
\]
be the Dirichlet trace and
\[
    \mT_{\mnu}:L^2_{\operatorname{div}}(\Omega;\C^d)
    \to H^{-1/2}(\partial\Omega;\C)
\]
be the normal trace. The Dirichlet trace theorem is given in \cite[Theorem~3.38]{McLean}, while the normal trace and its Green formula are given in \cite[Chapter~I, Theorem~2.5]{GiraultRaviart}. Set
\[
    \lX:=H^{1/2}(\partial\Omega;\C),
\]
and let
\[
    R:H^{-1/2}(\partial\Omega;\C)\to\lX
\]
be the Riesz map, which is a unitary isomorphism, characterised by
\[
    \scp{R\eta}{\xi}_{\lX}
    =\dup{-1/2}{\eta}{\xi}{1/2},
    \qquad
    \eta\in H^{-1/2}(\partial\Omega;\C),
    \quad \xi\in\lX.
\]
For
\[
    \vu=\begin{pmatrix}\mp\\u\end{pmatrix},
    \qquad
    \vv=\begin{pmatrix}\mq\\v\end{pmatrix}
    \in\lW,
\]
the boundary form is
\begin{equation}\label{eq:SDE-boundary}
    \iscp{\vu}{\vv}
    =\scp{R\mT_{\mnu}\mp}{\mT_0v}_{\lX}
    +\scp{\mT_0u}{R\mT_{\mnu}\mq}_{\lX}.
\end{equation}
Equivalently, if $D\in\mathcal L(\lW;\lW')$ is the boundary operator, then
\[
    \dup{\lW'}{D\vu}{\vv}{\lW}=\iscp{\vu}{\vv}.
\]

\medskip
\noindent\textbf{Boundary quadruple.}
Let $\lK_1=\lK_0=\lX$ and define
\begin{equation}\label{eq:SDE-Gamma}
    \Gamma_1\begin{pmatrix}\mp\\u\end{pmatrix}
    :=\frac12\mT_0u+R\mT_{\mnu}\mp,
    \qquad
    \Gamma_0\begin{pmatrix}\mp\\u\end{pmatrix}
    :=\frac12\mT_0u-R\mT_{\mnu}\mp.
\end{equation}
Then $(\lK_1,\lK_0,\Gamma_1,\Gamma_0)$ is a boundary quadruple. Indeed,
expansion gives
\[
    \iscp{\vu}{\vv}
    =\scp{\Gamma_1\vu}{\Gamma_1\vv}_{\lX}
     -\scp{\Gamma_0\vu}{\Gamma_0\vv}_{\lX},
\]
which is precisely \eqref{eq:SDE-boundary}. Moreover, the Dirichlet and
normal trace maps are surjective on bounded Lipschitz domains. Since the
linear transformation
\[
    (d,n)\longmapsto\left(\frac12d+n,\frac12d-n\right)
\]
is invertible on $\lX\oplus\lX$, the map
$\Gamma=(\Gamma_1,\Gamma_0)^\top:\lW\to\lX\oplus\lX$ is surjective.

\medskip
\noindent\textbf{The kernel of $T_1$ and the reference operator $Q_0$.}
Following \cite[Example~5.7]{BES25}, the kernel of the maximal operator is
\begin{equation}\label{eq:SDE-ker-T1}
    \ker T_1
    =\left\{
       \begin{pmatrix}\mp\\u\end{pmatrix}\in\lW:
       \mp=-\nabla u,\quad u=-\operatorname{div}\mp
      \right\}.
\end{equation}
Equivalently,
\[
    \begin{pmatrix}\mp\\u\end{pmatrix}\in\ker T_1
    \quad\Longleftrightarrow\quad
    -\Delta u+u=0\ \text{in }\Omega,
    \qquad \mp=-\nabla u.
\]

Define
\[
    \tau_D:\ker T_1\to\lX,
    \qquad
    \tau_D\begin{pmatrix}\mp\\u\end{pmatrix}:=\mT_0u.
\]
For $g\in\lX$, let $w_g\in H^1(\Omega;\C)$ be the unique weak solution of
\begin{equation}\label{eq:SDE-Dirichlet-problem}
    -\Delta w_g+w_g=0\quad\text{in }\Omega,
    \qquad \mT_0w_g=g,
\end{equation}
and set
\begin{equation}\label{eq:SDE-ED}
    E_Dg:=\begin{pmatrix}-\nabla w_g\\w_g\end{pmatrix}.
\end{equation}
The existence of a bounded right inverse of the Dirichlet trace follows
from \cite[Theorem~3.38]{McLean}, and the existence, uniqueness and
continuous dependence of $w_g$ then follow from the Lax--Milgram theorem
\cite[Corollary~5.8]{Brezis}. Hence $E_D:\lX\to\ker T_1$ is bounded and
\[
    \tau_DE_D=\I_{\lX},
    \qquad
    E_D\tau_D=\I_{\ker T_1}.
\]
Thus $\tau_D$ is a topological isomorphism from $\ker T_1$ onto $\lX$,
with inverse $E_D$.

Let
\begin{equation}\label{eq:SDE-DtN}
    \Lambda:\lX\to H^{-1/2}(\partial\Omega;\C),
    \qquad
    \Lambda g:=\mT_{\mnu}(\nabla w_g),
\end{equation}
be the Dirichlet-to-Neumann operator for $-\Delta+1$, and set
\begin{equation}\label{eq:SDE-A}
    \mathcal A:=R\Lambda\in\mathcal L(\lX).
\end{equation}
The boundedness of $\mathcal A$ follows from the boundedness of $E_D$ and
of the normal trace. Since the vector component of $E_Dg$ is
$-\nabla w_g$, we have
\[
    R\mT_{\mnu}(-\nabla w_g)=-\mathcal Ag.
\]
Therefore, by \eqref{eq:SDE-Gamma},
\begin{align}
    \Gamma_0E_Dg
    &=\left(\frac12\I_{\lX}+\mathcal A\right)g,
    \label{eq:SDE-Gamma0-ED}\\
    \Gamma_1E_Dg
    &=\left(\frac12\I_{\lX}-\mathcal A\right)g.
    \label{eq:SDE-Gamma1-ED}
\end{align}

Set
\[
    B_0:=\Gamma_0|_{\ker T_1}:\ker T_1\to\lX.
\]
By Proposition~4.3, $B_0$ is a topological isomorphism and the reference
operator
\[
    Q_0:=\Gamma_1B_0^{-1}
\]
is a strict contraction. On the other hand,
\eqref{eq:SDE-Gamma0-ED} gives
\begin{equation}\label{eq:SDE-B0-ED}
    B_0E_D=\frac12\I_{\lX}+\mathcal A.
\end{equation}
Since $B_0$ and $E_D$ are topological isomorphisms, it follows directly that
$\frac12\I_{\lX}+\mathcal A$ is a topological isomorphism and
\begin{equation}\label{eq:SDE-B0-inverse}
    B_0^{-1}
    =E_D\left(\frac12\I_{\lX}+\mathcal A\right)^{-1}.
\end{equation}
Consequently,
\begin{align}
    Q_0
    &=\Gamma_1B_0^{-1}\notag\\
    &=\Gamma_1E_D
      \left(\frac12\I_{\lX}+\mathcal A\right)^{-1}\notag\\
    &=\left(\frac12\I_{\lX}-\mathcal A\right)
      \left(\frac12\I_{\lX}+\mathcal A\right)^{-1}.
    \label{eq:SDE-Q0-DtN}
\end{align}
Thus the concrete formula for $Q_0$ follows solely by identifying the two
boundary components on $\ker T_1$. Moreover, Proposition~4.3 gives
\begin{equation}\label{eq:SDE-Q0-strict}
    \|Q_0\|_{\mathcal L(\lX)}<1.
\end{equation}
No additional spectral property of the Dirichlet-to-Neumann operator is
needed.

For $\phi\in\mathcal L(\lX)$, define
$T_\phi:=T_1|_{\dom T_\phi}$, where
\begin{equation}\label{eq:SDE-domain}
    \dom T_\phi
    :=\left\{
        \begin{pmatrix}\mp\\u\end{pmatrix}\in\lW:
        \phi\left(\frac12\mT_0u+R\mT_{\mnu}\mp\right)
        =\frac12\mT_0u-R\mT_{\mnu}\mp
      \right\}.
\end{equation}
Theorems~4.4 and~4.7 yield:
\begin{itemize}
    \item $T_\phi$ is bijective if and only if
    $\I_{\lX}-\phi Q_0$ is bijective;

    \item $(\dom T_\phi,\dom T_\phi^*)$ satisfies the \emph{(V)}-boundary
    conditions if and only if $\phi$ is non-expansive;

    \item $\dom T_\phi=\dom T_\phi^*$ if and only if $\phi$ is unitary.
\end{itemize}
In particular, if $\phi$ is non-expansive, then
\[
    \|\phi Q_0\|\leq\|Q_0\|<1,
\]
so $\I_{\lX}-\phi Q_0$ is bijective by the Neumann series. Hence every
non-expansive parameter gives a bijective realisation.

The standard boundary conditions are recovered as follows:
\begin{itemize}
    \item $\phi=\I_{\lX}$ gives $\mT_{\mnu}\mp=0$, the Neumann condition;

    \item $\phi=-\I_{\lX}$ gives $\mT_0u=0$, the homogeneous Dirichlet
    condition;

    \item if $\rho\in(-1,1)$ and $\phi=\rho\I_{\lX}$, then
    \[
        R\mT_{\mnu}\mp
        =\kappa_\rho\mT_0u,
        \qquad
        \kappa_\rho:=\frac{1-\rho}{2(1+\rho)}>0.
    \]
    This is a Robin-type condition in the Riesz boundary coordinates.
    The endpoint values $\rho=-1$ and $\rho=1$ give the Dirichlet and
    Neumann conditions, respectively.
\end{itemize}

\medskip
\noindent\emph{Relation with the classical Robin condition and the scope of the result:}
Let
\[
    J_{\partial}:\lX\to H^{-1/2}(\partial\Omega;\C)
\]
be the canonical embedding induced by the $L^2(\partial\Omega)$ pairing,
that is,
\[
    \dup{-1/2}{J_{\partial}g}{h}{1/2}
    =\int_{\partial\Omega}g\,\overline h\,dS,
    \qquad g,h\in\lX,
\]
and set
\[
    \mathcal C:=RJ_{\partial}\in\mathcal L(\lX).
\]
For $g,h\in\lX$, the definitions of $R$ and $J_{\partial}$ give
\[
    \scp{\mathcal Cg}{h}_{\lX}
    =
    \int_{\partial\Omega}g\,\overline h\,dS.
\]
Hence $\mathcal C$ is self-adjoint and
\[
    \scp{\mathcal Cg}{g}_{\lX}
    =
    \int_{\partial\Omega}|g|^2\,dS
    \geq0.
\] The positive classical
Robin conditions considered in \cite{ABcpde,ABjde} can be written as
\begin{equation}\label{eq:SDE-classical-Robin}
    \mT_{\mnu}\mp=\beta J_{\partial}\mT_0u,
    \qquad \beta>0,
\end{equation}
while $\beta=0$ gives the Neumann condition. In the present boundary
coordinates, \eqref{eq:SDE-classical-Robin} is equivalent to
\[
    \Gamma_1\vu
    =\left(\frac12\I_{\lX}+\beta\mathcal C\right)\mT_0u,
    \qquad
    \Gamma_0\vu
    =\left(\frac12\I_{\lX}-\beta\mathcal C\right)\mT_0u.
\]
For $\beta\geq0$, the operator
\[
    D_\beta:=\frac12\I_{\lX}+\beta\mathcal C
\]
is a topological isomorphism. Indeed, since $\mathcal C$ is self-adjoint and
non-negative,
\[
    \scp{D_\beta g}{g}_{\lX}
    =\frac12\|g\|_{\lX}^2
     +\beta\scp{\mathcal Cg}{g}_{\lX}
    \geq\frac12\|g\|_{\lX}^2.
\]
Hence $\|D_\beta g\|_{\lX}\geq\frac12\|g\|_{\lX}$, so $D_\beta$ is
injective and has closed range. Since $D_\beta$ is self-adjoint,
\[
    (\ran D_\beta)^\perp=\ker D_\beta=\{0\}.
\]
Thus its range is also dense and consequently equals $\lX$. Hence the
classical Robin domain is represented by
\begin{equation}\label{eq:SDE-classical-Robin-parameter}
    \Gamma_0\vu=\phi_\beta\Gamma_1\vu,
    \qquad
    \phi_\beta
    :=\left(\frac12\I_{\lX}-\beta\mathcal C\right)
      \left(\frac12\I_{\lX}+\beta\mathcal C\right)^{-1}.
\end{equation}
Moreover, if
$y=(\frac12\I_{\lX}+\beta\mathcal C)g$, then
\[
    \|y\|_{\lX}^2-\|\phi_\beta y\|_{\lX}^2
    =2\beta\scp{\mathcal Cg}{g}_{\lX}\geq0.
\]
Thus $\phi_\beta$ is non-expansive for $\beta\geq0$. Consequently, the
positive Robin conditions treated in \cite{ABcpde,ABjde} are recovered
from the non-expansive part of the present theory. Notice that the scalar
choice $\phi=\rho\I_{\lX}$ gives a Robin-type condition in the Riesz
boundary coordinates; it is not, in general, the same as the classical
scalar Robin condition \eqref{eq:SDE-classical-Robin} because
$R^{-1}$ and $J_{\partial}$ are different boundary maps.

The exact criterion also gives the corresponding solvability condition in
Dirichlet-to-Neumann coordinates. Put
\[
    D_\beta:=\frac12\I_{\lX}+\beta\mathcal C,
    \qquad
    N_\beta:=\frac12\I_{\lX}-\beta\mathcal C.
\]
Whenever $D_\beta$ is bijective, $\phi_\beta=N_\beta D_\beta^{-1}$, and a
direct calculation using \eqref{eq:SDE-Q0-DtN} gives
\begin{equation}\label{eq:SDE-classical-Robin-factor}
    D_\beta
    \left(\I_{\lX}-\phi_\beta Q_0\right)
    \left(\frac12\I_{\lX}+\mathcal A\right)
    =\mathcal A+\beta\mathcal C.
\end{equation}
No commutativity between $\mathcal A$ and $\mathcal C$ is used in this
identity. Since the two outer factors on the left are bijective,
Theorem~4.4 yields
\begin{equation}\label{eq:SDE-classical-Robin-bijective}
    T_{\phi_\beta}\text{ is bijective}
    \quad\Longleftrightarrow\quad
    \mathcal A+\beta\mathcal C\text{ is bijective}.
\end{equation}
For $\beta\geq0$, this bijectivity is already guaranteed by the
non-expansiveness of $\phi_\beta$. For $\beta<0$, however, $D_\beta$ may
fail to be bijective. In that case the Robin condition is still represented
by the closed boundary relation
\begin{equation}\label{eq:SDE-classical-Robin-relation}
    \Theta_\beta
    :=\left\{
       \begin{pmatrix}D_\beta d\\ N_\beta d\end{pmatrix}:
       d\in\lX
      \right\}\subseteq\lX\oplus\lX,
\end{equation}
This relation is closed. Indeed, $D_\beta+N_\beta=\I_{\lX}$. Hence, if
\[
    (D_\beta d_n,N_\beta d_n)\longrightarrow(\xi_1,\xi_0)
    \quad\text{in }\lX\oplus\lX,
\]
then
\[
    d_n=D_\beta d_n+N_\beta d_n
    \longrightarrow\xi_1+\xi_0.
\]
By continuity of $D_\beta$ and $N_\beta$, it follows that
\[
    (\xi_1,\xi_0)
    =\bigl(D_\beta(\xi_1+\xi_0),N_\beta(\xi_1+\xi_0)\bigr)
    \in\Theta_\beta.
\]
Therefore, the relation criterion of Theorem~4.1 applies directly.

This identifies precisely the additional conclusion of the present
framework. The positive classical Robin conditions from
\cite{ABcpde,ABjde} form a non-expansive special case. The new information
is the exact bijectivity criterion in the prescribed boundary coordinates
for arbitrary bounded graph parameters, including expansive and
operator-valued parameters, together with the relation criterion when the
boundary condition is not the graph of an everywhere-defined operator.
The novelty is therefore not the introduction of operator-valued Robin
conditions themselves, but their treatment within the boundary-quadruple
classification of abstract Friedrichs operators by the single transfer
operator $Q_0$.

For completeness, the scalar Riesz-coordinate family may be written
explicitly. Let $\phi=\rho\I_{\lX}$ with $\rho\in\R$. From
\eqref{eq:SDE-Q0-DtN},
\begin{align}
    \I_{\lX}-\rho Q_0
    &=\left[
        \frac{1-\rho}{2}\I_{\lX}
        +(1+\rho)\mathcal A
      \right]
      \left(\frac12\I_{\lX}+\mathcal A\right)^{-1}.
    \label{eq:SDE-scalar-factor}
\end{align}
Since the second factor is bijective, Theorem~4.4 gives
\begin{equation}\label{eq:SDE-scalar-bijective}
    T_{\rho\I_{\lX}}\text{ is bijective}
    \quad\Longleftrightarrow\quad
    \frac{1-\rho}{2}\I_{\lX}
    +(1+\rho)\mathcal A
    \text{ is bijective}.
\end{equation}
For $\rho\neq-1$, this is equivalently
\begin{equation}\label{eq:SDE-Robin-bijective}
    T_{\rho\I_{\lX}}\text{ is bijective}
    \quad\Longleftrightarrow\quad
    \mathcal A+\kappa_\rho\I_{\lX}
    \text{ is bijective},
    \qquad
    \kappa_\rho=\frac{1-\rho}{2(1+\rho)}.
\end{equation}
When $\rho\notin[-1,1]$, one has $\kappa_\rho<0$ and
$\rho\I_{\lX}$ is expansive, so bijectivity is no longer automatic from
the {(V)}-boundary theory. Formula
\eqref{eq:SDE-Robin-bijective} gives the exact criterion in that case.

\medskip
\noindent\textbf{Boundary triple.}
Define
\begin{equation}\label{eq:SDE-gamma}
    \gamma_0\begin{pmatrix}\mp\\u\end{pmatrix}:=\mT_0u,
    \qquad
    \gamma_1\begin{pmatrix}\mp\\u\end{pmatrix}:=R\mT_{\mnu}\mp.
\end{equation}
Then $(\lX,\gamma_1,\gamma_0)$ is a boundary triple. Its Green identity is
exactly \eqref{eq:SDE-boundary}, and the map
$(\gamma_1,\gamma_0)^\top$ is surjective because the normal and Dirichlet
trace variables arise from the two independent components of $\lW$.

\medskip
\noindent\textbf{{(M)}-boundary conditions.}
Let $\phi,\psi\in\mathcal L(\lX)$ be non-expansive and suppose that
$\I_{\lX}-\psi\phi$ is bijective. Set
\begin{equation}\label{eq:SDE-Cpsi}
    C_{\phi,\psi}
    :=
    \begin{pmatrix}
        \I_{\lX} & -\psi\\
        -\phi & \I_{\lX}
    \end{pmatrix}
    \begin{pmatrix}
        \I_{\lX} & \psi\\
        \phi & \I_{\lX}
    \end{pmatrix}^{-1}.
\end{equation}
The corresponding $M$-operator is characterised by
\begin{equation}\label{eq:SDE-M-general}
    \dup{\lW'}{M_{\phi,\psi}\vu}{\vv}{\lW}
    =\scp{C_{\phi,\psi}\Gamma\vu}{\Gamma\vv}_{\lX\oplus\lX},
    \qquad \vu,\vv\in\lW,
\end{equation}
where $\Gamma=(\Gamma_1,\Gamma_0)^\top$.

For the Neumann parameter $\phi=\I_{\lX}$, choose
$\psi=-\I_{\lX}$. Then
\[
    C_{\I,-\I}
    =\begin{pmatrix}
        0 & \I_{\lX}\\
        -\I_{\lX} & 0
      \end{pmatrix}.
\]
Writing
\[
    d_u:=\mT_0u,
    \qquad n_{\mp}:=R\mT_{\mnu}\mp,
\]
and analogously $d_v$ and $n_{\mq}$, we obtain
\begin{equation}\label{eq:SDE-M-neumann}
    \dup{\lW'}{M_{\I,-\I}\vu}{\vv}{\lW}
    =-\scp{n_{\mp}}{d_v}_{\lX}
     +\scp{d_u}{n_{\mq}}_{\lX}.
\end{equation}
This agrees with the boundary-operator construction used for the Neumann boundary condition in \cite[Section~6]{ABjde}. 

For the same Neumann parameter $\phi=\I_{\lX}$ and the choice $\psi=0$,
one obtains
\begin{equation}\label{eq:SDE-M-zero}
    \dup{\lW'}{M_{\I,0}\vu}{\vv}{\lW}
    =4\scp{n_{\mp}}{n_{\mq}}_{\lX}
     +\scp{d_u}{n_{\mq}}_{\lX}
     -\scp{n_{\mp}}{d_v}_{\lX}.
\end{equation}
In particular,
\[
    \Re\dup{\lW'}{M_{\I,0}\vu}{\vu}{\lW}
    =4\|n_{\mp}\|_{\lX}^2\geq0.
\]
Both operators correspond to the same Neumann {(V)}-domain:
\[
\ker(D-M_{\I,-\I})=\ker(D-M_{\I,0})
=\left\{\begin{pmatrix}\mp\\u\end{pmatrix}\in\lW:\mT_{\mnu}\mp=0\right\}.
\]
Thus the example exhibits both the computability of $Q_0$ and the multiplicity of $M$-operators associated with a fixed {(V)}-boundary condition.

\end{example}

\section{Concluding Remarks}
We have developed a boundary quadruple description of bijective realisations of abstract Friedrichs operators. The relation-level criterion
\[
    \lK=\Theta\dotplus\Gamma(\ker T_1)
\]
identifies the intrinsic geometric obstruction to bijectivity. For bounded graph parameters this obstruction is represented by the strict contraction
\[
    Q_0=\Gamma_1\bigl(\Gamma_0|_{\ker T_1}\bigr)^{-1},
\]
and the exact criterion becomes the invertibility of
$\I_{\lK_0}-\phi Q_0$. The canonical von Neumann parametrisation is recovered when $Q_0=0$, while arbitrary prescribed boundary quadruples generally require the additional factor.

The (V)-boundary conditions are parametrised by the non-expansive parameters and the self-orthogonal case by unitary parameters. For a fixed (V)-condition, compatible negative boundary spaces are characterised by the bijectivity of $\I_{\lK_1}-\psi\phi$. Choosing the orthogonal complement of the minimal space produces a canonical family of associated \emph{(M)}-operators. The ODE example computes the transfer operator explicitly, and the diffusion example identifies it with a Cayley transform of the Dirichlet-to-Neumann operator.

A natural continuation is to replace the distinguished point $0$ by a spectral parameter and to study a family $Q(\lambda)$. Such a construction should lead to Kre\u{\i}n-type resolvent formulas and to spectral criteria expressed through $\I-\phi Q(\lambda)$, in analogy with Weyl-function methods for symmetric operators and adjoint pairs.

\section*{Acknowledgements}
This work is supported by the Austrian Science Fund (FWF) through the ESPRIT Programme [Grant DOI: 10.55776/ESP1299024].

\end{document}